\newcommand{\subjclass}[2][2020]{%
  \let\@oldtitle\@title%
  \gdef\@title{\@oldtitle\footnotetext{#1 \emph{Mathematics subject classification.} #2}}%
}
\newcommand{\C}{\mathbb{C}}
\newcommand{\R}{\mathbb{R}}
\newcommand{\Q}{\mathbb{Q}}
\newcommand{\Z}{\mathbb{Z}}
\newcommand{\tr}{\text{tr}}
\newcommand{\sllie}{\mathfrak{sl}}
\newcommand{\dbar}{\overline{\partial}}
\newcommand{\git}{\mathbin{
  \mathchoice{/\mkern-6mu/}
    {/\mkern-6mu/}
    {/\mkern-5mu/}
    {/\mkern-5mu/}}}
\newtheorem{theorem}{Theorem}[section]
\newtheorem{proposition}[theorem]{Proposition}
\newtheorem{corollary}[theorem]{Corollary}
\newtheorem{lemma}[theorem]{Lemma}
\newtheorem*{lemma*}{Lemma}
\newtheorem{assumption}{Assumption}
\theoremstyle{definition}
\newtheorem{remark}[theorem]{Remark}
\numberwithin{equation}{section}
\begin{document}

\title{Singular Lagrangians in the Hitchin moduli space and conformal limits} 
\subjclass{Primary: 58D27. Secondary: 14D20, 14D21, 32G13}
\author{Sze-Hong Kwong\thanks{\footnotesize Department of Mathematics, University of Maryland, College Park, MD 20742, USA \href{mailto:@umd.edu}{\texttt{shkwong@umd.edu}}. }}

\maketitle

\begin{abstract}
In the moduli space $\mathcal{M}_{H}$ of semistable $\text{SL}(r,\C)$-Higgs bundles, we show that there exists a sublocus $S$ of the upward flow through a polystable $\C^{*}$-fixed point $[(\dbar_0,\Phi_0)]$, which is Lagrangian on its intersection with the stable locus. When $\Phi_0=0$ or when the automorphism group of $(\dbar_0,\Phi_0)$ is abelian, we demonstrate that the intersection of $S$ with the stable locus is always non-empty, and establish the existence of conformal limits for stable Higgs bundles lying on $S$.
\end{abstract}

\tableofcontents

\setcounter{section}{0}


\section{Introduction}
Over a fixed a closed Riemann surface of genus greater than one, a homeomorphism between the moduli space of degree zero polystable Higgs bundles and completely reducible flat connections, denoted by $\mathcal{M}_{0}$ and $\mathcal{M}_1$ respectively, is given by the celebrated nonabelian Hodge correspondence (NAH). In addition, identifying flat connections with holomorphic connections, the two singular complex spaces both admit a natural stratification coming from the Bia\l ynicki-Birula type decomposition on the moduli space of $\lambda$-connections $\mathcal{M}$, which is associated with the $\C^{*}$-action equivariant with the fibration $\pi: \mathcal{M}\to \C$, where one makes the identification $\mathcal{M}_{0}\cong \pi^{-1}(0)$ and $\mathcal{M}_{1}\cong \pi^{-1}(1)$ (see \cite{Sim10}). Restriction of this $\C^{*}$-action to $\mathcal{M}_0$ is the standard $\C^{*}$-action $t\cdot [(\mathcal{E},\Phi)]\mapsto [(\mathcal{E},t\Phi)]$. Each stratum in $\mathcal{M}$ is fibred over a connected component of the locus of $\C^{*}$-fixed points. Now, given a $\C^{*}$-fixed point $[(\mathcal{E}_0,\Phi_0)]$, let $W_0$ and $W_1$ be the restriction of the fibre to $\mathcal{M}_{0}$ and $\mathcal{M}_{1}$ respectively. The fibre $W_0$ is known as the \emph{upward flow} through $[(\mathcal{E}_0,\Phi_0)]$. It is an interesting object to study, especially when it is very stable, in which case $W_0$ is closed and plays a role in mirror symmetry (cf. \cite{HH22}).

When the Higgs pair $(\mathcal{E}_0,\Phi_0)$ is stable, much is understood about these restricted fibres $W_0$ and $W_1$. For instance, $W_{0}$ is contained in $\mathcal{M}^{s}_{0}$, the locus of stable Higgs bundles. The upward flow $W_0$ is locally closed complex Lagrangian and is $\C^{*}$-equivariantly isomorphic to the direct sum of positively graded subspaces $T^{+}_{[(\mathcal{E}_0,\Phi_0)]}\mathcal{M}_0\subset T_{[(\mathcal{E}_0,\Phi_0)]}\mathcal{M}_0$. In the analytic setting, one may replace $T^{+}_{[(\mathcal{E}_0,\Phi_0)]}\mathcal{M}_0$ with its harmonic representative $\mathcal{H}^{1}_{+}$. Through the Kuranishi map, a global parametrization $\mathcal{H}^{1}_{+}\to W_0$ is obtained in \cite{CW19}, which can be interpolated to give a parametrization to $W_1$ as well. Composing these two parametrizations then gives a correspondence from $W_0$ to $W_1$, which turns out to coincide with the mapping given by \emph{conformal limit} (see \cite{DFK+21} and \cite{CW19}). More detail about conformal limits will be given below.

The purpose of this article is to take the first step into the study of the singular cases, i.e. when $[(\mathcal{E}_0,\Phi_0)]$ is a singular point in $\mathcal{M}_0$, which happens when $(\mathcal{E}_0,\Phi_0)$ is strictly polystable. We will focus on the Dolbeault side. The main results will be proved under either one of the following two assumptions on the polystable representative $(\dbar_0,\Phi_0)$:

\begin{assumption}\label{assumption_I}
    $\Phi_0 =0$.
\end{assumption}

\begin{assumption}\label{assumption_II}
The automorphism group of $(\mathcal{E}_0,\Phi_0)$ is abelian.
\end{assumption}

Note that Assumption \ref{assumption_I} amounts to a saying that $\mathcal{E}_0$ is a polystable holomorphic vector bundle, while Assumption \ref{assumption_II} is equivalent to the statement that $(\mathcal{E}_0,\Phi_0)$ is a direct sum of pairwise non-isomorphic stable Higgs bundles of the degree zero. When the rank of the underlying vector bundle is at most three, at least one of the two assumptions are satisfied (cf. Proposition \ref{proposition_shb_block} below).

The structure of the upward flow through a singular point $[(\dbar_0,\Phi_0)]$ could be complicated, as opposed to the stable case, where the upward flow is parametrized by an affine space. However, when set up appropriately, the recipe for constructing this affine space parametrization will yield an interesting sublocus of the full upward flow.

\begin{theorem}\label{theorem_lagrangian}
Along the upward flow through $[(\dbar_0,\Phi_0)]$, there exists a sub-locus $$S=S([(\dbar_0,\Phi_0)])$$
parametrized by an affine GIT quotient, which is complex Lagrangian along the points which represent stable Higgs bundles. When Assumption \ref{assumption_I} or Assumption \ref{assumption_II} holds, $S$ contains some stable Higgs bundle.
\end{theorem}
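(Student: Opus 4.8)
\emph{Proof idea.} The plan is to read $S$ off the $\C^{*}$-equivariant Kuranishi model at the fixed point, to extract the Lagrangian property from the fact that the natural holomorphic symplectic form is homogeneous of weight one, and to settle non-emptiness of the stable locus by analysing the residual automorphisms acting on the positive-weight slice. First I would encode the fixed point as linear data: the $\C^{*}$-action is implemented by a cocharacter $\lambda\colon \C^{*}\to \mathrm{Aut}(\mathcal E_{0})$ with $\lambda(t)\,\Phi_{0}\,\lambda(t)^{-1}=t\,\Phi_{0}$, inducing a weight grading $\mathbb{H}^{\bullet}=\bigoplus_{n}\mathbb{H}^{\bullet}_{n}$ on the hypercohomology of the deformation complex $\text{End}(\mathcal E_{0})\xrightarrow{\ad\Phi_{0}}\text{End}(\mathcal E_{0})\otimes K$. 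Polystability makes $G=\mathrm{Aut}(\mathcal E_{0},\Phi_{0})$ reductive, $\lambda$ normalises $G$, and a neighbourhood of $[(\dbar_{0},\Phi_{0})]$ in $\mathcal M_{0}$ is modelled $G$-equivariantly by $\kappa^{-1}(0)\git G$ for the Kuranishi map $\kappa\colon\mathbb{H}^{1}\to\mathbb{H}^{2}$. I would then set $\mathcal V_{+}=\kappa^{-1}(0)\cap\mathbb{H}^{1}_{>0}$, the attracting directions, and define $S$ to be the image of $\mathcal V_{+}$ in $\mathcal M_{0}$. Writing $P=\{g\in G:\lim_{t\to 0}\lambda(t)g\lambda(t)^{-1}\text{ exists}\}=L\ltimes U$ with Levi $L=Z_{G}(\lambda)$, the subspace $\mathbb{H}^{1}_{>0}$ is $P$-stable, so the fibres of $\mathcal V_{+}\to S$ are $P$-orbits; slicing the unipotent radical $U$ then presents $S$ as an affine GIT quotient $N\git L$ of an $L$-invariant affine slice $N\subseteq\mathcal V_{+}$. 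This is manifestly only a sub-locus of the full upward flow, the rest of which arises from weight-zero directions that degenerate to $[0]$ under $G$.

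Second I would prove the Lagrangian property on $S^{s}=S\cap\mathcal M^{s}_{0}$. The one computation that matters is that the Goldman/Atiyah--Bott form $\omega$ has weight $+1$ for $\lambda$, since the Higgs direction carries weight one and the $\dbar$-direction weight zero; hence $\omega$ pairs $\mathbb{H}^{1}_{n}$ nondegenerately with $\mathbb{H}^{1}_{1-n}$. Because the involution $n\mapsto 1-n$ carries $\{n\ge 1\}$ bijectively onto $\{n\le 0\}$ with no integer fixed point, $\mathbb{H}^{1}_{>0}$ is $\omega$-isotropic and of dimension $\tfrac12\dim\mathbb{H}^{1}$. Isotropy descends to $S^{s}$ because the reduced form pulls back to $\omega$ along the slice. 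For the dimension I would note that the weight grading forces $\kappa|_{\mathbb{H}^{1}_{>0}}$ to land in the weight ${\ge}2$ part of $\mathbb{H}^{2}\cong(\mathfrak g_{<0})^{*}$, so at a stable point (where $\mathcal M_{0}$ is smooth and the restriction is submersive) $\dim\mathcal V_{+}=\tfrac12\dim\mathbb{H}^{1}-\dim\mathfrak g_{>0}$; combining with $\dim P=\dim L+\dim\mathfrak g_{>0}$ and $2\dim\mathfrak g_{>0}=\dim G-\dim L$ gives $\dim S^{s}=\tfrac12\dim\mathbb{H}^{1}-\dim G=\tfrac12\dim\mathcal M_{0}$. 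An isotropic half-dimensional locus is Lagrangian.

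Finally, for non-emptiness of $S^{s}$ I must exhibit a $v\in\mathcal V_{+}$ whose stabiliser in $G$ is finite, equivalently whose associated Higgs bundle is stable. Under Assumption~\ref{assumption_I} the cocharacter $\lambda$ is trivial, so $\mathbb{H}^{1}_{>0}=H^{0}(\text{End}(\mathcal E_{0})\otimes K)$ with vanishing Kuranishi obstruction, and $\mathcal V_{+}$ is the entire space of Higgs fields on $\mathcal E_{0}$; here I would invoke the classical fact that over a curve of genus $>1$ a polystable bundle carries a Higgs field with no invariant destabilising subbundle, produced by Riemann--Roch since the off-diagonal spaces $H^{0}(\mathrm{Hom}(\mathcal F_{i},\mathcal F_{j})\otimes K)$ are nonzero and couple all the stable constituents. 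Under Assumption~\ref{assumption_II} the group $G$ is a torus $T$, $\lambda$ is central so $P=L=T$, and $\mathbb{H}^{1}=\bigoplus_{i,j}\mathbb{H}^{1}_{ij}$ with $T$ acting on the off-diagonal block $\mathbb{H}^{1}_{ij}$ through the character $\chi_{j}-\chi_{i}$; by toric GIT it then suffices to check that the $T$-weights occurring in $\mathcal V_{+}=\kappa^{-1}(0)\cap\mathbb{H}^{1}_{>0}$ surround the origin and connect all the constituents. This I would extract from Serre duality $\mathbb{H}^{1}_{ij,n}\cong(\mathbb{H}^{1}_{ji,1-n})^{*}$ together with Riemann--Roch, which for genus $>1$ keeps both orientations $\pm(\chi_{j}-\chi_{i})$ present among the strictly positive $\lambda$-weights. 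The main obstacle is exactly this survival of enough residual weights after the positive-weight truncation: Assumption~\ref{assumption_I} removes the truncation by trivialising $\lambda$, and Assumption~\ref{assumption_II} abelianises $G$ so that the truncated weight system can be analysed explicitly, whereas the interaction of a nontrivial $\lambda$ with a nonabelian $G$ in the general case is what prevents the argument from closing.
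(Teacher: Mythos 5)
Your construction of $S$ and your proof of the Lagrangian property run essentially parallel to the paper: the positive-weight part of the Kuranishi slice, the observation that the holomorphic symplectic form has weight one (so it pairs weight $n$ with weight $1-n$ and kills $\mathbb{H}^{1}_{>0}\times\mathbb{H}^{1}_{>0}$), and the half-dimension count via duality are all exactly what the paper does (Propositions \ref{proposition_dim} and \ref{proposition_central_locus_lagrangian}). Two small remarks there: by Lemma \ref{lemma_h0_blockdiagonal} the automorphism group $G$ acts with graded degree zero on the chosen SHB structure, so your $P=L\ltimes U$ collapses to $P\cap G=G$ and $\mathfrak{g}_{>0}=0$ — your bookkeeping is not wrong, just vacuous; and $\mathbb{H}^{2}$ is concentrated in weight one, so the obstruction vanishes identically on $\mathbb{H}^{1}_{>0}$ and $\mathcal{V}_{+}$ is all of $\mathbb{H}^{1}_{>0}$, which is the content of the paper's biholomorphism $\kappa\colon S_{+}\to\mathcal{H}^{1}_{+}$.

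The genuine gap is in the non-emptiness step under Assumption \ref{assumption_II}. Your plan is to exhibit $v\in\mathcal{V}_{+}$ "whose stabiliser in $G$ is finite, equivalently whose associated Higgs bundle is stable." That "equivalently" is exactly the implication the paper does not have: the local model of \cite{Fan22} gives that a $G$-\emph{polystable} vector near the origin maps to a \emph{polystable} Higgs bundle, and Proposition \ref{proposition_stable_vector}(i) gives the converse direction (Higgs stable $\Rightarrow$ vector stable), but the paper explicitly notes that the forward implication (vector stable $\Rightarrow$ Higgs stable) "does not appear to follow immediately from the construction in \cite{Fan22}". A stable vector could a priori map to a strictly polystable Higgs bundle whose automorphism group is not conjugate into $G$; ruling this out is a Luna-slice-type statement you would have to prove. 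The paper circumvents it entirely: after producing a $G$-stable cyclic coupling $\varphi=\sum\varphi_{i}$ by Riemann--Roch and Hilbert--Mumford (which fixes $\dim(\mathcal{H}^{1}_{+}\git G)=\tfrac12\dim\mathcal{M}_{H}$), it runs an induction on $\dim G$ over partitions of the stable constituents, showing that if $S_{+}$ contained no stable Higgs bundle then $\mathcal{H}^{1}_{+}\git G$ would be a finite union of the proper closed subsets $\mathcal{H}^{1}_{+}(P)\git G$, contradicting irreducibility (Theorem \ref{theorem_non_empty}, using Proposition \ref{proposition_stable_vector}(ii) and Lemma \ref{lemma_limit_direct_sum_commute}). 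Without either that induction or a proof of the missing equivalence, your argument does not close. A secondary issue: under Assumption \ref{assumption_I} you cite as "classical" that a polystable bundle carries a stabilising Higgs field; the paper proves this, and does so by a different and cleaner route (take a small $\beta$ deforming $\mathcal{E}_{0}$ to a stable bundle, and use Kempf--Ness to transfer $G$-stability to $\varphi=\beta^{*}$), which in particular handles repeated stable factors where $G$ is nonabelian — there your "couple the constituents via nonzero off-diagonal $H^{0}$'s" sketch would need a genuinely nonabelian Hilbert--Mumford check, not just torus weights.
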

We will call $S$ the \emph{central locus} of the upward flow through $[(\dbar_0,\Phi_0)]$. Its construction will be carried out in the next section. We remark that as an immediate consequence of Theorem \ref{theorem_lagrangian}, we have the following
\begin{corollary}
If Assumption \ref{assumption_I} or Assumption \ref{assumption_II} holds, then the upward flow through $[(\dbar_0,\Phi_0)]$ contains some stable Higgs bundle.
\end{corollary}

Another motivation for the study of the central locus $S$, or more generally the upward flow $W_0$ through a $\C^{*}$-fixed point comes from the notion of conformal limits, which are defined as follows. Let $(\dbar_E,\Phi)$ be a stable Higgs bundle. Then for each $R>0$, $(\dbar_{E},R\Phi)$ is also stable. Let $h_R$ be the harmonic metric for $(\dbar_{E},R\Phi)$, and $\dbar_{E}+\partial^{h_R}$ the corresponding Chern connection. For each $\xi\in \C-\{0\}$, we have the flat connection $$\dbar_{E}+\partial^{h_R}_{E}+\xi^{-1}R\Phi+\xi R\Phi^{*_{h_R}}.$$
Fixing a constant $\hbar \in \C-\{0\}$ and letting $R=\hbar^{-1}\xi$, the limit $$\lim_{R\to 0} (\dbar_{E}+\partial^{h_R}_{E}+\hbar^{-1}\Phi+R^{2}\Phi^{*_{h_R}}),$$
if exist, is said to be the \emph{($\hbar$-)conformal limit} of $(\dbar_{E},\Phi)$. When $(\dbar_{E},\Phi)$ lies on the Hitchin component, it was conjectured by Gaiotto in \cite{Gai14}, that its conformal limit exists and lies in the space of oper. The origin of this conjecture comes from superconformal field theory of class $\mathcal{S}$ which arise from compactification on the product of a Riemann surface and a circle of radius $R$. The non-punctured case was later proved in \cite{DFK+21}, whose methodology was then generalized in \cite{CW19} to show the existence of the conformal limit for $(\dbar_{E},\Phi)$ lying on the global parametrization of the upward flow through a stable $\C^{*}$-fixed point. The case for parabolic Higgs bundles is treated in \cite{CFW24}. One major obstacle absent from the stable case is the presence of non-trivial automorphism group, and hence the non-uniqueness of harmonic metrics for the $\C^{*}$-fixed point $(\dbar_0,\Phi_0)$, where $\lim_{t\to 0}[(\dbar_{E},t\Phi)]=[(\dbar_0,\Phi_0)]$. In the last section, we will prove the following

\begin{theorem}\label{theorem_CL}
Suppose either Assumption \ref{assumption_I} or Assumption \ref{assumption_II} holds for $(\dbar_0,\Phi_0)$. Let $(\dbar_{E},\Phi)$ be a stable Higgs bundle such that $[(\dbar_E,\Phi)]\in S([(\dbar_0,\Phi_0)])$. Then the conformal limit of $(\dbar_{E},\Phi)$ exists.
\end{theorem}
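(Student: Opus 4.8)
The plan is to establish existence of the conformal limit for stable Higgs bundles on the central locus $S$ by reducing to the stable-case methodology of \cite{CW19} and \cite{DFK+21}, while overcoming the difficulty introduced by the non-trivial automorphism group of the fixed point $(\dbar_0,\Phi_0)$. First I would recall the scaling structure: since $[(\dbar_E,\Phi)]\in S$ lies on the upward flow through $[(\dbar_0,\Phi_0)]$, we have $\lim_{t\to 0}[(\dbar_E,t\Phi)]=[(\dbar_0,\Phi_0)]$, and the central locus is (by Theorem \ref{theorem_lagrangian}) parametrized via the Kuranishi/GIT construction so that the positively graded deformation directions are controlled. The conformal limit is governed by the family of flat connections $\dbar_E+\partial^{h_R}_E+\hbar^{-1}\Phi+R^2\Phi^{*_{h_R}}$, so the entire problem reduces to understanding the behavior of the harmonic metrics $h_R$ as $R\to 0$. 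The key analytic input is a uniform control on $h_R$ relative to a reference metric adapted to the fixed point, which then allows one to extract a limiting connection.

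The main steps I would carry out are as follows. First, fix a harmonic metric $h_0$ for the polystable fixed point $(\dbar_0,\Phi_0)$; because the automorphism group is non-trivial, $h_0$ is not unique, so I would make a canonical choice compatible with the $\C^*$-grading (e.g. a diagonal metric respecting the decomposition into stable summands under Assumption \ref{assumption_II}, or the Hermitian--Einstein metric on the polystable bundle under Assumption \ref{assumption_I}). Second, I would construct an explicit approximate solution: using the affine parametrization of $S$, write $(\dbar_E,\Phi)$ as a deformation of $(\dbar_0,\Phi_0)$ in positively graded directions, and build a family of approximate harmonic metrics $h_R^{\mathrm{app}}$ by conjugating $h_0$ with the $\C^*$-gauge $t=R$ and incorporating the deformation. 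Third, I would show that the error in the Hitchin equation for $h_R^{\mathrm{app}}$ decays as $R\to 0$, then invoke an implicit-function/contraction-mapping argument (as in \cite{CW19}) to correct $h_R^{\mathrm{app}}$ to the genuine harmonic metric $h_R$, obtaining uniform estimates. Finally, I would substitute these estimates into the flat connection family and take the limit $R\to 0$ with $R=\hbar^{-1}\xi$, showing the gauge-transformed connections converge.

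The crucial point where the singular case diverges from the stable case is the gauge-fixing needed to kill the automorphisms. Since $\mathrm{Aut}(\dbar_0,\Phi_0)$ is positive-dimensional, the harmonic metric $h_R$ is only well-defined up to this automorphism group, and naively the limit need not exist as an honest connection but only as a point in the moduli space. The remedy is that the central locus was constructed precisely via the GIT quotient by this automorphism group (Theorem \ref{theorem_lagrangian}), so the residual gauge freedom is exactly the stabilizer data quotiented out in the definition of $S$. I would therefore track the limit inside the slice transverse to the automorphism orbit: under Assumption \ref{assumption_II} the abelian automorphism group acts by rescaling the stable summands, and a limit can be pinned down by normalizing the block-diagonal part of $h_R$; under Assumption \ref{assumption_I} the automorphisms are absorbed into the choice of Hermitian--Einstein metric, which is unique up to the residual torus, handled identically.

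I expect the main obstacle to be precisely the uniform estimate on $h_R$ as $R\to 0$ in the presence of the non-trivial stabilizer. In the stable case, the harmonic metric varies real-analytically and the key estimates follow from a spectral gap for the relevant Laplacian; here, the automorphisms produce a kernel (the infinitesimal automorphisms sit in the zero-weight space and the negatively graded deformations are obstructed), so the linearized operator degenerates. The way around this is to work modulo the automorphism group throughout: restrict the analysis to the positively graded slice, where the central locus lives by construction, and verify that on this slice the linearized Hitchin operator is uniformly invertible with bounds independent of $R$. Establishing this uniform invertibility — essentially a coercivity estimate for the Hitchin operator restricted to the positive-weight directions, uniform as the fixed point is approached — is the technical heart of the argument, and is where the two assumptions genuinely enter to guarantee that the obstruction space and the automorphism-induced kernel are under control.
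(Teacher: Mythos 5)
There is a genuine gap in the mechanism you propose for handling the kernel coming from the automorphisms. You correctly identify that the linearized operator degenerates on $H_0=\mathcal{H}^0\cap\{v^*=v\}$, but your remedy --- ``restrict the analysis to the positively graded slice and verify that the linearized Hitchin operator is uniformly invertible there'' --- does not close the argument. The equation $N(f,R)=0$ that must be solved to correct the approximate metric has components pairing nontrivially with $H_0$; these do not vanish automatically and cannot be discarded by working in a transverse slice, since the conformal limit is a limit of honest connections (for $R>0$ the harmonic metric of the stable bundle $(\dbar_E,R\Phi)$ is unique, so there is no residual gauge freedom to quotient by at positive $R$). The paper's resolution is not a coercivity estimate on the positive-weight directions but a specific choice of the reference harmonic metric of the fixed point: one minimizes the Kempf--Ness functional $\ell_\varphi(g)=\|e^g\varphi e^{-g}\|^2$ over the $G$-orbit of metrics (Lemma \ref{lemma_local_minima}). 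At that minimizer the first variation kills the $H_0^\vee$-component of the equation at $R=0$ and the Hessian is non-degenerate, which is exactly what allows a separate application of the implicit function theorem to solve the $H_0^\vee$-component (Proposition \ref{proposition_open_stratum_perp_image}); the orthogonal complement $H_1$ is then handled by the standard elliptic argument. Your proposal of taking ``the Hermitian--Einstein metric'' under Assumption \ref{assumption_I} does not determine a metric --- it is unique only up to the positive-dimensional $G$ --- and the correct representative genuinely depends on the deformation $\varphi$, not only on the fixed point.

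Under Assumption \ref{assumption_II} the gap is wider. Conjugating by the $\C^*$-gauge $t=R$ alone, as you suggest, is not the right family of model metrics: the paper must build a one-parameter subgroup $(x,\sigma)$ of $G\times\C^*$ via the iterative weight-polytope procedure of Proposition \ref{proposition_iterative_algorithm} (successively extracting faces of convex hulls of effective weights, Kempf--Ness minimizing the corresponding projections $P_{S_i}(u)$, and descending through a chain of stabilizer tori $G=G_0\supset\cdots\supset G_k=\{1\}$), and then sets $h'_R=(x(R),\sigma(R))^*h$. This is needed because a single Kempf--Ness minimization leaves a positive-dimensional residual stabilizer, and the $H_0^\vee$-component of the equation must be solved by a cascade of implicit function theorem applications, one per torus in the chain, each normalized by $R^{-2d_{j-1}}$ (Theorem \ref{theorem_n_functional_perp}). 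None of this structure is visible in your outline, and without it the perturbation argument cannot be completed.
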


A few remarks are in order. For a strictly polystable $(\dbar_{E},\Phi)\in S$, one can decompose $(\dbar_{E},\Phi)$ into a direct sum of stable Higgs bundles and establish the existence of conformal limits of these stable factors. In addition, one may also consider a polystable Higgs pair $(\dbar_{E},\Phi)$ where $[(\dbar_{E},\Phi)]$ does not lie in the central locus $S$ through $[(\dbar_0,\Phi_0)]$. The reason we do not consider this case in this paper is that we have not constructed a convenient model for the full upward flow through $[(\dbar_0,\Phi_0)]$. Nonetheless, under Assumption \ref{assumption_II}, the method in the proof of Theorem \ref{theorem_CL} can be applied to such $(\dbar_{E},\Phi)$, provided that $(\dbar_{E},\Phi)$ is gauge transformed into a sufficiently `nice' form.

The rest of the paper is organized as follows. Section \ref{section_local_model} contains a collection of notations and preliminary results that we will need later. In Section \ref{section_slice_S_kuranishi_model}, we specialize to a polystable Higgs bundle $(\dbar_{E},\Phi)$ that represents a $\C^{*}$-fixed point in the moduli space. We then realize $(\dbar_{E},\Phi)$ as system of Hodge bundle, with which we define a grading on the deformation complex whose graded pieces are preserved by the automorphism group of $(\dbar_{E},\Phi)$. We then proceed to the construction of the model for the central locus and prove Theorem \ref{theorem_lagrangian}. The most technical part of the paper is the last subsection, where we prove Theorem \ref{theorem_CL}.

\section{Kuranishi local model around a polystable Higgs bundle}\label{section_local_model}

The purpose of this section is to set up notation and collect preliminaries. We begin with an overview of the relevant aspects of the moduli space of semistable Higgs bundles from the gauge theoretic perspective, where we also introduce the notation adhered to throughout the paper. Next, we recall the local Kuranishi model around a polystable Higgs bundle constructed in \cite{Fan22}, which is a major ingredient needed in the discussion of the central locus and its model in the next section. We then review the Kempf-Ness criterion for stability in representation spaces under the action of a connected complex reductive Lie group. In the special case of complex tori, we record a criterion for stability of a vector in terms of the convex polytope associated with its effective weight.

\subsection{The moduli space of semistable Higgs bundles}

Throughout the paper, we fix a compact Riemann surface $X$ of genus $g\geq 2$, equipped with a K\"{a}hler form of unit volume. Denote the canonical bundle of $X$ by $K_X$ and the trivial holomorphic line bundle by $\mathcal{O}_{X}$. Let $E\to X$ be a smooth complex vector bundle of rank $r$ and degree zero, together with a fixed identification $\wedge^{r}E\xrightarrow{\cong} \underline{\C}$ with the trivial line bundle. By a holomorphic vector bundle with $E$ its underlying smooth vector bundle, we mean a pair $\mathcal{E}=(E,\dbar_{E})$ where $\dbar_{E}$ is a Dolbeault operator $\dbar_{E}:\Omega^{p,q}(E)\to \Omega^{p,q+1}(E)$. This makes sense since $\dim_{\C}X=1$, which implies that the integrability condition $\dbar_{E}\circ \dbar_{E}=0$ is always satisfied. For a holomorphic vector bundle $(E,\dbar_{E})$, whenever the complex structure is clear we will simply denote the space of $\dbar_{E}$-holomorphic global sections by $H^{0}(E)$.

We shall be interested in holomorphic vector bundles with structure group $\text{SL}(r,\C)$, i.e. holomorphic vector bundles $\mathcal{E}$ such that the fixed identification $\wedge^{r}E\rightarrow \underline{\C}$ induces $\wedge^{r}\mathcal{E}\xrightarrow{\cong}\mathcal{O}_{X}$. By an $\text{SL}(r,\C)$-Higgs bundle, we mean a pair $(\mathcal{E},\Phi)$ where $\Phi\in \Omega^{1,0}(\sllie(E))$ and $\dbar_{E}\Phi=0$. Here, $\sllie(E)$ is the bundle of traceless endomorphisms of $E$. For convenience, we will simply call it a Higgs bundle or a Higgs pair for short. We will specify a Higgs bundle by a holomorphic structure $\dbar_{E}$ and a \emph{Higgs field} $\Phi$, altogether denoted by the tuple $(\mathcal{E},\Phi)$, $(\mathcal{E},\dbar_{E},\Phi)$ or simply $(\dbar_{E},\Phi)$.

Let $\mathcal{A}=\mathcal{A}(E)$ be the space of Dolbeault operators defining holomorphic $\text{SL}(r,\C)$ vector bundles. It is an affine space modelled on $\Omega^{0,1}(\sllie(E))$. Define the \emph{configuration space} to be $\mathcal{C}:=\mathcal{A}\times \Omega^{1,0}(\sllie(E))$, and let $\mathcal{B}\subset \mathcal{C}$ be the space of Higgs bundles, i.e. $\mathcal{B}=\{(\dbar_{E},\Phi)\in \mathcal{C}: \dbar_{E}\Phi=0\}$. The \emph{(complex) gauge group} $\mathcal{G}_{\C}=\mathcal{G}_{\C}(E)$ consists of automophisms of $E$ fixing the determinant. It acts on $\mathcal{C}$ via conjugation: for $g\in \mathcal{G}_{\C}$ and $(\dbar_{E},\Phi)\in \mathcal{C}$, 
    \begin{equation}
g\cdot (\dbar_{E},\Phi):= (g\circ \dbar_{E},\circ g^{-1}, g\Phi g^{-1})=(\dbar_{E}-(\dbar_{E}g)g^{-1},g\Phi g^{-1}).        
    \end{equation}
This action preserves $\mathcal{B}$. For two Higgs bundles, we shall say they are gauge equivalent or isomorphic if they lie in the same (complex) gauge orbit.

The quotient $\mathcal{B}/\mathcal{G}_{\C}$ is badly behaved. For instance, it is not Hausdorff. Instead, we will look at the `moduli space' of semi-stable Higgs bundles. First, we recall the definitions of various notions of (slope) stability for $\text{SL}(r,\C)$-Higgs bundles. For $(\dbar_{E},\Phi)\in \mathcal{B}$, we say it is \emph{(semi)stable} if $\deg \mathcal{F}(\leq )< 0$ for all proper $\Phi$-invariant holomorphic subbundles $\mathcal{F}$ of $(E,\dbar_{E})$. A \emph{polystable} Higgs bundle is a direct sum of stable Higgs bundles of degree zero. Each semistable Higgs bundle admits a Jordan-H\"{o}lder filtration, whose associated graded gives rise to a polystable Higgs bundle which is unique up to isomorphism (cf. \cite{Sim92}). Let $\mathcal{B}^{ss}$ and $\mathcal{B}^{ps}$ be the locus of semistable and polystable Higgs bundles respectively. We define the moduli space of semistable $\text{SL}(r,\C)$-Higgs bundles to be
    \begin{equation}
\mathcal{M}_{H}=\mathcal{M}_{H}(\text{SL}(r,\C))=\mathcal{B}^{ss}\git \mathcal{G}_{\C}=\mathcal{B}^{ps}/\mathcal{G}_{\C}. 
    \end{equation}
Here, $\mathcal{B}^{ss}\git \mathcal{G}_{\C}$ refers to the quotient by $S$-equivalence classes (two semistable Higgs bundles are $S$-equivalent if their associated graded objects are gauge equivalent). We shall use $[(\dbar_{E},\Phi)]$ to denote the $S$-equivalence class of a semistable Higgs bundle $(\dbar_{E},\Phi)$. More generally, we will use $\mathcal{M}_{H}(r,d)$ to denote the moduli space of semistable Higgs bundles of rank $r$ and degree $d$. The corresponding notions of slope stability are the same, except that we replace $0$ by $d$.

The algebro-geometric construction of $\mathcal{M}_{H}(r,d)$ was carried at \cite{Nit91} in the case of smooth projective curves and in \cite{Sim94} in the case of smooth projective varieties. In the case where $X$ is a closed Riemann surface of genus at least two, the non-singular locus $\mathcal{M}^{s}_{H}$ of $\mathcal{M}_{H}$ is non-empty and consists of the isomorphism classes of stable Higgs bundles. In \cite{Hit87}, $\mathcal{M}^{s}_{H}$ was constructed by analytic means using the standard Kuranishi technique, and it was equipped with a hyperk\"{a}hler structure inherited from that of the configuration space.

To describe the hyperk\"{a}hler structure, we first fix a hermitian metric $h$ on the smooth vector bundle $E\to X$, whose induced metric $\det h$ on $\det E$ is compatible with the standard inner product on the trivial line bundle $\underline{\C}$. For $A\in \Omega^{k}(\text{End}(E))$, we denote by $$A^{*}=A^{*_{h}}$$
the $h$-hermitian adjoint of $A$. Often, when the prescribed hermitian metric $h$ is clear, we will omit the subscript $h$. The tangent space at each point in $\mathcal{C}$ can be identified with the space $\Omega^{0,1}(\sllie(E))\oplus \Omega^{1,0}(\sllie(E))$, which admits a hermitian inner product $H$ given by
    \begin{equation}
        H((\beta_1,\varphi_1),(\beta_2,\varphi_2)):= i\int_{X} \tr(\varphi_1 \wedge \varphi^{*}_2+\beta^{*}_{2}\wedge \beta_1)
    \end{equation}
with respect to the complex structure $I$ defined by
    \begin{equation}
        I(\beta,\varphi):=(i\beta,i\varphi).
    \end{equation}
The configuration space $\mathcal{C}$ can also be identified with the space of $\text{SL}(r,\C)$-connections in the following manner. Observe that every complex connection can be uniquely decomposed into an $h$-unitary connection and an $h$-hermitian one-form. The existence and uniqueness of Chern connections then identifies $\mathcal{A}$ with the space of $h$-unitary connections. Also, sending a hermitian one-form to its $(1,0)$-part gives an isomorphism $\Omega^{1}(i\mathfrak{su}(E))\xrightarrow{\cong}\Omega^{1,0}(\sllie(E))$, where $\mathfrak{su}(E)=\mathfrak{su}(E,h)$ is the bundle of traceless skew $h$-hermitian endomorphisms of $E$. Altogether, we have an identification between $\mathcal{C}$ and the space of $\text{SL}(r,\C)$ connections, with the latter modeled on $\Omega^{1}(\sllie(E))$. This gives another complex structure $J$ on $\mathcal{C}$:
    \begin{equation}
        J(\beta,\varphi):=(i\varphi^{*},-i\beta^{*}).
    \end{equation}
It is straightforward to verify that $(\text{Re}H, I,J,K=IJ)$ is a hyperk\"{a}hler structure on the configuration space $\mathcal{C}$. The K\"{a}hler form  with respect to $I$, $J$ and $K$ are denoted by $\omega_{I}$, $\omega_{J}$ and $\omega_{K}$ respectively. A direct computation then shows that the $I$-holomorphic symplectic form $\Omega_I$ is given by
    \begin{equation}\label{equation_holomorphic_symplectic_form}
        \Omega_I ((\beta_1,\varphi_1),(\beta_2,\varphi_2)) = \int_{X}\tr (\varphi_1\wedge \beta_2-\varphi_2\wedge \beta_1).
    \end{equation}

For $(\dbar_{E},\Phi)\in \mathcal{C}$, let $F_{(\dbar_{E},h)}\in \Omega^{2}(\mathfrak{su}(E,h))$ be the curvature of the Chern connection $\dbar_{E}+\partial^{h}_{E}$ associated with the complex structure defined by $\dbar_{E}$. The \emph{Hitchin equation}
    \begin{equation}
        \mu(\dbar_E, \Phi):=F_{(\dbar_{E},h)}+[\Phi,\Phi^{*}] =0
    \end{equation}
together with the holomorphicity condition
    \begin{equation}
        \dbar_{E}\Phi =0
    \end{equation}
can be intepreted as the hyperk\"{a}hler moment map with respect to the unitary gauge group $\mathcal{G}=\mathcal{G}(h)$, the subgroup of $\mathcal{G}_{\C}$ preserving the hermitian metric $h$. Developed in \cite{Hit87} and \cite{Don87}, and in \cite{Cor88} and \cite{Sim88} for the general case, the nonabelian Hodge correspondence (NAH for short) asserts that a Higgs bundle $(\dbar_{E},\Phi)$ is polystable (stable) if and only if there exists a metric $h$ that solves the Hitchin equation, in which case the connection $$NAH(\dbar_{E},\Phi)=\dbar_{E}+\partial^{h}_{E}+\Phi+\Phi^{*_{h}}$$
is flat and completely reducible (irreducible). Such a connection is unique. Conversely, a flat connection $D$ is completely reducible (irreducible) if and only if there exists a metric $h$ such that in the decomposition $d_{A}+\Psi$ where $d_A$ is $k$-unitary and $\Psi$ is $h$-hermitian, we have
$$d_{A}\Psi^{1,0}=0,$$
in which case $\dbar_{E}=d_{A}''$, the $(0,1)$-part of $d_A$, and $\Phi=\Psi^{1,0}$ defines a polystable (stable) Higgs bundle $(\dbar_{E},\Phi)$. In both directions, the metric $h$ is called a \emph{harmonic metric} to $(\dbar_{E},\Phi)$ or $D$. The automorphism group $G$ of a polystable Higgs bundle acts transitively on the set of its harmonic metrics. If $h$ is a harmonic metric for two complex gauge-equivalent polystable Higgs bundles, then they are $h$-unitary gauge-equivalent. Globally, the NAH yields the homeomorphism $$\mathcal{M}_{H}\cong (\mathcal{B}^{ps}\cap\mu^{-1}(0))/\mathcal{G}.$$

In this paper, we will often need to consider the local geometry around a point $[(\dbar_{E},\Phi)]$ in $\mathcal{M}_{H}$ (more specifically, a $\C^{*}$-fixed point). When $(\dbar_{E},\Phi)$ is stable, a slice consisting of stable Higgs bundles which represents an open neighborhood around $[(\dbar_{E},\Phi)]$ can be obtained using the standard Kuranishi slice technique. However, when $[(\dbar_{E},\Phi)]$ is a singular point in $\mathcal{M}_{H}$, which is the case we are concerned with, the construction of a local model for an open neighborhood of $[(\dbar_{E},\Phi)]$ is more delicate. We will utilize the analytic local model constructed rigorously in \cite{Fan22} which is based on properties of the Yang-Mills-Higgs flow studied in \cite{Wil08}. The next subsection will summarize the construction of this local model. 

\subsection{The local Kuranishi model around a polystable Higgs bundle}

Let $(\dbar_{E},\Phi)$ be a strictly polystable $\text{SL}(r,\C)$-Higgs bundle. The deformation complex of $(\dbar_{E},\Phi)$ is given by
    \begin{equation}\label{equation_deformation_complex}
C^{\bullet}=C^{\bullet}(\dbar_{E},\Phi): \Omega^{0}(\sllie(E))\xrightarrow{D''}\Omega^{0,1}(\sllie(E))\oplus\Omega^{1,0}(\sllie(E))\xrightarrow{D''} \Omega^{1,1}(\sllie(E))
    \end{equation}
where $D'':=\dbar_{E}+\Phi$. For the degree one piece of the complex $C^{1}=\Omega^{0,1}(\sllie(E))\oplus\Omega^{1,0}(\sllie(E))$, we usually use a tuple $u=(\beta,\varphi)$ to denote its elements. We will also use $u$ to represent $\beta+\varphi\in \Omega^{1}(\sllie(E))$. For any $v\in \Omega^{1}(\sllie(E))$, we write $v=v^{0,1}+v^{1,0}$ to denote its type decomposition.  

Fix a harmonic metric $h$ of $(\dbar_{E},\Phi)$. We define $D':=\partial^{h}+\Phi^{*}$ where $\dbar_{E}+\partial^{h}$ is the Chern connection with respect to the metric $h$, $\Phi^{*}$ the $h$-hermitian adjoint of $\Phi$, and $D:=D'+D''$ is the flat connection under NAH. We remark that $D'$ is independent of the choice of harmonic metrics of $(\dbar_{E},\Phi)$. These operators satisfy the following K\"{a}hler identities (for example, see \cite{Sim92}):
\begin{equation}
    (D'')^{*} = -i[\Lambda, D'] \quad \text{and} \quad (D')^{*} = i[\Lambda, D''].
\end{equation}
We denote the space of harmonic forms of the deformation complex by
    \begin{equation}
        \mathcal{H}^{i}=\mathcal{H}^{i}({C^{\bullet}}):=\{x\in C^{i}: D''x=0, (D'')^{*}x = 0\}.
    \end{equation}
The zero-th cohomology space $\mathcal{H}^{0}=\ker D''$ can be identified with the Lie algebra of the automorphism group of $(\dbar_{E},\Phi)$. Moreover, via Serre duality we have $\mathcal{H}^{2}\cong (\mathcal{H}^{0})^{\vee}$. When $(\dbar_{E},\Phi)$ is strictly polystable, $\mathcal{H}^{0}\neq \{0\}$. Denote by $G$ the automorphism group of $(\dbar_{E},\Phi)$, i.e. its stabilizer group under the complex gauge group action. Then $G$ acts on $\mathcal{H}^{i}$, $i=0, 1, 2$, by conjugation. It has a maximal compact subgroup $K=K(h)$ which consists of elements in $G$ fixing the metric $h$. All other harmonic metrics of $(\dbar_{E},\Phi)$ can be obtained from pulling back by elements in $G$. In general, for a gauge transformation $g\in \mathcal{G}_{\C}$, we shall use $g^{*}h$ to refer to the pullback metric defined by $g^{*}h(v,w)=h(gv,gw)$.

In the remainder of this subsection, we will recall the local model around $[(\dbar_{E},\Phi)]\in \mathcal{M}_{H}$ constructed in Section 3 of \cite{Fan22}. Define the \emph{Kuranishi map} $\kappa: \Omega^{0,1}(\sllie(E))\oplus \Omega^{1,0}(\sllie(E))\to \Omega^{0,1}(\sllie(E))\oplus \Omega^{1,0}(\sllie(E))$ associated with $(\dbar_{E},\Phi)$ by 
    \begin{equation}\label{kuranishi_map}
\kappa (\beta,\varphi)  = (\beta,\varphi)+(D'')^{*}\Gamma([\beta,\varphi]),
    \end{equation}
where $\Gamma$ is the Green's operator of the Laplacian $D''(D'')^{*}$ on $\Omega^{1,1}(\sllie(E))$. The automorphism group $G$ acts on both sides by conjugation, and $\kappa$ is $G$-equivariant. We remark that this $G$-action is compatible with the action of the gauge group: for $g\in G$, $g\cdot (\dbar_{E}+\beta,\Phi+\varphi)=(\dbar_{E}+g\beta g^{-1},\Phi+g\varphi g^{-1})$. Let $$\tilde{Z}:=\{(\beta,\varphi)\in \ker D': D''(\beta,\varphi)+[\beta,\varphi]\perp \mathcal{H}^{2}\}.$$
Restricted to $\tilde{Z}$, $\kappa$ takes values in $\mathcal{H}^{1}$. Moreover, $\kappa$ is locally invertible at the origin. There exists an open subset $\tilde{U}$ of $\tilde{Z}$ containing the origin, such that $\kappa|_{\tilde{U}}$ is a local biholomorphism onto an open set $U=\kappa(\tilde{U})$ of $\mathcal{H}^{1}$ containing $0$. By shrinking $\tilde{U}$ if needed, we may take $U$ to be an open ball centered at $0$. Then, the restriction $\kappa|_{\tilde{U}}$ is $K$-equivariant. Let $\kappa^{-1}: U\to \Omega^{0,1}(\sllie(E))\oplus \Omega^{1,0}(\sllie(E))$ be the inverse of $\kappa$. Let
    \begin{equation}\label{equation_Z_definition}
Z:=\{(\beta,\varphi)\in \ker D': D''(\beta,\varphi)+[\beta,\varphi]=0\}\subset \tilde{Z}.
    \end{equation}
Note that $(\dbar_{E},\Phi)+Z\subset \mathcal{B}$. For $u=(\beta,\varphi)\in Z$, we will use interchangeably $$(\dbar_{u},\Phi_u)=(\dbar_{E},\Phi)+u=(\dbar_{E}+\beta,\Phi+\varphi)$$
to denote the Higgs bundle translated from $(\dbar_{E},\Phi)$ by $u$.

Define $k: \mathcal{H}^{1}\to \mathcal{H}^{2}$ by
    \begin{equation}
k(x):=\dfrac{1}{2}P[x,x],
    \end{equation}
where $P$ is the projection to $\mathcal{H}^{2}$. Let $\mathcal{Z}:=k^{-1}(0)\subset \mathcal{H}^{1}$. We remark that (cf. Theorem 5.1, \cite{Fan22}) for $(\beta,\varphi)\in \tilde{Z}$, $$D''(\beta,\varphi)+[\beta,\varphi]=k(\kappa(\beta,\varphi)),$$
from which it follows that $\kappa(Z)\subset \mathcal{Z}$. By local biholomorphicity, $\kappa(Z\cap\tilde{U})=\mathcal{Z}\cap U$ for $U$ sufficiently small. We will refer to either one of $Z$, $\mathcal{Z}$, $Z\cap \tilde{U}$, $\kappa(Z)\cap U$, or their restrictions to smaller open neighborhoods as the \emph{Kuranishi space}. For $U$ sufficiently small, we have the map $\theta:=\kappa^{-1}: \kappa(Z)\cap U\to \mathcal{B}^{ss}$, defined by 
    \begin{equation}\label{equation_kuranishi_inverse_theta}
\theta(x)=(\dbar_{E}+(\kappa^{-1}(x))^{0,1},\Phi+(\kappa^{-1}(x))^{1,0}).        
    \end{equation}
As $\mathcal{H}^{1}$ is a $G$-representation, there is a corresponding notion of stability for vectors in $\mathcal{H}^{1}$. One key step in the construction in \cite{Fan22} is to relate polystability in $\mathcal{H}^{1}$ with polystability of the corresponding Higgs bundles, for vectors in $\kappa (Z)\cap U$ with $U$ sufficiently small. Recall that for a $G$-representation $V$, a nonzero vector $v$ is said to be \emph{(G-)polystable} if its orbit $G\cdot v$ is closed in $V$. If $v$ is polystable and its isotropy group $G_v$ is finite, we say $v$ is \emph{(G-)stable}.

We are now ready to describe the construction of the local model around $[(\dbar_{E},\Phi)]$ in \cite{Fan22}. For a sufficiently small open ball $U$ around the origin, $G\cdot (\mathcal{Z}\cap U)$ is a closed complex subspace of the open set $G\cdot U$, and the Kuranishi space $\mathcal{Z}\cap U$ is open in $G\cdot (\mathcal{Z}\cap U)$. We have the following local parametrization of $\mathcal{M}_{H}$ around $[(\dbar_{E},\Phi)]$.

\begin{proposition}[\cite{Fan22}, Section 3]\label{proposition_yue_local_model}
For a sufficiently small ball $U$ in $\mathcal{H}^{1}$ centered at $0$, the continuous map $\mathcal{Z}\cap U\xrightarrow{\theta}\mathcal{B}^{ss}\longrightarrow \mathcal{M}_{H}$ induces an open embedding $$G\cdot (\mathcal{Z}\cap U)\git G\xlongrightarrow{\pi} \mathcal{M}_{H}.$$
In fact, if $x\in \mathcal{Z}\cap U$ is polystable, then $\theta(x)$ is a polystable Higgs bundle.
\end{proposition}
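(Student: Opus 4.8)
The plan is to bridge two moment-map descriptions of stability through the map $\theta$. On the Higgs bundle side, polystability is detected by the Hitchin equation $\mu(\dbar_E,\Phi)=0$ via NAH; on the linear side, $G$-polystability of a vector in the finite-dimensional representation $\mathcal{H}^{1}$ is detected, by Kempf-Ness, by its $G$-orbit meeting the zero set of the moment map for the $K$-action. I would first record equivariance: since $\kappa$ is $G$-equivariant and the $G$-action on $\mathcal{H}^{1}$ is induced by conjugation, $\theta$ intertwines the $G$-action on $\mathcal{Z}\cap U$ with the complex gauge action, so that $\theta(g\cdot x)=g\cdot\theta(x)$ whenever $x,g\cdot x\in\mathcal{Z}\cap U$ for $g\in G$. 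Hence the composite $\mathcal{Z}\cap U\xrightarrow{\theta}\mathcal{B}^{ss}\to\mathcal{M}_{H}$ is constant on $G$-orbits; as $G$ is reductive and $G\cdot(\mathcal{Z}\cap U)$ is a $G$-invariant closed analytic subspace of the open set $G\cdot U$, it descends to a continuous map $\pi$ on the affine GIT quotient $G\cdot(\mathcal{Z}\cap U)\git G$, whose points are the closed $G$-orbits.

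The heart is the assertion that a polystable $x$ yields a polystable $\theta(x)$. Equipping $\mathcal{H}^{1}$ with the Hermitian structure inherited from $H$, let $m\colon\mathcal{H}^{1}\to\mathfrak{k}$ be the moment map for the $K$-action. I would show that, under the identifications built into $\kappa$, a zero of $m$ along $G\cdot x$ corresponds to a solution of the Hitchin equation in the complex gauge orbit of $\theta(x)$: the infinite-dimensional Hitchin moment map restricts, to the relevant order near the origin, to the finite-dimensional $m$. Granting this, one obtains the chain $x$ is $G$-polystable $\iff$ $G\cdot x$ is closed $\iff$ $G\cdot x$ meets $m^{-1}(0)$ $\iff$ the $\mathcal{G}_{\C}$-orbit of $\theta(x)$ contains a Hitchin solution $\iff$ $\theta(x)$ is polystable, using Kempf-Ness for the middle equivalence and NAH for the last. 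The nontrivial analytic input, imported from \cite{Wil08} and \cite{Fan22}, is that the Yang-Mills-Higgs flow issuing from $\theta(x)$ converges and that its limit, transported back through $\theta$, is modeled by the Kempf-Ness gradient flow of $|m|^{2}$ on $\mathcal{H}^{1}$; this is the step where the real difficulty lies.

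It then remains to promote $\pi$ to an open embedding. For injectivity, if $x,y\in\mathcal{Z}\cap U$ are polystable and $\theta(x),\theta(y)$ are $S$-equivalent, then being polystable they are complex gauge equivalent, say $g\cdot\theta(x)=\theta(y)$; because both lie in the Kuranishi slice transverse to the gauge orbit of $(\dbar_E,\Phi)$, after shrinking $U$ the relating transformation must be carried by the stabilizer $G$, forcing $y\in G\cdot x$, so $\pi$ separates the two closed orbits. For openness I would use completeness of the slice: every semistable Higgs bundle close enough to $(\dbar_E,\Phi)$ is complex gauge equivalent to some $\theta(x)$ with $x\in\mathcal{Z}\cap U$, so the image of $\pi$ contains, and by the same argument at each nearby closed orbit consists of, an open neighborhood of $[(\dbar_E,\Phi)]$. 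Continuity of $\theta$ together with the quotient topology on the GIT quotient then makes $\pi$ a homeomorphism onto its open image. The principal obstacle throughout remains the polystability equivalence of the previous paragraph, since both the Kempf-Ness comparison and the transversality and completeness of the slice ultimately rest on uniform estimates for the Yang-Mills-Higgs flow near a strictly polystable point.
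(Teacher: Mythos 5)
The paper does not prove this proposition: it is imported verbatim from \cite{Fan22} (the bracketed citation in the proposition header is the ``proof''), so there is no in-paper argument to compare yours against. Measured against the strategy of the cited source, your outline is pointed in the right direction --- the content of the local model in \cite{Fan22} is indeed a comparison between the finite-dimensional Kempf--Ness picture on $\mathcal{H}^{1}$ and the infinite-dimensional Hitchin moment map, mediated by convergence of the Yang--Mills--Higgs flow from \cite{Wil08}, and the descent/injectivity/openness bookkeeping you describe is the standard slice argument. But as a proof your proposal is an outline with the entire load-bearing step black-boxed: the assertion that ``a zero of $m$ along $G\cdot x$ corresponds to a solution of the Hitchin equation in the complex gauge orbit of $\theta(x)$'' is exactly the theorem being proved, and saying the Hitchin moment map ``restricts, to the relevant order near the origin, to the finite-dimensional $m$'' is not an argument --- the slice $\mathcal{Z}$ is cut out by the Coulomb condition $D'(\beta,\varphi)=0$ together with integrability, and relating the two moment maps requires the quantitative flow estimates you defer to the references. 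The same applies to the ``completeness of the slice'' used for openness.

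There is also a concrete overclaim in your chain of equivalences. You assert $x$ is $G$-polystable $\iff \theta(x)$ is polystable, but only the forward implication is available from \cite{Fan22} (and only the forward implication is what the proposition states). The paper itself flags this in the discussion preceding Proposition \ref{proposition_stable_vector}: the converse ``does not appear to follow immediately from the construction in \cite{Fan22}'' and is only known for holomorphic vector bundles via \cite{BS20}. The specific link that fails as a biconditional is ``$G\cdot x$ meets $m^{-1}(0)$ $\iff$ the $\mathcal{G}_{\C}$-orbit of $\theta(x)$ contains a Hitchin solution'': the backward direction would require locating the Hitchin solution back inside the finite-dimensional slice, which is not established. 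This does not sink the proposition --- one direction suffices --- but your injectivity argument for $\pi$ quietly uses the stronger two-way correspondence (you pass from $S$-equivalence of $\theta(x),\theta(y)$ to $G$-equivalence of $x,y$), so as written that step leans on the unproved converse rather than on the uniqueness-of-closed-orbits argument internal to the GIT quotient.
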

Here, $G\cdot (\mathcal{Z}\cap U)\git G$ denotes the analytic GIT quotient of the reduced complex space $G\cdot (\mathcal{Z}\cap U)$.

\subsection{Stability in linear representations}\label{section_linear_rep}

The purpose of this subsection is to collect some useful criteria for stability in a $G$-representation $V$ of a connected complex reductive group $G$, and to set up the notation to be used in the last section in the case when $G$ is a complex torus. First, we recall the theorem of Kempf and Ness \cite{KN79}, which relates the polystability and stability of a nonzero vector $v\in V$ in terms of the existence of minimizers of the length squared functional. Then, we specialize to the case when $G$ is a torus, where $V$ admits a weight decomposition. In this case, stability of $v$ can be detected from the affine properties of the convex hull of its effective weights (see Theorem 9.2 in \cite{Dol03} for example).

Let $G$ be a connected complex reductive group with a maximal compact subgroup $K$ and $V$ a complex $G$-representation with a hermitian inner product such that $K$ acts unitarily. Recall that we have the decomposition $G=K\text{exp}(i\mathfrak{k})$, where $\mathfrak{k}=\text{Lie}(K)$. Now, let $v\in V-\{0\}$. The \emph{Kempf-Ness functional} of $v$, denoted by $\ell_v: i\mathfrak{k}\to \R$, is given by
    \begin{equation}\label{equation_kempf_ness_functional}
        \ell_v(x):= ||e^{x}\cdot v||^2.
    \end{equation}

\begin{theorem}[\cite{KN79}, Kempf-Ness]
A nonzero vector $v$ is polystable if and only if $\inf \ell_v$ can be attained, and is stable if and only if the minimizer is unique.    
\end{theorem}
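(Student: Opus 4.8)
The plan is to run the classical argument of Kempf and Ness, organised around the convexity of $\ell_v$ along the geodesics of the symmetric space $G/K$. Write $\mathfrak{p}:=i\mathfrak{k}$; since $K$ acts unitarily, every $\xi\in\mathfrak{k}$ acts on $V$ by a skew-Hermitian operator, so every $x\in\mathfrak{p}$ acts by a self-adjoint operator and $e^{x}$ by a positive self-adjoint one. For fixed $w\in V$ and $x\in\mathfrak{p}$, decomposing $w=\sum_{\lambda}w_{\lambda}$ into eigenvectors of $x$ gives $\|e^{tx}w\|^2=\sum_{\lambda}e^{2t\lambda}\|w_{\lambda}\|^2$, a finite sum of exponentials. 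First I would record the two consequences of this formula: (i) $t\mapsto\|e^{tx}w\|^2$ is convex, and strictly convex unless $xw=0$; and (ii) its derivative at $t=0$ equals $2\,\mathrm{Re}\,\brak{xw,w}$, which is (up to sign and the identification $x=i\xi$) the pairing of the moment map $\mu(w)$ for the $K$-action with $\xi\in\mathfrak{k}$. Because $\ell_v(x)=\|e^{x}v\|^2$ is exactly the length-squared functional $F_v(g)=\|gv\|^2$ descended to $K\backslash G\cong\exp(\mathfrak{p})$ (using $\|kg v\|=\|gv\|$ and the Cartan decomposition $G=K\exp(\mathfrak{p})$), these facts say that $\ell_v$ is geodesically convex, that $w=e^{x_0}v$ is a critical point of $\ell_v$ iff $\mu(w)=0$, and — by convexity — that every critical point is a global minimum. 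Thus the three notions ``$x_0$ minimizes $\ell_v$'', ``$w=e^{x_0}v$ is a minimal vector ($\|w\|\le\|gw\|$ for all $g$)'', and ``$\mu(w)=0$'' coincide.

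Next I would dispose of the easy implications. Using unitarity of $K$ and $G=K\exp(\mathfrak{p})$ one has $\inf_{x}\ell_v(x)=\inf_{g\in G}\|gv\|^2=\inf_{w\in\overline{G\cdot v}}\|w\|^2$, and a bounded-minimizing-sequence argument shows this last infimum is attained at some $w_{\infty}\in\overline{G\cdot v}$ (the closure is closed and the sublevel sets of $\|\cdot\|^2$ are bounded). If $v$ is polystable, i.e.\ $G\cdot v$ is closed, then $w_{\infty}\in G\cdot v$, and writing $w_{\infty}=ke^{x_0}v$ via the Cartan decomposition shows that $\ell_v(x_0)=\inf\ell_v$ is attained. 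This proves ``polystable $\Rightarrow$ $\inf\ell_v$ attained''.

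The substantive direction is the converse: if $\inf\ell_v$ is attained, equivalently if $G\cdot v=G\cdot w$ for a minimal vector $w$, then the orbit is closed. Here the key estimate, again extracted from the eigenvalue formula, is that any direction $x\in\mathfrak{p}$ along which $\ell_w$ stays bounded as $t\to+\infty$ must annihilate $w$: boundedness forces $w_{\lambda}=0$ for all $\lambda>0$, while minimality at $t=0$ forces $\sum_{\lambda}\lambda\|w_{\lambda}\|^2=0$, and together these give $xw=0$, i.e.\ $x\in\mathfrak{g}_w\cap\mathfrak{p}$. Thus $\ell_w$ is proper transverse to the flat $\mathfrak{g}_w\cap\mathfrak{p}$ along which it is constant, and I would use this properness to show that any limit $w'=\lim_n e^{x_n}w$ of orbit points in fact lies in $G\cdot w$. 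I expect this to be the main obstacle: one must control the components of $x_n$ along and transverse to $\mathfrak{g}_w\cap\mathfrak{p}$, which do not commute, so the argument has to be run with the nonpositively curved ($\mathrm{CAT}(0)$) geometry of $G/K$ — convexity of the distance function and of $\ell_w$ along geodesics — rather than a naive splitting of $e^{x_n}$. Once this is done, polystability follows, completing the first equivalence.

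Finally, for stability I would analyse uniqueness of the minimizer through strict convexity. If $x_0,x_1$ both minimize $\ell_v$, then $\ell_v$ is constant along the geodesic of $G/K$ joining $e^{x_0}K$ to $e^{x_1}K$, so strict convexity fails in its initial direction $x$, forcing $xw_0=0$ with $w_0=e^{x_0}v$; hence the minimizer is unique iff $\mathfrak{g}_{v}\cap\mathfrak{p}=0$. The last step is to convert this into finiteness of $G_v$: since $G_v$ is a complex subgroup of $G$, its Lie algebra $\mathfrak{g}_v$ is invariant under multiplication by $i$, so $\mathfrak{g}_v\cap\mathfrak{p}=0$ already forces $\mathfrak{g}_v=0$; combined with closedness of the orbit (which holds because a minimizer exists) and reductivity of the isotropy group of a closed orbit, $\dim G_v=0$ yields that $G_v$ is finite. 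Conversely, if $v$ is stable then $G_v$ is finite, so $\mathfrak{g}_v=0$ and the minimizer — which exists since $v$ is polystable — is unique. This establishes ``$v$ stable $\iff$ the minimizer of $\ell_v$ is unique''.
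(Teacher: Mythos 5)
First, a remark on the comparison itself: the paper does not prove this statement. It is quoted verbatim from \cite{KN79} and used as a black box (the only argument the paper supplies nearby is the proof of Proposition 2.4, which \emph{assumes} the Kempf--Ness theorem). So your proposal can only be measured against the classical proof. Your convexity framework is the correct one, and the easy direction --- polystable $\Rightarrow$ $\inf\ell_v$ attained, via a bounded minimizing sequence in the closed orbit and the Cartan decomposition --- is complete. But two steps are genuinely missing.

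(1) The hard direction, ``$\inf\ell_v$ attained $\Rightarrow$ $G\cdot v$ closed,'' is not actually proved. You correctly extract the key computation (if $w$ is minimal and $t\mapsto\ell_w(tx)$ stays bounded as $t\to+\infty$, then the positive weights of $w$ for $x$ vanish by boundedness and the negative ones by criticality, so $xw=0$), but you then write that you ``would use this properness'' and that you ``expect this to be the main obstacle,'' without resolving how to pass from one-parameter rays to arbitrary sequences $g_nw\to y$. The classical way to close this is not CAT(0) geometry but the Hilbert--Mumford/Birkes theorem: if $G\cdot w$ were not closed, the unique closed orbit in $\overline{G\cdot w}$ is reached as $\lim_{t\to\infty}e^{tx}w$ for some $x\in\mathfrak{p}$ (after conjugating the destabilizing one-parameter subgroup into $\exp(\mathfrak{p})$), and your computation then forces $xw=0$, so the limit is $w$ itself --- a contradiction. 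Without this input, or a completed properness argument transverse to $\mathfrak{g}_w\cap\mathfrak{p}$, the first equivalence is not established. (2) In the stability part, the step ``$\mathfrak{g}_v$ is $i$-invariant, so $\mathfrak{g}_v\cap\mathfrak{p}=0$ already forces $\mathfrak{g}_v=0$'' is false as linear algebra: the complex line $\C\cdot(1,i)\subset\C^2=\R^2\otimes_{\R}\C$ is $i$-invariant yet meets both $\R^2$ and $i\R^2$ only in $0$. What rescues the conclusion is that the stabilizer of a \emph{minimal} vector is compatible with the Cartan decomposition, $G_w=(K\cap G_w)\exp(\mathfrak{g}_w\cap\mathfrak{p})$ --- itself a consequence of your strict-convexity computation applied to $g=ke^{x}\in G_w$ --- after which $\mathfrak{g}_w\cap\mathfrak{p}=0$ gives $G_w\subset K$, hence $\mathfrak{g}_w\subset\mathfrak{k}\cap i\mathfrak{k}=0$ and $G_w$ finite. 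Both points need to be filled in before the proposal constitutes a proof.
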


Next, we will specialize to the case where $G$ is an $k$-dimensional complex torus, i.e. $G\cong (\C^{\times})^{k}$. The setup and results in the rest of this subsection will not be used until the last section, and hence can be safely skipped until then.

The representation space $V$ admits an orthogonal decomposition into weight spaces
    \begin{equation}
        V=\bigoplus_{\lambda\in \Lambda_G} V_{\lambda}
    \end{equation}
where $\Lambda_G\subset X(G):=\text{Hom}(G,\C^{*})$ is the set of weights of the $G$-representation. We identify $X(G)\otimes_{\Z}\R$ with $(i\mathfrak{k})^{\vee}\cong \R^{k}$ so that we can regard $X(G)\subset (i\mathfrak{k})^{\vee}$ as an integral lattice of the dual space $(i\mathfrak{k})^{\vee}$.

Let $v\in V-\{0\}$. For $\lambda\in \Lambda_G$, we let $v_{\lambda}$ be the projection of $v$ to the $\lambda$-weight space. Let
    \begin{equation}\label{equation_effective_weight}
\Lambda_{G}(v):=\{\lambda\in \Lambda_{G}: v_{\lambda}\neq 0\}.      
    \end{equation}
We will call the set $\Lambda_{G}(v)$ the \emph{effective weight} of $v$ with respect to the $G$-action. Finally, let $C(v)=\text{Conv}(\Lambda_{G}(v))$ be the closed convex hull of $\Lambda_{G}(v)$ in $(i\mathfrak{k})^{\vee}$.

In the last section, we will need the following criterion of the stability of $v$ in terms of properties of $C(v)$, which is essentially a visual depiction of the Hilbert-Mumford criterion. It can be found, for example, in Theorem 9.2 of \cite{Dol03}. For completeness, we will give the proof of the equivalence between this criterion and the existence of unique minimizer of the Kempf-Ness functional.
\begin{proposition}\label{prop-convexhull}
The following are equivalent:
    \begin{enumerate}[label=(\roman*)]
        \item
$v$ is stable
        \item
$\ell_v$ has a unique minimizer.
        \item
$0\in C(v)^{\circ}$, the interior of $C(v)$ (in the euclidean topology of $\R^{k}$).
    \end{enumerate}
\end{proposition}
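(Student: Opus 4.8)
The plan is to take the equivalence (i)$\Leftrightarrow$(ii) for granted, since it is exactly the Kempf--Ness theorem quoted above (with \emph{stable} meaning polystable with finite isotropy, i.e. unique minimizer), and to concentrate the work on (ii)$\Leftrightarrow$(iii). The starting point, and the step that makes everything else routine, is to write $\ell_v$ explicitly in the torus case. Using the orthogonal weight decomposition $v=\sum_{\lambda}v_\lambda$, the fact that $e^{x}$ acts on $V_\lambda$ by the positive real scalar $e^{\langle\lambda,x\rangle}$ for $x\in i\mathfrak{k}\cong\R^{k}$ (here $\langle\cdot,\cdot\rangle$ is the pairing between $X(G)\otimes\R$ and $i\mathfrak{k}$), together with unitarity of $K$ and orthogonality of the weight spaces, I would obtain
\[
\ell_v(x)=\sum_{\lambda\in\Lambda_G(v)}\norm{v_\lambda}^{2}\,e^{2\langle\lambda,x\rangle}.
\]
This exhibits $\ell_v$ as a positive combination of exponentials of the linear functionals $\langle\lambda,\cdot\rangle$ indexed precisely by the effective weights, hence a smooth convex function whose asymptotics are governed by the support function $h(\eta)=\max_{\lambda\in\Lambda_G(v)}\langle\lambda,\eta\rangle$ of $C(v)$. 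The equivalence (ii)$\Leftrightarrow$(iii) then becomes a dictionary between the position of $0$ relative to $C(v)$ and the behaviour of this sum along rays.

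For (iii)$\Rightarrow$(ii) I would argue as follows. The condition $0\in C(v)^{\circ}$ is equivalent to $h(\eta)>0$ for every $\eta\neq 0$; by continuity and compactness of the unit sphere there is $\delta>0$ with $h(\eta)\ge\delta\norm{\eta}$, so picking the dominant weight gives $\ell_v(x)\ge c_0\,e^{2\delta\norm{x}}$ with $c_0=\min_{\lambda}\norm{v_\lambda}^{2}>0$. Thus $\ell_v$ is coercive and attains a minimum. Moreover $0\in C(v)^{\circ}$ forces the effective weights to span $\R^{k}$ linearly, so the Hessian $\sum_{\lambda}4\norm{v_\lambda}^{2}e^{2\langle\lambda,x\rangle}\lambda\lambda^{\mathsf{T}}$ is positive definite and $\ell_v$ is strictly convex; a coercive strictly convex function has a unique minimizer.

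For (ii)$\Rightarrow$(iii) I would prove the contrapositive in two cases. If $0\notin C(v)$, the separating hyperplane theorem applied to the compact convex set $C(v)$ yields $\eta$ with $\langle\lambda,\eta\rangle\ge\delta>0$ for all effective $\lambda$; then $\ell_v(-t\eta)\to 0$ as $t\to+\infty$, so $\inf\ell_v=0$ is never attained and there is no minimizer. If instead $0\in\partial C(v)$, a supporting hyperplane at $0$ gives $\eta\neq 0$ with $\langle\lambda,\eta\rangle\le 0$ for all $\lambda$; along this direction every exponential term is non-increasing, so starting from any minimizer $x_0$ the function $t\mapsto\ell_v(x_0+t\eta)$ is non-increasing yet bounded below by $\ell_v(x_0)$, forcing it to be constant. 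Hence the whole ray $x_0+t\eta$, $t\ge 0$, consists of minimizers and the minimizer is not unique. In both cases (ii) fails, and since $C(v)$ is closed these two cases exhaust $0\notin C(v)^{\circ}$.

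The main obstacle is the boundary case $0\in\partial C(v)$: here $\ell_v$ is still bounded below and does attain its infimum (so $v$ remains polystable), and the real content is that the minimizer ceases to be unique, reflecting a positive-dimensional isotropy. Pinning this down requires the non-strict direction of the argument, namely that $\ell_v$ is \emph{exactly} constant rather than merely bounded along the kernel direction $\eta$, which is where the monotonicity of each term in $\langle\lambda,\eta\rangle\le 0$ is used. The only genuinely analytic point elsewhere is upgrading the pointwise positivity $h(\eta)>0$ to uniform coercivity via compactness of the sphere; the remainder is the convex-geometric translation between $C(v)$ and the exponential sum.
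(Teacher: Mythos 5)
Your proposal is correct, and it follows the same skeleton as the paper — (i)$\Leftrightarrow$(ii) is delegated to Kempf--Ness, and the explicit expansion $\ell_v(x)=\sum_{\lambda\in\Lambda_G(v)}\|v_\lambda\|^2e^{2\langle\lambda,x\rangle}$ is the common starting point — but the two implications are carried out by genuinely different mechanisms. For (iii)$\Rightarrow$(ii), the paper establishes existence of a minimizer by ruling out unbounded minimizing sequences (extracting a limit direction $u$ on the unit sphere and deriving $\langle C(v),u\rangle\le 0$, contradicting $0\in C(v)^\circ$), and establishes uniqueness by restricting $\ell_v$ to lines through a minimizer and showing the set of minimizers is $x_0+D$ with $D=\bigcap_\lambda\ker\lambda$, which is forced to be $\{0\}$; you instead prove a quantitative coercivity bound $\ell_v(x)\ge c_0e^{2\delta\|x\|}$ from the support function and get uniqueness from strict convexity, the Hessian $\sum_\lambda 4\|v_\lambda\|^2e^{2\langle\lambda,x\rangle}\lambda\lambda^{\mathsf T}$ being positive definite precisely because $0\in C(v)^\circ$ forces the effective weights to span. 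Note that your strict-convexity step and the paper's $D=\{0\}$ step are the same fact in different clothing. For (ii)$\Rightarrow$(iii), the paper argues the direct implication and is rather terse about the final separation step ("a similar argument together with the hyperplane separation theorem"); your contrapositive, split into the strictly-separated case $0\notin C(v)$ (infimum $0$ not attained) and the boundary case $0\in\partial C(v)$ (a whole ray of minimizers along the supporting direction), fills in exactly the detail the paper elides. The trade-off: your version is more self-contained and quantitative; the paper's minimizing-sequence argument is the one that generalizes more readily beyond the torus/finite-dimensional setting.
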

\begin{proof}
The equivalence between (i) and (ii) is just a special case of the Kempf-Ness theorem. In terms of the weight space decomposition, we can re-write (\ref{equation_kempf_ness_functional}) as
    \begin{equation}
\ell_v(x) = \sum_{\lambda\in \Lambda_{G}(v)} ||v_\lambda||^2 e^{2\langle \lambda, x\rangle}. 
    \end{equation}

First, we assume (ii). Note that for any $v'\in G\cdot v$, $\Lambda_{G}(v')=\Lambda_{G}(v)$. Hence, we can assume without loss of generality that the unique minimizer takes place at the origin. Since $\Lambda_{G}(v)$ is finite, $C(v)$ is a convex polytope. If $k=1$, then $C(v)$ is a closed interval of the form $[-n,m]$ where $n,m\in \Z_{>0}$. Now, assume $k\geq 2$ We first claim that $C(v)$ is $k$-dimensional. Suppose to the contrary that $\dim C(v)<k$. Then $C(v)$ is contained in some hyperplane $H=\{v\in (i\mathfrak{k})^{\vee}: \langle v,u\rangle = a\}$ for some $u\in i\mathfrak{k}$ and $a\leq 0$. For $t\geq 0$, $\ell_v (tu)=\sum ||v_\lambda||^2 e^{2ta}\leq \ell_v(0)$ . This contradicts the assumption that $0$ is the unique minimizer of $\ell_v$. A similar argument together with the hyperplane separation theorem will show that $0\in C(v)^{\circ}$.

Conversely, assume (iii). First, we claim that $\inf \ell_v$ can be attained. Let $S$ be the unit sphere in $i\mathfrak{k}$ determined by some inner product. Pick a minimizing sequence $\{y_j\}_{j\geq 0}$. Without loss of generality, we may assume the sequence is unbounded. Hence, passing to subsequences if necessary, we may write $y_j=r_j u_j$ where $0<r_j\nearrow \infty$, $u_j\in S$ and $u_j\to u\in S$. Now, we evaluate $$\ell_v (y_j) = \sum ||v_\lambda||^2 (e^{2\langle \lambda, u_j\rangle})^{r_j}\to \inf \ell_v.$$
Since $\ell_v(y_j)$ is bounded, we have $\langle \lambda,u\rangle \leq 0$ for all $\lambda \in \Lambda_{G}(v)$. As $C(v)$ is the convex hull of $\Lambda_{G}(v)$, it implies $\langle C(v),u\rangle \leq 0$. This contradicts to the assumption $0\in C(v)^{\circ}$ since any open ball centered at $0$ contains some point $y$ such that $\langle y,u\rangle >0$. Therefore, no unbounded minimizing sequences exist and thus $\inf \ell_v$ can be attained.

Let $x_0$ be a minimizer of $\ell_v$. For any $w\neq 0$, consider the function $$t\mapsto \ell_v(x_0+tw)=\sum ||v_\lambda||^2 e^{2\langle \lambda, x_0\rangle}e^{2\langle \lambda, w\rangle t},$$ which is convex and has a global minimum at $t=0$. Moreover, it is either strictly convex and thereby uniquely minimized, or it is constant. In the latter case, we have $w\in D$, where $$D=\bigcap_{\lambda\in \Lambda'(v)} \ker\lambda.$$

It is also easy to see that for any $w\in D$, $t\mapsto \ell_v (x_0+tw)\equiv \ell_v (x_0)$. Therefore, the set of minimizers of $\ell_v$ is $x_0+D$. But $\langle\Lambda_{G}(v), D\rangle \equiv 0$ by definition, so $C(v)\subset D^{\perp}:=\{v\in (i\mathfrak{k})^{\vee}: \langle v, D\rangle\equiv 0\}$. As $0\in C(v)^\circ$ by assumption, we must have $D^{\perp}=(i\mathfrak{k})^{\vee}$. It follows that $D=\{0\}$ and thus $x_0$ is the unique minimizer.
\end{proof}
\section{The central locus}\label{section_slice_S_kuranishi_model}

The aim of this section is to prove Theorem \ref{theorem_lagrangian}. Given a $\C^{*}$-fixed point $[(\dbar_{E},\Phi)]\in \mathcal{M}_{H}$, we will define and construct a model for the central locus of the upward flow through $[(\dbar_{E},\Phi)]$. In the case where $(\dbar_{E},\Phi)$ is stable, this construction is the same as the global parametrization of the upward flow through $[(\dbar_{E},\Phi)]$ obtained in \cite{CW19}. When $(\dbar_{E},\Phi)$ is strictly polystable, however, we need to take into account the non-trivial automorphism group $G$ of $(\dbar_{E},\Phi)$. Another difference compared with the stable case is that there is an ambiguity in decomposing $(\dbar_{E},\Phi)$ into a system of Hodge bundles (to be recalled below), a step that is needed to decompose the deformation complex of $(\dbar_{E},\Phi)$ into graded pieces.

In the following, We begin by recalling basic facts about the $\C^{*}$-action on $\mathcal{M}_{H}$. We then recall the notion of system of Hodge bundles and prove a couple of auxiliary results. In the next subsection, we decompose $(\dbar_{E},\Phi)$ into to a system of Hodge bundles in a specific way, so that $G$ respects the induced graded structure on the deformation complex of $(\dbar_{E},\Phi)$. This allows us to define the central locus $S$ of the upward flow through $[(\dbar_{E},\Phi)]$. We then show that $S\cap \mathcal{M}_{H}^{s}$ is Lagrangian (Proposition \ref{proposition_central_locus_lagrangian}). Finally, under Assumption \ref{assumption_I} or Assumption \ref{assumption_II}, we prove that $S\cap \mathcal{M}^{s}_{H}$ is non-empty (Theorem \ref{theorem_non_empty}). Together, this proves Theorem \ref{theorem_lagrangian}.

\subsection{$\C^{*}$-fixed points in $\mathcal{M}_{H}$ and system of Hodge bundles (SHB)}
The moduli space $\mathcal{M}_{H}$ admits a $\C^{*}$-action $t\cdot [(\dbar_{E},\Phi)]\mapsto [(\dbar_{E},t\Phi)]$. For $[(\dbar_{E},\Phi)]\in \mathcal{M}_{H}$, as a consequence of the properness of the Hitchin fibration, the limit $$\lim_{t\to 0}[(\dbar_{E},t\Phi)]$$
always exists, and we shall call it the \emph{downward limit} of $[(\dbar_{E},\Phi)]$. Alternatively, the existence of downward limits can be deduced from the constructions in \cite{Sim10}.

By a $\C^{*}$-fixed point, we mean a point in $\mathcal{M}_{H}$ fixed by the $\C^{*}$-action. Downward limits are always $\C^{*}$-fixed points. A polystable representative of a $\C^{*}$-fixed point can be written as a \emph{system of Hodge bundles}. Recall that a system of Hodge bundles (SHB for short) is a direct sum of holomorphic vector bundles $(E,\dbar_{E})=\bigoplus_{a=1}^{k}(E_{a},\dbar_{a})$ together with a Higgs field $\Phi=\sum_{a=1}^{k-1}\Phi_{a}$, where $\Phi_{a}\in \Omega^{1,0}(\text{Hom}(E_{a},E_{a+1})$ and $\dbar_{E}\Phi_{a}=0$. When an SHB is semistable as a Higgs bundle, one can show that $[(\dbar_{E},\Phi)]$ is a $\C^{*}$-fixed point. Conversely, if $(\dbar_{E},\Phi)$ is polystable and $[(\dbar_{E},\Phi)]$ is a $\C^{*}$-fixed point, then $(\dbar_{E},\Phi)$ can be endowed with an SHB structure, which is unique when $(\dbar_{E},\Phi)$ is stable. The summands $E_{a}$ are mutually orthogonal with respect to the harmonic metric of $(\dbar_{E},\Phi)$.

In this article, there is one SHB structure for a polystable $(\dbar_{E},\Phi)$ that is convenient for our purposes. Its construction will be carried out in the next subsection. In preparation for that, we first establish the following

\begin{proposition}\label{proposition_shb_block}
Suppose $(\dbar_{E},\Phi)$ is polystable and $[(\dbar_{E},\Phi)]\in \mathcal{M}_{H}$ is a $\C^{*}$-fixed point. Then $(\dbar_{E},\Phi)$ can be written as a direct sum of stable Higgs bundles which represent $\C^{*}$-fixed points.
\end{proposition}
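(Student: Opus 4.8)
The plan is to construct the decomposition from the \emph{canonical isotypic decomposition} of the polystable Higgs bundle, and then to show, using connectedness of $\C^*$, that each isotypic stable type is already a $\C^*$-fixed point.

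First I would use polystability to write
$$(\dbar_E,\Phi)\cong \bigoplus_{i=1}^{n} W_i^{\oplus m_i},$$
where the $W_i=(\dbar_{W_i},\Phi_{W_i})$ are pairwise non-isomorphic stable Higgs bundles of degree zero and $m_i\geq 1$. This is the isotypic decomposition in the abelian category of slope-zero semistable Higgs bundles, in which stable objects are the simple objects; it is canonical because the isotypic pieces are intrinsically defined (for instance as images of the evaluation maps $\mathrm{Hom}(W_i,(\dbar_E,\Phi))\otimes W_i\to(\dbar_E,\Phi)$), and in particular it is unique up to isomorphism. Each $W_i$ occurs as a subobject of a slope-zero polystable object, hence has degree zero. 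It therefore suffices to prove that each stable type $[W_i]$ is a $\C^*$-fixed point; then $(\dbar_E,\Phi)=\bigoplus_i W_i^{\oplus m_i}$ is the required direct sum of stable $\C^*$-fixed Higgs bundles.

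Next I would unwind the hypothesis. A point of $\mathcal{M}_H$ is $\C^*$-fixed precisely when $(\dbar_E,t\Phi)$ and $(\dbar_E,\Phi)$ are complex gauge equivalent for every $t\in\C^*$ (recall that on polystable representatives $S$-equivalence coincides with isomorphism, and that rescaling $\Phi$ by $t\in\C^*$ preserves polystability, since $\Phi$-invariant and $t\Phi$-invariant subbundles coincide). Rescaling the decomposition above gives
$$(\dbar_E,t\Phi)\cong\bigoplus_{i=1}^{n} W_i(t)^{\oplus m_i},\qquad W_i(t):=(\dbar_{W_i},t\Phi_{W_i}),$$
and the $W_i(t)$ are again pairwise non-isomorphic stable bundles of degree zero, so this is the isotypic decomposition of $(\dbar_E,t\Phi)$. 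By uniqueness of the isotypic decomposition together with the hypothesis $(\dbar_E,t\Phi)\cong(\dbar_E,\Phi)$, the set of isomorphism classes $\{[W_i(t)]\}_{i}$ equals the fixed finite set $\{[W_j]\}_{j}$, with matching multiplicities, for every $t$.

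Finally I would invoke connectedness. Fixing $i$, the orbit map $t\mapsto[W_i(t)]$ is the continuous $\C^*$-orbit of $[W_i]$ in the moduli space of stable Higgs bundles of the appropriate rank and degree, and by the previous step its image lies in the finite set $\{[W_j]\}_j$. A continuous map from the connected space $\C^*$ into a finite subset of a Hausdorff space is constant, so $[W_i(t)]=[W_i]$ for all $t$; that is, $(\dbar_{W_i},t\Phi_{W_i})\cong(\dbar_{W_i},\Phi_{W_i})$, which is exactly the statement that $[W_i]$ is $\C^*$-fixed. The main obstacle the argument must overcome is the a priori possibility that the $\C^*$-action \emph{permutes} the isotypic types, so that no single summand is individually fixed even though the whole bundle is; this is precisely what the connectedness of $\C^*$ rules out. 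The only other point requiring care is the uniqueness of the isotypic decomposition, which rests on the semisimplicity of polystable objects in the slope-zero category.
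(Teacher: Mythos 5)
Your proof is correct, and while it shares the paper's overall skeleton (decompose into stable summands, then show each summand is individually $\C^{*}$-fixed), the mechanism you use for the crucial step is genuinely different. The paper picks $t$ with $|t|=1$ not a root of unity, obtains a permutation $\sigma$ matching the stable summands of $(\dbar_{E},t\Phi)$ with those of $(\dbar_{E},\Phi)$, composes to get $(\dbar_{E^{i}},t^{j}\Phi^{i})\cong(\dbar_{E^{i}},\Phi^{i})$ for $j=\mathrm{ord}(\sigma)$, and then invokes Lemma 4.1 of Simpson: the generalized eigenspaces of the resulting isomorphism endow $(\dbar_{E^{i}},\Phi^{i})$ with a system-of-Hodge-bundles structure, which in turn yields fixedness under all of $\C^{*}$. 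You instead rule out the permutation of isotypic types topologically: the orbit map $t\mapsto[W_{i}(t)]$ is continuous, lands in a finite (hence discrete) subset of the Hausdorff moduli space, and is therefore constant by connectedness of $\C^{*}$. Your route is more elementary and bypasses Simpson's lemma at this step; it also handles the preliminary reduction (that $[(\dbar_{E},\Phi)]=[(\dbar_{E},t\Phi)]$ upgrades to an isomorphism of polystable representatives for \emph{every} $t$, not just $t\in S^{1}$) more directly, by observing that $\Phi$-invariant and $t\Phi$-invariant subbundles coincide, whereas the paper first argues on $S^{1}$ via harmonic metrics. What the paper's argument buys in exchange is the SHB structure itself, on $(\dbar_{E},\Phi)$ and on each stable factor; that structure is not part of the statement but is exactly what is needed in the following subsection to set up the graded deformation complex, so your proof establishes the proposition as stated but would still require an appeal to Simpson's lemma (or an equivalent construction) downstream.
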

\begin{proof}
First, we claim that all $t\in \C^{*}$, $(\dbar_{E},t\Phi)$ lie in the same gauge equivalence class. This is true for $t\in S^{1}$ since $[(\dbar_{E},\Phi)] = [(\dbar_{E},t\Phi)]$ and the same harmonic metrics solve the Hitchin equation for each $(\dbar_{E},t\Phi)$. Pick a $t$, $|t|=1$, which is not a root of unity. Then there exists an isomorphism $f_{t}$ from $(\dbar_{E},\Phi)$ to $(\dbar_{E},t\Phi)$. In particular, $f_t$ is holomorphic with respect to the holomorphic structure defined by $\dbar_{E}$. Proved in Lemma 4.1 of \cite{Sim92}, the generalized eigenspaces of $f_{t}$ give $(\dbar_{E},\Phi)$ a system of Hodge bundles structure, with which we can construct isomorphisms to all $(\dbar_{E},t\Phi)$, $t\in \C^{*}$.

Next, we decompose $(\dbar_{E},\Phi)$ into a direct sum of stable summands $\bigoplus_{i=1}^{k}(\dbar_{E^i},\Phi^{i})$. We claim each $(\dbar_{E^i},\Phi^{i})$ is a $\C^{*}$-fixed point in the moduli space of degree zero polystable Higgs bundles of the corresponding ranks. Pick a non-zero scalar $t\in \C^{*}$ which is not a root of unity. Each $(\dbar_{E^i},t\Phi^i)$ is stable and we have $(\dbar_{E},\Phi)\cong (\dbar_{E},t\Phi)=\bigoplus_{i}(\dbar_{E^i},t\Phi^{i})$. It follows that $(\dbar_{E^i},t\Phi^{i})\cong (\dbar_{E^{\sigma(i)}},\Phi^{\sigma(i)})$ for some permutation $\sigma$ of $\{1,\cdots, k\}$. Composing these isomorphisms, we have $(\dbar_{E^{i}},t^{j}\Phi^{i})\cong (\dbar_{E^{\sigma^{j}(i)}},\Phi^{\sigma^{j}(i)})$. Now, setting $j=\text{ord}(\sigma)$, we have $(\dbar_{E^i},\Phi^{i})\cong (\dbar_{E^i},t^{j}\Phi^{i})$. Applying the procedure in Lemma 4.1 of \cite{Sim92} again, we see that each $[(\dbar_{E^{i}},\Phi^{i})]$ is a $\C^{*}$-fixed point.
\end{proof}

We will also need the following lemma, which roughly speaking says that taking downward limits commutes with direct sums.

\begin{lemma}\label{lemma_limit_direct_sum_commute}
Let $(\mathcal{E},\dbar_{E},\Phi)$ be a polystable Higgs bundle. Suppose $(\mathcal{E},\dbar_{E},\Phi)=\bigoplus_{i=1}^{k} (\mathcal{E}^{i},\dbar^{i},\Phi^{i})$, where each summand $(\mathcal{E}^{i},\Phi^{i})$ is a stable Higgs bundle with downward limit $\lim_{t\to 0}[(\mathcal{E}^{i},\dbar^{i},t\Phi^{i})]=[(\mathcal{E}^{i}_{0},\dbar^{i}_{0},\Phi^{i}_{0})]$ represented by $(\dbar^{i}_{0},\Phi^{i}_{0})$ which is polystable. Then $\lim_{t\to 0}[(\mathcal{E},\dbar_{E},t\Phi)]=[\bigoplus_{i=1}^{k}(\mathcal{E}^{i}_{0},\dbar^{i}_{0},\Phi^{i}_{0})]$.
\end{lemma}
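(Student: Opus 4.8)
The plan is to reduce the statement to the continuity of the direct-sum operation on moduli spaces, combined with the elementary fact that the $\C^{*}$-action commutes with direct sums. First I would record that for every $t\in\C^{*}$ the given decomposition is $\C^{*}$-equivariant, $(\dbar_{E},t\Phi)=\bigoplus_{i=1}^{k}(\dbar^{i},t\Phi^{i})$, so that at the level of moduli classes
$t\cdot[(\dbar_{E},\Phi)]=\bigoplus\big(t\cdot[(\dbar^{1},\Phi^{1})],\dots,t\cdot[(\dbar^{k},\Phi^{k})]\big)$,
where $\bigoplus$ denotes the direct-sum map $\prod_{i}\mathcal{M}_{H}(r_{i},0)\to\mathcal{M}_{H}(r,0)$ between the relevant degree-zero moduli spaces. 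The downward limit $\lim_{t\to 0}[(\dbar_{E},t\Phi)]$ exists by properness of the Hitchin fibration, and $\mathcal{M}_{H}$ is Hausdorff, so it only remains to identify this limit. Each factor path $t\mapsto t\cdot[(\dbar^{i},\Phi^{i})]$ extends continuously to $t=0$ with value $[(\dbar^{i}_{0},\Phi^{i}_{0})]$ by hypothesis; hence, once $\bigoplus$ is shown to be continuous, I may pass the limit through it to conclude $\lim_{t\to 0}[(\dbar_{E},t\Phi)]=[\bigoplus_{i}(\dbar^{i}_{0},\Phi^{i}_{0})]$.

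To establish continuity of $\bigoplus$ I would transport the problem to the Betti side via the nonabelian Hodge correspondence. The essential compatibility is that NAH intertwines direct sums: if $h_{i}$ is a harmonic metric for $(\dbar^{i},\Phi^{i})$, then $\bigoplus_{i}h_{i}$ is harmonic for $\bigoplus_{i}(\dbar^{i},\Phi^{i})$, and the associated flat connection (equivalently, the reductive representation of $\pi_{1}(X)$) is the direct sum of the flat connections of the summands. Thus the Dolbeault direct-sum map corresponds under NAH to the block-diagonal direct-sum map $(\rho_{i})\mapsto\bigoplus_{i}\rho_{i}$ on character varieties. The latter is manifestly continuous, being a morphism of affine GIT quotients; this is already visible from the fact that the generating trace functions satisfy $\tr\big((\bigoplus_{i}\rho_{i})(\gamma)\big)=\sum_{i}\tr(\rho_{i}(\gamma))$. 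Since NAH is a homeomorphism, continuity of $\bigoplus$ on the Dolbeault side follows.

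Finally I would assemble the argument by applying NAH to the $\C^{*}$-equivariant identity of the first step: because NAH is a homeomorphism it commutes with the (existing) limits as $t\to 0$ on both sides, so $\mathrm{NAH}$ of the downward limit of $(\dbar_{E},t\Phi)$ equals the Betti direct sum of the $\mathrm{NAH}$-images of the summand limits, and applying $\mathrm{NAH}^{-1}$ yields the claim. The main obstacle, and the reason for routing through the Betti side, is precisely the interaction between the Dolbeault $\C^{*}$-limit and the direct-sum operation: a purely gauge-theoretic argument would require lifting convergence in the quotient $(\mathcal{B}^{ps}\cap\mu^{-1}(0))/\mathcal{G}$ to convergence of harmonic representatives up to unitary gauge, which is delicate near the strictly polystable (singular) limit points. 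Passing to the character variety, where the direct sum is algebraic and continuity is immediate, sidesteps this difficulty. A secondary bookkeeping point is to keep track of the fact that the individual summands live in the degree-zero, non-$\mathrm{SL}$ moduli spaces $\mathcal{M}_{H}(r_{i},0)$, whereas their total direct sum has trivial determinant and therefore lies in the $\mathrm{SL}$-moduli space $\mathcal{M}_{H}$.
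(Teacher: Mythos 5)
Your argument is correct, but it takes a genuinely different route from the paper. The paper stays entirely on the Dolbeault side and works locally: around each limit point $[(\dbar^{i}_{0},\Phi^{i}_{0})]$ it invokes the local Kuranishi model of Proposition \ref{proposition_yue_local_model}, produces representatives $x_i^{t}$ of $[(\dbar^{i},t\Phi^{i})]$ in shrinking balls $U_i\subset\mathcal{H}^1$, replaces them by norm-minimizing points $y_i^{t_n}$ on closed $G_i$-orbits so that $\theta(y_i^{t_n})\to(\dbar^{i}_{0},\Phi^{i}_{0})$ in the configuration space, and then observes that $(\dbar_E,t_n\Phi)$ is $\mathrm{SL}$-gauge equivalent to $\bigoplus_i\theta(y_i^{t_n})$; convergence in $\mathcal{M}_H$ follows by projecting, and the existence of the full limit upgrades the subsequential statement. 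You instead isolate the single claim that the direct-sum map $\prod_i\mathcal{M}_H(r_i,0)\to\mathcal{M}_H(r,0)$ is continuous and prove it by transporting to the Betti side, where the block-diagonal map is a morphism of affine GIT quotients. This is clean and proves a stronger, reusable statement (global continuity of $\bigoplus$, not just along downward flows), and it avoids the delicate choice of representatives near the singular limit. The trade-offs: you import the full Dolbeault--Betti homeomorphism of Simpson, a heavier input than the paper uses anywhere (the paper's stated NAH is only the homeomorphism with $(\mathcal{B}^{ps}\cap\mu^{-1}(0))/\mathcal{G}$), and you rely on standard but uncited compatibilities (NAH intertwines direct sums via block-diagonal harmonic metrics; the $\mathrm{SL}$ moduli space sits as a closed subspace of the degree-zero $\mathrm{GL}$ moduli space, so convergence there restricts correctly). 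Note that your continuity argument could even be run without the Betti detour, using the paper's own model $(\mathcal{B}^{ps}\cap\mu^{-1}(0))/\mathcal{G}$ and the universal property of the quotient topology, since direct sums of Hitchin-equation solutions are solutions; either way the proposal is sound.
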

\begin{proof}
For each $i$, using Proposition \ref{proposition_shb_block}, we put an SHB structure on $(\dbar^{i}_{0},\Phi^{i}_{0})$ in the form \eqref{equation_hodge_summand_decomposition} and fix a harmonic metric $h_i$. Let $G_i$ be the automorphism group of $(\dbar^{i}_{0},\Phi^{i}_{0})$, $U_i$ a small ball around $0$ in $\mathcal{H}^{1}$ of the deformation complex of $(\dbar^{i}_{0},\Phi^{i}_{0})$ as in Proposition \ref{proposition_yue_local_model}, $\mathcal{Z}_{i}$ the Kuranishi space and let $$\pi_{i}: G_i \cdot (\mathcal{Z}_{i}\cap U_i)\git G_i\to \mathcal{M}_{H}(r_i,0)$$
be the local Kuranishi model into the moduli space of semistable Higgs bundles of rank $r_i$ and degree zero, where $r_i=\text{rk}(\mathcal{E}^{i})$. For small $|t|$, every $[(\dbar^{i},t\Phi^{i})]$ has representatives $x_i^{t}$ in $U_i$, i.e. $\pi_{i}([x_i^{t}])=[(\dbar^{i},t\Phi^{i})]$. Shrinking the radius of each $U_i$ to zero, we see that there exists a sequence $t_n\to 0$ such that $x_{i}^{t_n}\to 0$ for all $i$. For each $x_{i}^{t_n}$, choose $y_{i}^{t_n}\in \overline{G_i\cdot x_{i}^{t_n}}$ which minimizes the $h_i$-norm along $\overline{G_i\cdot x_{i}^{t_n}}$. Then $y_{i}^{t_n}\in U_i$, $y_i^{t_n}\to 0$ as $n\to 0$ and the $G_i$-orbit of each $y_{i}^{t_n}$ is closed. By Proposition \ref{proposition_yue_local_model}, $\theta(y_{i}^{t_{n}})$ is gauge-equivalent to $(\dbar^{i},t_n\Phi^{i})$ (for convenience, here we use the same notation $\theta$ to denote the inverses of the Kuranishi maps with respect to all $(\dbar^{i}_0,\Phi^{i}_0)$). It follows that $(\dbar_{E},t_n \Phi)$ is gauge equivalent to $\bigoplus_{i=1}^{k} \theta(y_i^{t_n})$. For each $n$, we may take the gauge transformations to lie in $\text{SL}$. Hence, $\lim_{n\to 0}[(\dbar_{E},t_{n}\Phi)]=[\bigoplus_{i=1}^{k} (\dbar^{i}_{0},\Phi^{i}_{0})]$. Since $\lim_{t\to 0}[(\dbar_{E},t\Phi)]$ exists, the result follows.
\end{proof}

\subsection{The deformation complex at a polystable SHB}\label{subsection_SHB_deformation}

Now, suppose $[(\dbar_{E},\Phi)]$ is a $\C^{*}$-fixed point, with $(\dbar_{E},\Phi)$ polystable. Let $G$ be its automorphism group. We will specify an SHB structure on $(\dbar_{E},\Phi)$ in the following manner. By Proposition \ref{proposition_shb_block}, there exists a holomorphic decomposition into $k$ stable summands
    \begin{equation}\label{equation_stable_summand_decomposition}
(E,\dbar_{E}) =\bigoplus_{i=1}^{k}(E^{i}, \dbar^{i}) \quad \text{ and }\quad \Phi=\sum_{i=1}^{k}\Phi^{i},        
    \end{equation}
where $\Phi^{i}\in H^{0}(\text{End}(E^{i})\otimes K_{X})$, such that each $(\dbar^{i},\Phi^{i})$ is a stable Higgs bundle of degree zero representing a $\C^{*}$-fixed point. Each $(\dbar^{i},\Phi^{i})$ admits a unique SHB structure given by
    \begin{equation}
(E^{i},\dbar^{i})=\bigoplus_{a=1}^{\ell_{i}}(E^{i}_{a},\dbar^{i}_{a}) \quad\text{ and } \quad\Phi^{i}=\sum_{a=1}^{\ell_i-1}\Phi^{i}_{a}       
    \end{equation}
where $\Phi^{i}_{a}\in H^{0}(\text{Hom}(E^{i}_{a},E^{i}_{a+1})\otimes K_{X})$. Let
    \begin{equation}\label{equation_hodge_summand_decomposition}
(E_{a},\dbar_{a}):=\bigoplus_{i} (E^{i}_{a},\dbar^{i}_{a}) \quad \text{and} \quad \Phi_{a}:=\sum_{i} \Phi^{i}_{a}
    \end{equation}
so that $\Phi_{a}\in H^{0}(\text{Hom}(E_{a},E_{a+1})\otimes K_{X})$. The direct sum $(E,\dbar_{E})=\bigoplus_{a=1}^{\ell}(E_{a}, \dbar_{a})$, where $\ell=\max_{i}\{\ell_{i}\}$. \eqref{equation_hodge_summand_decomposition} then defines an SHB structure on $(\dbar_{E},\Phi)$. We can take a harmonic metric $h$ of $(\dbar_{E},\Phi)$ such that the summands $E^{i}$ and $E^{j}$ are mutually orthogonal with respect to $h$. Thus, $E_{a}$ and $E_{b}$ are mutually orthogonal. Henceforth, this is be the SHB structure we will refer to. We remark that the resulting Hodge summands \eqref{equation_hodge_summand_decomposition} are independent of the choice of the stable summand decomposition \eqref{equation_stable_summand_decomposition}.

Next, we recall the deformation complex \eqref{equation_deformation_complex}. As in \cite{CW19}, we introduce a grading on $\text{End}(E)$, whose $\ell$-th component is given by
    \begin{equation}
\text{End}_{\ell}(E):=\bigoplus_{i-j=\ell} \text{Hom}(E_{i},E_{j}).    
    \end{equation}
There is a $\C^{*}$-action on $C^{\bullet}$ defined in the following way. Let $w\in \Omega^{0}(\sllie(E))$, $(\beta,\varphi)\in \Omega^{0,1}(\sllie(E))\oplus\Omega^{1,0}(\sllie(E))$ and $v\in \Omega^{1,1}(\sllie(E))$. We will add a subscript $j$ to denote the $j$-th graded component with respect to the grading on $\text{End}(E)$. For $t\in \C^{*}$, the action is given by
    \begin{align}\label{equation_C*_action_forms}
        t\cdot w &:= \sum_{j} t^{j}w_j\\
        t\cdot (\beta,\varphi) &:= \bigg( \sum_{j} t^{j}\beta_{j},\sum_{j}t^{j+1}\varphi_{j} \bigg) \\
        t\cdot v &:= \sum_{j} t^{j+1} v_{j}.
    \end{align}

It is a direct computation to verify that $D''$ and $(D'')^{*}$ are $\C^{*}$-equivariant. The deformation complex is then decomposed into subcomplexes
    \begin{equation}
(C^{\bullet})_{j}: \Omega^{0}(\sllie(E)_{j})\xrightarrow{D''} \Omega^{0,1}(\sllie(E)_{j})\oplus \Omega^{1,0}(\sllie(E)_{j-1})\xrightarrow{D''} \Omega^{1,1}(\sllie(E)_{j-1}).
    \end{equation}
The space of harmonic forms are decomposed accordingly: $\mathcal{H}^{i} =\bigoplus_{j} (\mathcal{H}^{i})_{j}$ where $(\mathcal{H}^{i})_{j}=\mathcal{H}^{i}((C^{\bullet})_{j})$.

\begin{lemma}\label{lemma_h0_blockdiagonal}
\begin{enumerate}[label=(\roman*)]
    \item
$\mathcal{H}^{0}=(\mathcal{H}^{0})_{0}$ and $\mathcal{H}^{2}=(\mathcal{H}^{2})_{1}$.
    \item
The automorphism group $G$ acts by graded degree zero.
\end{enumerate}

\end{lemma}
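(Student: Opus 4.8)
The plan is to prove both parts simultaneously by establishing a single structural fact: every element of $\mathcal{H}^0$ and every element of $G$ is block-diagonal with respect to the Hodge grading $E=\bigoplus_a E_a$, i.e.\ preserves each $E_a$. Recall from the excerpt that $\mathcal{H}^0=\ker D''$ in degree $0$ is the Lie algebra of $G$, so $\mathcal{H}^0=\{w\in\Omega^0(\sllie(E)):\dbar_E w=0,\ [\Phi,w]=0\}$, and $G$ is the group of holomorphic automorphisms of $(\dbar_E,\Phi)$ (with the determinant constraint). Using the stable decomposition \eqref{equation_stable_summand_decomposition} from Proposition \ref{proposition_shb_block}, I would write any such $w$ in blocks $w^{ij}\colon E^i\to E^j$; the two conditions $\dbar_E w=0$ and $[\Phi,w]=0$ say exactly that each $w^{ij}$ is a morphism of Higgs bundles $(\dbar^i,\Phi^i)\to(\dbar^j,\Phi^j)$. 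Since these summands are stable of the same slope $0$, stability (Schur's lemma) forces each $w^{ij}$ to be either $0$ or a scalar multiple of an isomorphism, and the same applies to the blocks of any $g\in G$.

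The heart of the argument is to show that any such isomorphism $f\colon(\dbar^i,\Phi^i)\to(\dbar^j,\Phi^j)$ respects the Hodge grading exactly, i.e.\ $f(E^i_a)=E^j_a$ for all $a$. For this I would use the canonical one-parameter subgroups $\rho^i(t)=\bigoplus_a t^a\operatorname{id}_{E^i_a}$ attached to the SHB structures, which satisfy $\rho^i(t)\,\Phi^i\,\rho^i(t)^{-1}=t\,\Phi^i$ and similarly for $j$. A direct computation shows that $h_t:=\rho^j(t)^{-1}f\rho^i(t)f^{-1}$ is a holomorphic automorphism of $(\dbar^j,\Phi^j)$; by stability it is a scalar $\mu(t)\operatorname{id}$, and $t\mapsto\mu(t)$ is a character, hence $\mu(t)=t^c$ for some $c\in\Z$. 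Consequently $f$ carries the $t^a$-eigenspace $E^i_a$ of $\rho^i(t)$ into the $t^{a-c}$-eigenspace $E^j_{a-c}$ of $\rho^j(t)$. This is the step I expect to be the main obstacle, and it is exactly where the chosen SHB structure in \eqref{equation_hodge_summand_decomposition} is used: because every stable summand is indexed by $a=1,\dots,\ell_i$ with all Hodge pieces nonzero, the isomorphism $f$ must send the index range $\{1,\dots,\ell_i\}$ bijectively onto $\{1,\dots,\ell_j\}$, which forces $\ell_i=\ell_j$ and $c=0$. Thus $f(E^i_a)=E^j_a$.

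Granting this, each nonzero block $w^{ij}$ satisfies $w^{ij}(E^i_a)\subseteq E^j_a$, so any $w\in\mathcal{H}^0$, and likewise any $g\in G$, preserves $E_a=\bigoplus_i E^i_a$; that is, $w$ has graded degree $0$ and $G$ acts by graded degree $0$ with respect to \eqref{equation_C*_action_forms}. This yields (ii) together with the identity $\mathcal{H}^0=(\mathcal{H}^0)_0$ in (i). For the remaining claim $\mathcal{H}^2=(\mathcal{H}^2)_1$, I would invoke the Serre-duality isomorphism $\mathcal{H}^2\cong(\mathcal{H}^0)^\vee$ and track weights under the $\C^*$-action: the pairing $\int_X\tr(w\,v)$ realizing this duality, between $w\in(\mathcal{H}^0)_j$ and $v\in(\mathcal{H}^2)_m$, is nonzero only when the underlying endomorphism grades cancel, namely $j+(m-1)=0$, i.e.\ $m=1-j$. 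Since $\mathcal{H}^0$ is concentrated in weight $j=0$ and the pairing is perfect, $\mathcal{H}^2$ must be concentrated in weight $m=1$, so $\mathcal{H}^2=(\mathcal{H}^2)_1$, completing the proof.
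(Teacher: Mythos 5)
Your proof is correct and follows essentially the same route as the paper: decompose elements of $\mathcal{H}^0$ (and of $G$) into Hom-blocks between the stable summands, apply Schur's lemma, observe that any isomorphism between stable summands must match up their normalized Hodge gradings, and deduce $\mathcal{H}^2=(\mathcal{H}^2)_1$ by Serre duality. The only difference is that you make explicit the one-parameter-subgroup argument forcing the shift $c=0$ and the weight bookkeeping in the duality pairing, both of which the paper leaves implicit; these details check out.
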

\begin{proof}
Elements of $\mathcal{H}^{0}$ are endomorphisms of $(\dbar_{E},\Phi)$. Decompose them into $\text{Hom}((\dbar^{i},\Phi^{i}),(\dbar^{j},\Phi^{j}))$. Notice that $\text{Hom}((\dbar^{i},\Phi^{i}),(\dbar^{j},\Phi^{j}))$ is trivial when $(\dbar^{i},\Phi^{i})$ and $(\dbar^{j},\Phi^{j})$ are not equivalent and is one-dimensional otherwise. In the latter case, the non-zero elements preserve the SHB structures. Putting them back together, we see that elements in $\mathcal{H}^{0}$ preserve each $E_{i}$. This proves (ii) and the case for $\mathcal{H}^{0}$ in (i). The case for $\mathcal{H}^{2}$ follows from duality.
\end{proof}

Now, let $C^{\bullet}_{+}:=\bigoplus_{j>0}(C^{\bullet})_{j}$ and $\mathcal{H}^{1}_{+}:=\mathcal{H}^{1}(C^{\bullet}_{+})=\bigoplus_{j>0}(\mathcal{H}^{1})_{j}$. The computation in Lemma 3.6 of \cite{CW19} together with (i) of Lemma \ref{lemma_h0_blockdiagonal} prove the following

\begin{proposition}\label{proposition_dim}
$\chi(C^{\bullet}_{+})=\dim\mathcal{H}^{2}-\dim \mathcal{H}^{1}_{+}=-(r^2-1)(g-1)=-\dfrac{1}{2}\dim \mathcal{M}_{H}$.
\end{proposition}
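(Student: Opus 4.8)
The plan is to separate the two assertions. The first equality $\chi(C^{\bullet}_{+})=\dim\mathcal{H}^{2}-\dim\mathcal{H}^{1}_{+}$ is a formal consequence of Lemma \ref{lemma_h0_blockdiagonal}(i), while the numerical value is obtained by combining a Riemann--Roch computation for the full deformation complex with a duality symmetry between its graded pieces. For the first equality I would unwind $\chi(C^{\bullet}_{+})=\dim\mathcal{H}^{0}(C^{\bullet}_{+})-\dim\mathcal{H}^{1}(C^{\bullet}_{+})+\dim\mathcal{H}^{2}(C^{\bullet}_{+})$. By Lemma \ref{lemma_h0_blockdiagonal}(i) one has $\mathcal{H}^{0}=(\mathcal{H}^{0})_{0}$ and $\mathcal{H}^{2}=(\mathcal{H}^{2})_{1}$, so $(\mathcal{H}^{0})_{j}=0$ for every $j>0$ and $(\mathcal{H}^{2})_{j}=0$ for $j\neq 1$. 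Passing to the positive part $C^{\bullet}_{+}=\bigoplus_{j>0}(C^{\bullet})_{j}$ therefore kills all of $\mathcal{H}^{0}$ and retains all of $\mathcal{H}^{2}$, giving $\mathcal{H}^{0}(C^{\bullet}_{+})=0$ and $\mathcal{H}^{2}(C^{\bullet}_{+})=\mathcal{H}^{2}$, whence the first equality at once.

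Next I would compute the Euler characteristic of the full complex, which is a topological quantity insensitive to stability. The deformation complex $C^{\bullet}$ is the Dolbeault resolution of the two-term complex of sheaves $\sllie(E)\xrightarrow{[\,\cdot\,,\Phi]}\sllie(E)\otimes K_{X}$, so $\chi(C^{\bullet})=\chi(\sllie(E))-\chi(\sllie(E)\otimes K_{X})$. Since $\sllie(E)$ has rank $r^{2}-1$ and degree $0$, Riemann--Roch gives $\chi(\sllie(E))=(r^{2}-1)(1-g)$ and $\chi(\sllie(E)\otimes K_{X})=(r^{2}-1)(g-1)$, so that $\chi(C^{\bullet})=-2(r^{2}-1)(g-1)=-\dim\mathcal{M}_{H}$. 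It then remains to establish $\chi(C^{\bullet}_{+})=\tfrac12\chi(C^{\bullet})$.

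The heart of the argument is a duality symmetry among the graded pieces. Writing $\chi(C^{\bullet})=\sum_{j}\chi((C^{\bullet})_{j})$, I would show that $\chi((C^{\bullet})_{j})=\chi((C^{\bullet})_{1-j})$ for every $j$. This comes from Serre duality for the deformation complex: the holomorphic symplectic pairing \eqref{equation_holomorphic_symplectic_form} identifies $\mathcal{H}^{i}((C^{\bullet})_{j})$ with $\mathcal{H}^{2-i}((C^{\bullet})_{1-j})^{\vee}$, since the trace form pairs $\sllie(E)_{\ell}$ with $\sllie(E)_{-\ell}$ while the weight bookkeeping in \eqref{equation_C*_action_forms} shifts the $\varphi$-grading by one, sending weight $j$ to weight $1-j$. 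As the alternating sum $\dim\mathcal{H}^{0}-\dim\mathcal{H}^{1}+\dim\mathcal{H}^{2}$ is invariant under interchanging $\mathcal{H}^{0}$ with $\mathcal{H}^{2}$, the equality of Euler characteristics follows. The involution $j\mapsto 1-j$ carries $\{j>0\}$ bijectively onto $\{j\leq 0\}$, so $\chi(C^{\bullet}_{+})=\chi\!\big(\bigoplus_{j\leq 0}(C^{\bullet})_{j}\big)=\tfrac12\chi(C^{\bullet})=-(r^{2}-1)(g-1)=-\tfrac12\dim\mathcal{M}_{H}$. Equivalently, one may check that the grading-compatible computation of Lemma 3.6 in \cite{CW19}, which never invokes stability, applies verbatim to our polystable SHB.

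I expect the main obstacle to be making the duality symmetry precise at the level of graded subcomplexes: one must verify that Serre duality for $C^{\bullet}$ is compatible with the $\C^{*}$-grading exactly as the weight assignments in \eqref{equation_C*_action_forms} predict, so that the weight-$j$ piece is dual to the weight-$(1-j)$ piece rather than the weight-$(-j)$ piece. Tracking the extra $+1$ shift arising from the $K_{X}$-twist---equivalently, from the $t^{j+1}$ appearing in the action on $\varphi$ and on $\Omega^{1,1}(\sllie(E))$---against the sign reversal coming from $\sllie(E)_{\ell}^{\vee}\cong\sllie(E)_{-\ell}$ is the one place where a careless index would flip the pairing and destroy the halving.
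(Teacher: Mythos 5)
Your proof is correct and follows essentially the same route as the paper, which simply cites the computation of Lemma 3.6 in \cite{CW19} together with part (i) of Lemma \ref{lemma_h0_blockdiagonal}; your graded Riemann--Roch plus Serre-duality argument (pairing weight $j$ with weight $1-j$, with the $+1$ shift tracked correctly) is exactly that computation written out, and your reduction of the first equality via $\mathcal{H}^{0}(C^{\bullet}_{+})=0$ and $\mathcal{H}^{2}(C^{\bullet}_{+})=\mathcal{H}^{2}$ is the intended use of Lemma \ref{lemma_h0_blockdiagonal}(i).
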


\subsection{Model for the central locus}

Associated with the polystable SHB $(\dbar_{E},\Phi)$, we recall the Kuranishi map \eqref{kuranishi_map} $$\kappa (\beta,\varphi)  = (\beta,\varphi)+(D'')^{*}\Gamma([\beta,\varphi]).$$
In addition to the $G$-equivariance, $\kappa$ is also $\C^{*}$-equivariant with respect to the $\C^{*}$-action on $\Omega^{0,1}(\sllie(E))\oplus \Omega^{1,0}(\sllie(E))$ defined in the previous subsection. Moreover, the two actions commute with each other.

Define the space
    \begin{equation}\label{equation_S+}
S_{+}=S_{+}(\dbar_{E},\Phi):= C^{1}_{+}\cap Z=C^{1}_{+}\cap\{(\beta,\varphi): D''(\beta,\varphi)+[\beta,\varphi]=D'(\beta,\varphi)=0\},
    \end{equation}
which is a closed complex subspace of $Z$. By examining the graded pieces, we see that the restriction of $\kappa$ to $S_{+}$ takes values in $\mathcal{H}^{1}_{+}$. Also, note that both $S_+$ and $\mathcal{H}^{1}_{+}$ are preserved by $G\times \C^{*}$. In \cite{CW19}, where $(\dbar_{E},\Phi)$ is assumed to be stable, $S_+$ is referred to as the BB-\emph{slice}. In that case, the image of $S_+$ in $\mathcal{M}_{H}$ is the upward flow through $[(\dbar_{E},\Phi)]$. However, when $(\dbar_{E},\Phi)$ is only assumed to be polystable, $S_+$ almost never captures the full upward flow. In any case, $\kappa$ gives a biholomorphism between the $S_{+}$ and $\mathcal{H}^{1}_{+}$.

\begin{lemma}
The restriction $\kappa: S_{+}\to \mathcal{H}^{1}_{+}$ is a $G$- and $\C^{*}$-equivariant biholomorphism.
\end{lemma}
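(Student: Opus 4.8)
The plan is to upgrade the local biholomorphism $\kappa\colon \tilde{U}\to U$ at the origin to a global statement on $S_{+}$ by exploiting the contracting nature of the $\C^{*}$-action on the positively graded pieces. Equivariance of $\kappa$ under $G$ and $\C^{*}$ is already in hand, so $G$-equivariance of the restriction is automatic, and the fact that $\kappa(S_{+})\subseteq \mathcal{H}^{1}_{+}$ has been noted above. What remains is bijectivity of $\kappa|_{S_{+}}$ together with holomorphicity of its inverse.

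First I would record the two weight computations that drive the argument. On the degree-one part of $(C^{\bullet})_{j}$, namely $\Omega^{0,1}(\sllie(E)_{j})\oplus \Omega^{1,0}(\sllie(E)_{j-1})$, both the $\beta$- and $\varphi$-components scale by $t^{j}$ with $j\geq 1$; hence $t\cdot u\to 0$ as $t\to 0$ for every $u\in C^{1}_{+}$, and likewise on $\mathcal{H}^{1}_{+}$. Second, I would check that $k$ vanishes identically on $\mathcal{H}^{1}_{+}$, so that $\mathcal{H}^{1}_{+}\subseteq \mathcal{Z}$. This is a weight count: $k(x)=\tfrac{1}{2}P[x,x]$ is $\C^{*}$-equivariant, while $\mathcal{H}^{2}=(\mathcal{H}^{2})_{1}$ is of pure weight $1$ by Lemma \ref{lemma_h0_blockdiagonal}(i); since the bracket is weight-additive, $[x,x]$ for $x\in\mathcal{H}^{1}_{+}$ carries only weights $\geq 2$, forcing $P[x,x]=0$. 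This second point is what guarantees that preimages under $\kappa^{-1}$ land not merely in $\tilde{Z}$ but in $Z$.

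With these in place I would construct the inverse explicitly. Given $y\in \mathcal{H}^{1}_{+}$, choose $t$ with $|t|$ small enough that $t\cdot y\in U$ (possible since $t\cdot y\to 0$ as $t\to 0$), and set $\psi(y):=t^{-1}\cdot \kappa^{-1}(t\cdot y)$. The contraction argument shows that $w_{t}:=\kappa^{-1}(t\cdot y)$ lies in $C^{1}_{+}$: by $\C^{*}$-equivariance of $\kappa^{-1}$ on $U$ one has $s\cdot w_{t}=\kappa^{-1}(st\cdot y)\to 0$ as $s\to 0$, and writing $s\cdot w_{t}=\sum_{j} s^{j}(w_{t})_{j}$ this forces $(w_{t})_{j}=0$ for all $j\leq 0$. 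Combined with $t\cdot y\in \mathcal{H}^{1}_{+}\subseteq \mathcal{Z}$, which yields $k(\kappa(w_{t}))=0$ and hence $w_{t}\in Z$, we obtain $w_{t}\in C^{1}_{+}\cap Z=S_{+}$. Independence of $\psi(y)$ from the choice of $t$, together with $\kappa\circ\psi=\Id$ and $\psi\circ\kappa=\Id$, then all follow from $\C^{*}$-equivariance, from $\kappa^{-1}\circ\kappa=\Id$ on $\tilde{U}$, and from the fact that $t\cdot w\in \tilde{U}$ for $w\in S_{+}$ and $|t|$ small. Holomorphicity of $\psi$ is immediate, since locally it equals $t^{-1}\cdot \kappa^{-1}(t\cdot(\,\cdot\,))$ for a fixed $t$.

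The main obstacle is conceptual rather than computational: $\kappa$ is only known to be invertible on the small neighborhood $\tilde{U}$, whereas $S_{+}$ and $\mathcal{H}^{1}_{+}$ are global, non-compact spaces, so one cannot invert $\kappa$ directly away from the origin. The device that resolves this is the strict positivity of the grading, which makes the $\C^{*}$-action contract all of $S_{+}$ and $\mathcal{H}^{1}_{+}$ into $U$; the two points requiring care are that the rescaling $t^{-1}\cdot \kappa^{-1}(t\cdot y)$ be genuinely independent of $t$, and that the preimages remain in the cut-out locus $Z$ rather than only in the ambient $\tilde{Z}$ --- the latter being exactly what $k|_{\mathcal{H}^{1}_{+}}\equiv 0$ secures.
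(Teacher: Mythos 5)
Your proof is correct, but it takes a genuinely different route from the paper's. The paper never invokes the local invertibility of $\kappa$ near the origin: it constructs the inverse globally and explicitly, by solving $\kappa(\beta,\varphi)=(\alpha,\upsilon)$ degree by degree. Since $[\beta_a,\varphi_b]$ has graded degree $a+b$, the lowest piece gives $(\beta_1,\varphi_0)=(\alpha_1,\upsilon_0)$, and then $(\beta_{j+1},\varphi_j)=(\alpha_{j+1},\upsilon_j)-(D'')^{*}\Gamma\bigl(\sum_{a+b=j}[\beta_a,\varphi_b]\bigr)$ determines all higher pieces recursively; membership in $S_+$ is then checked directly (with $P([\beta,\varphi])=0$ following from the same weight count you use to show $k|_{\mathcal{H}^1_+}\equiv 0$, via Lemma on $\mathcal{H}^2=(\mathcal{H}^2)_1$). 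You instead bootstrap the local biholomorphism $\kappa|_{\tilde U}$ from the Kuranishi construction to a global one by contracting with the $\C^{*}$-action, which requires the two extra observations you correctly isolate: $\mathcal{H}^1_+\subseteq\mathcal{Z}$ (so preimages land in $Z$, not just $\tilde Z$) and the vanishing of all non-positive graded components of $\kappa^{-1}(t\cdot y)$ (so preimages land in $C^1_+$). The trade-off: the paper's induction is self-contained, yields an explicit recursive formula for the inverse, and makes injectivity, surjectivity and holomorphicity manifest in one stroke; your argument is softer and reuses the local model, at the cost of the routine but slightly fussy verifications of $t$-independence and of injectivity of $\kappa|_{S_+}$ (the latter does follow, as you indicate, by contracting two putative preimages into $\tilde U$). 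Both are valid; the paper's version also has the minor advantage of not depending on how small $\tilde U$ was chosen.
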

\begin{proof}
We claim that for $x=(\alpha,\upsilon)\in \mathcal{H}^{1}_{+}$, there exists a unique $(\beta,\varphi)\in S_{+}$ with $\kappa(\beta,\varphi)=x$. It amounts to solving
    \begin{equation}\label{equation_graded_kuranishi}
(\alpha,\upsilon)=(\sum_{j>0}\beta_j,\sum_{\ell\geq 0}\varphi_{\ell})+iD'\Lambda\Gamma\bigg(\sum_{j,\ell}[\beta_j,\varphi_{\ell}]\bigg).
    \end{equation}
As each $[\beta_j,\varphi_{\ell}]$ has graded degree $j+\ell$, we see that in particular $(\beta_1,\varphi_0)=(\alpha_1,\upsilon_0)$. Next, we solve inductively,
    \begin{equation}\label{equation_kuranishi_inverse}
(\beta_{j+1},\varphi_j)=(\alpha_{j+1},\upsilon_j)-(D'')^{*}\Gamma\bigg(\sum_{a+b=j} [\beta_a,\varphi_b]\bigg).
    \end{equation}
Putting these graded pieces together determine the solution $(\beta,\varphi)$ uniquely.

Lastly, we check that $(\beta,\varphi)\in S_{+}$. Since $D'x=0$, from \eqref{equation_graded_kuranishi} we have $D'(\beta,\varphi)=0$. Similarly, we have $0=D''x=D''(\beta,\varphi)+(1-P)([\beta,\varphi])=D''(\beta,\varphi)+[\beta,\varphi]-P([\beta,\varphi])$. Since $[\beta,\varphi]$ has no non-negative graded degree pieces, $P([\beta,\varphi])=0$ by Lemma \ref{lemma_h0_blockdiagonal}. Therefore, $(\beta,\varphi)\in S_+$. From \eqref{equation_kuranishi_inverse}, we see that the inverse map is holomorphic.
\end{proof}
Hence, $\kappa$ induces a $\C^{*}$-equivariant analytic isomorphism
    \begin{equation}
\overline{\kappa}: S_{+}\git G\xrightarrow{\cong} \mathcal{H}^{1}_{+}\git G.
    \end{equation}
Note that $\mathcal{H}^{1}_{+}\git G$ here is an affine GIT quotient. At the pre-quotient level, the resulting map of the following composition $$S_{+}\to (\dbar_{E},\Phi)+S_{+}\hookrightarrow \mathcal{B}\to \mathcal{B}/\mathcal{G}_{\C}$$
is $\C^{*}$-equivariant. As $(\dbar_{E},\Phi)+S_{+}\cap\tilde{U}$ consists of semistable Higgs bundles for small open $\tilde{U}$ containing $0$, the $\C^{*}$-equivariance implies that we have the global map $$S_{+}\longrightarrow \mathcal{M}_{H}.$$
Composing with $\kappa^{-1}$, we also have the map $\mathcal{H}^{1}_{+}\to \mathcal{M}_{H}$. When $(\dbar_{E},\Phi)$ is stable, it is shown in \cite{CW19} that this gives a parametrization of the upward flow through $[(\dbar_{E},\Phi)]$. This is not the case if $(\dbar_{E},\Phi)$ is strictly polystable. Nonetheless, using $\C^{*}$-equivariance again and the local complex analyticity of the map $(S_{+}\cap G\cdot \tilde{U})\git G\hookrightarrow \mathcal{M}_{H}$, we obtain a global injective $\C^{*}$-equivariant complex analytic map $L:S_+\git G\to \mathcal{M}_{H}$ given by
    \begin{equation}
        L([(\beta,\varphi)])=[(\dbar_u,\Phi_u)].
    \end{equation}
We call the image of $L$ the \emph{central locus} of the upward flow through $[(\dbar_{E},\Phi)]$. Composing with $\overline{\kappa}^{-1}$, we then obtain

\begin{proposition}\label{proposition_central_locus}
The map $L\circ \overline{\kappa}$ gives a parametrization of the central locus of the upward flow through the polystable $\C^{*}$-fixed point $[(\dbar_{E},\Phi)]$ by the affine quotient $\mathcal{H}^{1}_{+}\git G$.
\end{proposition}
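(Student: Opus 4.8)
The plan is to observe that, by the point at which this proposition is stated, every ingredient it needs has already been assembled, so that the statement reduces to a formal composition of two maps whose relevant properties are in hand. The preceding lemma provides the $\C^{*}$-equivariant analytic isomorphism $\overline{\kappa}: S_{+}\git G \xrightarrow{\cong} \mathcal{H}^{1}_{+}\git G$, and hence its inverse $\overline{\kappa}^{-1}: \mathcal{H}^{1}_{+}\git G \to S_{+}\git G$, which is again a $\C^{*}$-equivariant analytic isomorphism. The discussion immediately preceding the statement produces the global injective $\C^{*}$-equivariant complex analytic map $L: S_{+}\git G \to \mathcal{M}_{H}$, $L([(\beta,\varphi)]) = [(\dbar_u,\Phi_u)]$, whose image is the central locus by definition. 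To match the stated parameter space $\mathcal{H}^{1}_{+}\git G$, the relevant composite is $L\circ\overline{\kappa}^{-1}: \mathcal{H}^{1}_{+}\git G \to \mathcal{M}_{H}$, in agreement with the sentence ``Composing with $\overline{\kappa}^{-1}$'' that precedes the statement.

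I would then verify in turn the three features required of a parametrization. Injectivity is inherited from the factors, since $\overline{\kappa}^{-1}$ is a bijection and $L$ is injective, so their composite is injective. Complex analyticity passes to a composite of analytic maps, and $\C^{*}$-equivariance passes likewise, because all three spaces carry the $\C^{*}$-actions with respect to which $L$ and $\overline{\kappa}$ were shown to be equivariant. Finally, the image of $L\circ\overline{\kappa}^{-1}$ coincides with the image of $L$, since $\overline{\kappa}^{-1}$ is surjective, and the image of $L$ is the central locus by definition. Hence $L\circ\overline{\kappa}^{-1}$ is an injective $\C^{*}$-equivariant complex analytic map from $\mathcal{H}^{1}_{+}\git G$ onto the central locus, which is precisely the asserted parametrization.

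Because the substantive work --- the biholomorphism $\kappa: S_{+}\to\mathcal{H}^{1}_{+}$ at the level of the representation spaces, its descent to the GIT quotients giving $\overline{\kappa}$, and the globalization of the local Kuranishi embedding of Proposition \ref{proposition_yue_local_model} into the map $L$ via $\C^{*}$-equivariance --- has already been carried out, I do not expect a genuine analytic obstacle to remain in this proposition. The only point requiring care is bookkeeping: keeping straight the direction of $\overline{\kappa}$, and confirming that the $\C^{*}$-action on $\mathcal{H}^{1}_{+}\git G$ used to globalize from a neighborhood of $0$ is exactly the one transported across by $\overline{\kappa}$, so that the central locus is swept out once and the parametrization is globally well defined. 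I would make these identifications explicit rather than leave them implicit.
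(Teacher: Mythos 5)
Your proposal is correct and takes essentially the same route as the paper, which offers no separate proof beyond the preceding construction of $L$ and $\overline{\kappa}$ and the remark ``Composing with $\overline{\kappa}^{-1}$.'' Your observation that the composite must be $L\circ\overline{\kappa}^{-1}$ (so that the domain is $\mathcal{H}^{1}_{+}\git G$), despite the statement reading $L\circ\overline{\kappa}$, correctly identifies a notational slip in the paper rather than a gap in your argument.
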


For further discussion about the central locus, it will be of interest to us to consider certain subloci in $S_{+}$. To define them, we first set up some notation and terminology. Let $\{(\mathcal{E}^{\alpha},\phi^{\alpha})\}$ be the maximal set of pairwise non-equivalent stable Higgs bundles, each gauge equivalent to some $(\dbar^{i},\Phi^{i})$ in \eqref{equation_stable_summand_decomposition}. Let $C$ be the collection containing $n_{\alpha}$ copies of $(\mathcal{E}^{\alpha},\phi^{\alpha})$, where $n_{\alpha}$ is the number of copies of $(\mathcal{E}^{\alpha},\phi^{\alpha})$ in $(\dbar_{E},\Phi)$. We do not distinguish members in $C$ which are isomorphic to each other.

Define a \emph{partition} (of $C$) to be a collection $P=\{P_{\lambda}\}$ of subcollections of $C$ which partitions $C$. Let $P^C=\{C\}$ be the trivial partition. We say $P$ is \emph{proper} if $P\neq P^C$. Each $P_{\lambda}\in P$ defines a polystable Higgs bundle $(\dbar(P_{\lambda}),\Phi(P_{\lambda}))$ obtained from taking the direct sum of members in $P_{\lambda}$. Note that $(\dbar(P_{\lambda}),\Phi(P_{\lambda}))$ represents a $\C^{*}$-fixed point. Now, for each partition $P$, we fix an isomorphism between each factor in each $P_{\lambda}$ and a stable factor of $(\dbar_{E},\Phi)$ so that the resulting holomorphism $\bigoplus_{\lambda}(\dbar(P_\lambda),\Phi(P_{\lambda}))\to (\dbar_{E},\Phi)$ is an isomorphism of Higgs bundles. Endow each $(\dbar(P_{\lambda}),\Phi(P_{\lambda}))$ with the SHB structure laid out in \eqref{equation_stable_summand_decomposition}, and fix a harmonic metric that is compatible with that of $(\dbar_{E},\Phi)$. For each $(\dbar(P_{\lambda}),\Phi(P_{\lambda}))$, the induced $\C^{*}$-action allows one to define the model spaces, denoted by $S_{+}(P_{\lambda})$ and $\mathcal{H}^{1}_{+}(P_{\lambda})\subset \mathcal{H}^{1}(P_{\lambda})$ respectively, as well as the biholomorphism coming from the restricted Kuranishi map $$\kappa(P_{\lambda}): S_{+}(P_{\lambda})\to \mathcal{H}^{1}_{+}(P_{\lambda}).$$ 
Let $S_{+}(P):=\prod_{\lambda} S_{+}(P_{\lambda})$ and $\mathcal{H}^{1}_{+}(P):=\prod_{\lambda} \mathcal{H}^{1}_{+}(P_{\lambda})$. Notice that $S_{+}(P^{C})\cong S_{+}$ and $\mathcal{H}^{1}_{+}(P^{C})\cong \mathcal{H}^{1}_{+}$.

We introduce a partial ordering on partitions as follows. For two distinct partitions $P=\{P_{\lambda}\}$ and $P'=\{P'_{\lambda'}\}$, we write $P>P'$ if we can group members in $P'$ into a disjoint union $P'=\coprod_{\lambda}Q_{\lambda}$ such that for each $\lambda$, the union of members in $Q_{\lambda}$ is $P_{\lambda}$. Given a pair $P>P'$ and a grouping, we have the following $\C^{*}$-equivariant commutative diagram
     \begin{equation}\label{equation_partition_space}
    \begin{tikzcd}
    S_{+}(P') \arrow{r}{\prod_{\lambda'}\kappa(P'_{\lambda'})} \arrow{d} & [3em] \mathcal{H}^{1}_{+}(P') \arrow{d} \arrow{r} & \mathcal{H}^{1}(P') \arrow{d}\\
    S_{+}(P) \arrow{r}{\prod_{\lambda}\kappa(P_{\lambda})} & \mathcal{H}^{1}_{+}(P) \arrow{r} & \mathcal{H}^{1}(P)
    \end{tikzcd}
    \end{equation}
where the vertical arrows are closed embeddings.

\begin{remark}
In particular, since $P^{C}>P$ for any proper partition $P$, we may regard $S_{+}(P)$, $\mathcal{H}^{1}_{+}(P)$ and $\mathcal{H}^{1}(P)$ as closed subspaces of $S_{+}$, $\mathcal{H}^{1}_{+}$ and $\mathcal{H}^{1}$ respectively. In general, as subspaces of $S_{+}$, $\mathcal{H}^{1}_{+}$ and $\mathcal{H}^{1}$ respectively, $S_{+}(P)$, $\mathcal{H}^{1}_{+}(P)$ and $\mathcal{H}^{1}(P)$ are not preserved by $G$ unless $G$ is abelian. When $G$ is abelian and $P=P^{C}$, \eqref{equation_partition_space} is $G$-equivariant in every arrow.
\end{remark}

It is reasonable to expect that the (inverse) Kuranishi map $\theta$ \eqref{equation_kuranishi_inverse_theta} should give a correspondence between $G$-(poly)stability in $\mathcal{H}^{1}$ and (poly)stability of the image Higgs bundles. Indeed, it has been proved to be the case for holomorphic vector bundles in \cite{BS20}. While this does not appear to follow immediately from the construction in \cite{Fan22}, for our purposes it suffices to have a partial correspondence along $\mathcal{H}^{1}_{+}$ as well as imposing the abelian assumption on $G$. The precise formulation is given in the following

\begin{proposition}\label{proposition_stable_vector}
Let $u=(\beta,\varphi)\in S_{+}(\dbar_{E},\Phi)-\{0\}$, and assume $(\dbar_{u},\Phi_u)$ is not gauge equivalent to $(\dbar_{E},\Phi)$.
    \begin{enumerate}[label=(\roman*)]
        \item
Suppose $(\dbar_{u},\Phi_{u})$ is stable. Then $\kappa (u)$ is stable in $\mathcal{H}^{1}_{+}$.
        \item
Suppose $(\dbar_{u},\Phi_{u})$ is strictly polystable and the automorphism group $G$ of $(\dbar_{E},\Phi)$ is abelian. Then $\overline{G\cdot \kappa(u)}$ contains a polystable vector $y$ which lies in $\mathcal{H}^{1}_{+}(P)$ for some proper partition $P$ of $C$.
    \end{enumerate}
\end{proposition}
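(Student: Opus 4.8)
The plan is to transport both statements from the Higgs-bundle side to the linear-representation side, where Fan's local model (Proposition \ref{proposition_yue_local_model}) applies, and then to read off the stability of $\kappa(u)$ from that of $(\dbar_u,\Phi_u)$. The one technical point is that $\kappa(u)$ need not lie in the small ball $U$ on which the local model is valid; this is remedied by the commuting $\C^{*}$-action. Since $\mathcal{H}^{1}_{+}$ carries only positive $\C^{*}$-weights, $t\cdot\kappa(u)\to 0$ as $t\to 0$, so for $|t|$ small we have $x:=t\cdot\kappa(u)=\kappa(t\cdot u)\in\mathcal{Z}\cap U$ (using $\C^{*}$-equivariance of $\kappa$ together with $\mathcal{H}^{1}_{+}\subset\mathcal{Z}$), and then $\theta(x)=(\dbar_{t\cdot u},\Phi_{t\cdot u})$ represents $t\cdot[(\dbar_u,\Phi_u)]=[(\dbar_u,t\Phi_u)]$. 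Because scaling the Higgs field by a nonzero constant preserves the set of $\Phi$-invariant subbundles, $\theta(x)$ is stable (resp.\ strictly polystable) exactly when $(\dbar_u,\Phi_u)$ is. Finally, as the $G$- and $\C^{*}$-actions commute and $t\cdot$ is a linear automorphism, $G\cdot\kappa(u)$ is closed, and $G_{\kappa(u)}$ is finite (resp.\ positive-dimensional), iff the same holds for $x$; so it suffices to analyse $x\in\mathcal{Z}\cap U$.

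For (i) I would argue that stability of $\theta(x)$ forces $x$ to be $G$-stable. Since $\theta=\kappa^{-1}$ is $G$-equivariant, $g\in G_x$ iff $g$ fixes $\theta(x)$, so $G_x=\mathrm{Aut}(\theta(x))\cap G$; as a stable $\mathrm{SL}(r,\C)$-Higgs bundle has automorphism group the finite centre, $G_x$ is finite. For closedness of $G\cdot x$ I would invoke the correspondence underlying the local model between the Kempf--Ness minimisers of the $G$-action and the harmonic metrics of $\theta(x)$ (Fan's construction being modelled on the Yang--Mills--Higgs flow, i.e.\ the gradient flow of the length functional): a stable Higgs bundle carries a harmonic metric unique up to the finite centre, so by the Kempf--Ness theorem $\ell_x$ attains its minimum at an essentially unique point, whence $x$ is $G$-stable. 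Transporting back through the $\C^{*}$-action shows $\kappa(u)$ is $G$-stable; since $\mathcal{H}^{1}_{+}$ is a closed $G$-invariant subspace of $\mathcal{H}^{1}$ (Lemma \ref{lemma_h0_blockdiagonal}), $G$-stability in $\mathcal{H}^{1}_{+}$ and in $\mathcal{H}^{1}$ coincide, proving (i).

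For (ii), $G$ abelian means $(\dbar_E,\Phi)$ is a sum of pairwise non-isomorphic stable factors, so $G$ is a complex torus and the weight formalism of Proposition \ref{prop-convexhull} applies. As $G$ is reductive and $\overline{G\cdot\kappa(u)}$ is a closed $G$-invariant subset of the vector space $\mathcal{H}^{1}_{+}$, it contains a unique closed orbit; let $y$ be a point on it, so $y$ is $G$-polystable. By the local model $y$ corresponds to a polystable Higgs bundle that is $S$-equivalent to the strictly polystable $(\dbar_u,t\Phi_u)$, hence is itself strictly polystable; equivalently, $G_y$ is positive-dimensional, so the effective weights $\Lambda_G(y)$ (see \eqref{equation_effective_weight}) fail to span $(i\mathfrak{k})^{\vee}$ and $C(y)$ is not full-dimensional. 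Now $G$ acts on the block $\mathrm{Hom}(E^{i},E^{i'})$ of $\mathcal{H}^{1}$ through the character $e^{*}_{i'}-e^{*}_{i}$, so $\Lambda_G(y)$ consists of such differences for the pairs $(i,i')$ at which $y$ has a nonzero component. Forming the graph $\Gamma$ on the set of stable factors, with an edge $\{i,i'\}$ whenever this component is nonzero, the span of the corresponding differences has codimension equal to the number of connected components of $\Gamma$ minus one; hence $C(y)$ fails to be full-dimensional precisely when $\Gamma$ is disconnected. Taking $P$ to be the partition of $C$ into the connected components of $\Gamma$ then gives a proper partition, and since $y$ has no components across distinct blocks it lies in $\prod_{\lambda}\mathcal{H}^{1}_{+}(P_{\lambda})=\mathcal{H}^{1}_{+}(P)$, as required.

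The crux, and the one step reaching slightly beyond the single direction quoted from \cite{Fan22}, is the two-way dictionary between $G$-(poly)stability of $x\in\mathcal{Z}\cap U$ and (poly)stability of $\theta(x)$ — and in particular the matching of $\dim G_x$ with $\dim\mathrm{Aut}(\theta(x))$. I would secure this either at the infinitesimal level, identifying the stabiliser algebra $\mathfrak{g}_x$ with $\mathcal{H}^{0}$ of the deformation complex of $\theta(x)$, so that finite (resp.\ positive-dimensional) stabilisers correspond to stable (resp.\ strictly polystable) Higgs bundles, or through the Kempf--Ness/harmonic-metric picture above, which is the more natural route given the flow-theoretic construction of the local model. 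Everything else — the $\C^{*}$-rescaling into $U$ and the convex-geometric identification of the partition — is then routine.
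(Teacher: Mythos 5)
Your reduction to the linear model via the $\C^{*}$-action matches the paper's opening move, and in (i) the identification $G_x=\mathrm{Aut}(\theta(x))\cap G$ correctly disposes of the finiteness of the stabiliser. The genuine gap is in the other half of $G$-stability --- closedness of the orbit --- and, more broadly, in the ``two-way dictionary'' that you yourself flag as the crux but never actually establish. In (i) you derive closedness of $G\cdot x$ from a claimed correspondence between Kempf--Ness minimisers of $\ell_x$ and harmonic metrics of $\theta(x)$; in (ii) you pass from ``$\theta(y)$ strictly polystable'' to ``$G_y$ positive-dimensional''. Neither implication is available here: Proposition \ref{proposition_yue_local_model} supplies only the single direction ``$x$ polystable $\Rightarrow$ $\theta(x)$ polystable'', and the paper remarks explicitly, just before the statement you are proving, that the full correspondence between $G$-(poly)stability in $\mathcal{H}^{1}$ and (poly)stability of the image Higgs bundles does \emph{not} follow from the construction in \cite{Fan22} (it is known for vector bundles by \cite{BS20}); the present proposition is precisely the partial substitute. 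Concretely, $\mathrm{Aut}(\theta(y))$ need not be contained in $G$ --- an automorphism of $(\dbar_{E}+\beta_y,\Phi+\varphi_y)$ has no reason to preserve $\dbar_{E}$ and $\Phi$ separately --- so positive-dimensionality of $\mathrm{Aut}(\theta(y))$ does not transfer to $G_y$, and uniqueness of the harmonic metric up to the centre does not translate into a statement about the finite-dimensional functional $\ell_x$ without first proving the moment-map identification you only allude to.

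The paper's proof is engineered to avoid this converse. For (i) it takes the point $y\in\overline{G\cdot x}$ of minimal norm, which automatically has closed orbit; the available direction of Proposition \ref{proposition_yue_local_model} plus uniqueness of the polystable representative in an $S$-equivalence class give that $\theta(y)$ is stable; the easy inclusion $G_y\subset\mathrm{Aut}(\theta(y))$ then makes $G_y$ finite, so $y$ is $G$-stable, and openness of the stable locus propagates stability back to $x$ and forces $y\in G\cdot x$. For (ii), instead of reading the partition off the weights of $y$, it constructs $P$ from the downward limits of the stable summands of $(\dbar_u,\Phi_u)$ via Lemma \ref{lemma_limit_direct_sum_commute}, exhibits an explicit representative $x_P\in\mathcal{H}^{1}(P)$ with $[\theta(x_P)]=[\theta(\kappa(u))]$, and takes $y$ on the unique closed orbit in $\overline{G\cdot\kappa(u)}\cap\overline{G\cdot x_P}$, which lies in $\mathcal{H}^{1}_{+}(P)$ because that subspace is closed and $G$-invariant. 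Your graph-theoretic identification of the partition from $\Lambda_G(y)$ is an attractive alternative endgame, but it sits entirely downstream of the unproved claim that $y$ fails to be $G$-stable. To repair the proposal you would need either to prove the slice-theoretic statement $\mathrm{Aut}(\theta(y))\subset G$ for small $y$ with closed orbit, or to adopt the paper's minimal-norm and explicit-representative detours.
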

\begin{proof}
Assume that $(\dbar_{u},\Phi_{u})$ is stable. Using the $\C^{*}$-action, we may assume that $x=\kappa (u)\in U$ where $U$ is a sufficiently small ball for which Proposition \ref{proposition_yue_local_model} applies. Take a point $y\in \overline{G\cdot x}$ that is closest to the origin among points in $\overline{G\cdot x}$. In particular, $y\in U$ and $G\cdot y$ is closed. It follows from Proposition \ref{proposition_yue_local_model} that $\theta(y)$ is polystable and lies in the same gauge equivalence class as $(\dbar_{u},\Phi_u)$. Hence, $\theta(y)$ is also stable. It implies that its isotropy group is trivial, and so is its restriction to $G$. As the Kuranishi map $\kappa$ is a $G$-equivariant biholomorphism, $y$ is stable. Since stability is an open condition, $x$ is also stable and $y\in G\cdot x$. This proves (i).

Next, assume $(\dbar_{u},\Phi_{u})$ is strictly polystable and $G$ is abelian. It follows that $\kappa(u)$ is semistable and that the stable factors in \eqref{equation_stable_summand_decomposition} are pairwise non-equivalent. Let $\bigoplus_{i=1}^{k}(\mathcal{E}^{i}_{u},\dbar^{i}_{u},\Phi^{i}_{u})=(\dbar_{u},\Phi_{u})$ be its stable summand decomposition. From Lemma \ref{lemma_limit_direct_sum_commute}, we see that the downward limits of $(\mathcal{E}^{i}_{u},\dbar^{i}_{u},\Phi^{i}_{u})$ define a partition $$P=\{P_{i}:i=1,\cdots,k\},$$
where each $P_{i}$ contains stable summands of the downward limit of $[(\mathcal{E}^{i}_{u},\dbar^{i}_{u},\Phi^{i}_{u})]$. Let $U_i$ and $U$ be sufficiently small open balls around $0$ in $\mathcal{H}^{1}(P_i)$ and $\mathcal{H}^{1}$ respectively in the construction of the local Kuranishi model in Proposition \ref{proposition_yue_local_model}, such that $\prod_{i}U_i\subset U\subset \mathcal{H}^{1}$. Here, the inclusion comes from the remark after \eqref{equation_partition_space}. Without loss of generality, we can assume $[(\mathcal{E}^{i}_{u},\dbar^{i}_{u},\Phi^{i}_{u})]$ has a representative $x_i\in U_i$. Writing $x_P = \sum_i x_i$ and $u_P=\kappa^{-1}(x_P)$, we have $[(\dbar_{u_P},\Phi_{u_P})]=[(\dbar_u,\Phi_u)]$. Thus, $\overline{G\cdot \kappa(u)}\cap \overline{G\cdot x_P}\neq \emptyset$. Pick a point $y$ that lies in the unique closed $G$-orbit of this intersection. As both $\mathcal{H}^{1}(P)$ and $\mathcal{H}^{1}_{+}$ are closed and $G$-invariant, $y\in \mathcal{H}^{1}_{+}(P)$.
\end{proof}

It was shown in \cite{CW19} that the upward flow through a stable $\C^{*}$-fixed point is a complex Lagrangian with respect to the holomorphic structure $I$ of $\mathcal{M}^{s}_{H}$. The following proposition, assuming that the central locus intersects with the stable locus $\mathcal{M}^{s}_{H}$, can be viewed as an analogue to this statement.

\begin{proposition}\label{proposition_central_locus_lagrangian}
Let $u=(\beta,\varphi)\in S_{+}$ such that $(\dbar_u,\Phi_u)$ is stable. Then, $L$ is locally an immersion in a neighborhood of $[(\beta,\varphi)]\in S_+ \git G$ and is Lagrangian. 
\end{proposition}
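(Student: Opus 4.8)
The plan is to verify the two assertions separately: local immersivity first, then the Lagrangian condition. For the immersivity, I would work through the parametrization $L\circ\overline{\kappa}$ of Proposition \ref{proposition_central_locus}, which identifies a neighborhood of $[(\beta,\varphi)]$ in $S_{+}\git G$ with a neighborhood of $[\kappa(u)]$ in the affine quotient $\mathcal{H}^{1}_{+}\git G$. By part (i) of Proposition \ref{proposition_stable_vector}, since $(\dbar_u,\Phi_u)$ is stable, the vector $\kappa(u)$ is $G$-stable in $\mathcal{H}^{1}_{+}$; in particular its isotropy group in $G$ is finite, so near $[\kappa(u)]$ the GIT quotient $\mathcal{H}^{1}_{+}\git G$ is smooth of dimension $\dim\mathcal{H}^{1}_{+}-\dim G$ and the map to $\mathcal{M}_{H}$ lands in the smooth stable locus $\mathcal{M}_{H}^{s}$. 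The key point is then that the composite of $\theta$ with the projection $\mathcal{B}^{ss}\to\mathcal{M}_{H}$ is, on the stable locus, locally the restriction of the standard Kuranishi slice map, which is a local diffeomorphism onto an open subset of $\mathcal{M}_{H}^{s}$; restricting this slice to the $G$-transverse directions inside $\mathcal{H}^{1}_{+}$ shows that $L$ is an immersion at $[(\beta,\varphi)]$.

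For the Lagrangian statement I would first record the dimension count. By Proposition \ref{proposition_dim}, $\dim\mathcal{H}^{1}_{+}-\dim\mathcal{H}^{2}=\dim\mathcal{H}^{0}=\dim G$ (using $\mathcal{H}^{2}\cong(\mathcal{H}^{0})^{\vee}$ and Lemma \ref{lemma_h0_blockdiagonal}), so that
\begin{equation}
\dim_{\C} S_{+}\git G=\dim\mathcal{H}^{1}_{+}-\dim G=\dim\mathcal{H}^{2}=\tfrac{1}{2}\dim_{\C}\mathcal{M}_{H}.
\end{equation}
Thus the image of $L$ has exactly half the dimension of $\mathcal{M}_{H}$, which is the correct dimension for a Lagrangian. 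It remains to show that the $I$-holomorphic symplectic form $\Omega_I$ of \eqref{equation_holomorphic_symplectic_form} pulls back to zero on the central locus. I would verify this at the level of the slice $(\dbar_E,\Phi)+S_{+}$ before quotienting: for two tangent vectors $u_1=(\beta_1,\varphi_1)$ and $u_2=(\beta_2,\varphi_2)$ lying in the tangent space to $S_{+}$, both satisfy $D'u_a=0$ and have only strictly positive graded degree, so I would expand
\begin{equation}
\Omega_I(u_1,u_2)=\int_X \tr(\varphi_1\wedge\beta_2-\varphi_2\wedge\beta_1)
\end{equation}
and use the grading to show that every term pairs a degree-$j$ component with a degree-$(-j)$ component, forcing the integrand to land in $\mathrm{End}_{0}$-free pieces and hence to integrate to zero. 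Concretely, the condition $\beta_a,\varphi_a\in C^1_{+}$ means $\beta_a$ has degrees $\geq 1$ and $\varphi_a$ has degrees $\geq 0$, so the products $\varphi_1\wedge\beta_2$ and $\varphi_2\wedge\beta_1$ have total degree $\geq 1$; since the trace only sees the degree-zero part, these contributions vanish after taking $\tr$ and integrating. This is essentially the same isotropy computation as in the stable case treated in \cite{CW19}, now applied on the slice $S_{+}$.

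The main obstacle I expect is the passage from the isotropy statement on the slice to a genuine Lagrangian statement on the quotient near a point with nontrivial (finite) stabilizer. I would need to check that the tangent space to the image of $L$ at $[(\beta,\varphi)]$ is precisely the image of the tangent space to $S_{+}$ under the differential of the slice-to-moduli map, modulo the infinitesimal gauge directions $\Range D''$, and that $\Omega_I$ descends correctly through this quotient; the nondegeneracy of $\Omega_I$ on $\mathcal{M}_{H}^{s}$ together with the half-dimension count then upgrades isotropy to Lagrangian. Care is required because $S_{+}\git G$ may only be an orbifold near $[(\beta,\varphi)]$, but since stability forces the stabilizer to be finite the quotient is smooth as a complex space in a neighborhood, and $\Omega_I$ is $G$-invariant, so it descends; combining descent, the vanishing computation above, and the dimension equality completes the argument.
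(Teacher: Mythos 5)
Your proposal follows essentially the same route as the paper's proof: Proposition \ref{proposition_stable_vector}(i) for the immersion, the grading on $C^{1}_{+}$ for the isotropy of $\Omega_I$, and Proposition \ref{proposition_dim} for the half-dimension count; your explicit degree bookkeeping ($\beta_a$ in degrees $\geq 1$, $\varphi_a$ in degrees $\geq 0$, the trace annihilating every nonzero-degree block) is exactly the computation the paper leaves implicit when it says the tangent vectors are represented in $C^{1}_{+}$.

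One correction to your dimension count: the identities $\dim\mathcal{H}^{1}_{+}-\dim\mathcal{H}^{2}=\dim\mathcal{H}^{0}$ and $\dim\mathcal{H}^{1}_{+}-\dim G=\dim\mathcal{H}^{2}$ are both false as written (together they would force $\dim\mathcal{H}^{1}_{+}=2\dim G$). The correct chain is
$\dim S_{+}\git G=\dim\mathcal{H}^{1}_{+}-\dim G=\dim\mathcal{H}^{1}_{+}-\dim\mathcal{H}^{2}=\tfrac{1}{2}\dim\mathcal{M}_{H}$,
where the middle equality uses $\dim G=\dim\mathcal{H}^{0}=\dim\mathcal{H}^{2}$ (Serre duality and Lemma \ref{lemma_h0_blockdiagonal}) and the last is Proposition \ref{proposition_dim}. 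Your final conclusion is right and all the needed ingredients are cited, so this is a miswritten chain rather than a gap. As for your worry about orbifold points on $S_{+}\git G$: the paper disposes of it inside the proof of Proposition \ref{proposition_stable_vector}(i) — the isotropy of the $G$-stable vector $\kappa(u)$ is trivial (the central scalars act trivially on $\mathcal{H}^{1}$ to begin with), so $G$ acts freely and properly on $G\cdot(\mathcal{Z}\cap V)$, the quotient is a genuine complex manifold near $[\kappa(u)]$, and $\Omega_I$ descends without any orbifold subtlety.
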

\begin{proof}
Without loss of generality we assume that $\kappa(u)\in \mathcal{H}^{1}_{+}\cap U$, where $U$ is a sufficeintly small ball in the construction of the local Kuranishi model in Proposition \ref{proposition_yue_local_model}. First, we show that $L$ is an immersion at $[(\beta,\varphi)]$. Let $G\cdot (\mathcal{Z}\cap U)^{s}$ be the stable locus of $G\cdot (\mathcal{Z}\cap U)$. As $(\dbar_u, \Phi_u)$ is stable and $\mathcal{M}^{s}_{H}$ is open in $\mathcal{M}_{H}$, by the proof of (i) in Proposition \ref{proposition_stable_vector}, there exists an open neighborhood $V\subset U$ of $\kappa(u)$ where $\theta(x)$ (cf. \eqref{equation_kuranishi_inverse_theta} is stable for $x\in \mathcal{Z}\cap V$. Let $G\cdot (\mathcal{Z}\cap U)^{s}$ be the stable locus of $G\cdot (\mathcal{Z}\cap U)$. We have the $G$-invariant open set $G\cdot (\mathcal{Z}\cap V)\subset G\cdot (\mathcal{Z}\cap U)^{s}$ where $G$ acts freely and properly. Restricted to $\mathcal{H}^{1}_{+}$, we have the closed embedding $\mathcal{H}^{1}_{+}\cap G\cdot V\hookrightarrow G(\mathcal{Z}\cap V)$. At $\kappa(u)$, this gives the inclusions $$\text{Lie}(G)\hookrightarrow T_{\kappa(u)}\mathcal{H}^{1}_{+}\hookrightarrow T_{\kappa(u)}\mathcal{Z}.$$
Quotient by $\text{Lie}(G)$, we have $T_{[(\beta,\varphi)]}S_{+}\git G\hookrightarrow T_{[(\dbar_u,\Phi_u)]}\mathcal{M}^{s}_{H}$. This shows $L$ is locally an immersion around $[(\beta,\varphi)]$.

Let $\overline{V}:=(\mathcal{H}^{1}_{+}\cap G\cdot V)/ G$. Now, take a slice $s_u$ through $\kappa(u)$. Through $\kappa^{-1}|_{s_u}$, tangent vectors of the image of $L\circ \overline{\kappa}(\overline{V})$ are represented by elements in $C^{1}_{+}\subset \Omega^{0,1}(\sllie(E))\oplus \Omega^{1,0}(\sllie(E))$. It follows that $\Omega|_{L\circ\overline{\kappa}(\overline{V})}\equiv 0$. As $\dim \overline{V}=\dim\mathcal{H}^{1}_{+}-\dim G=\dfrac{1}{2}\mathcal{M}_{H}$ by Proposition \ref{proposition_dim}, we see that $L$ is Lagrangian around $\overline{\kappa}^{-1}(\overline{V})$.
\end{proof}

Finally, we show that under Assumption \ref{assumption_I} or Assumption \ref{assumption_II}, the central locus always intersect with $\mathcal{M}^{s}_{H}$.

\begin{theorem}\label{theorem_non_empty}
Suppose $\Phi=0$, or the automorphism group of $(\dbar_{E},\Phi)$ is abelian. Then $(\dbar_E,\Phi)+S_+$ contains a stable Higgs bundle.
\end{theorem}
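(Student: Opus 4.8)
The plan is to reduce the statement to the existence of a single $G$-stable vector in the linear representation $\mathcal{H}^1_+$, and then to verify this under each assumption by a Kempf--Ness/Riemann--Roch analysis. For the reduction, it suffices to produce one $G$-stable $x\in\mathcal{H}^1_+$: since $\kappa\colon S_+\to\mathcal{H}^1_+$ is a $G$-equivariant biholomorphism with $S_+\subset Z$ and $\kappa(Z)\subset\mathcal{Z}$, we have $\mathcal{H}^1_+\subset\mathcal{Z}$, and $\mathcal{H}^1_+$ is $\C^*$-invariant with positive weights. Thus, given such an $x$, the rescaled $x'=t\cdot x$ lies in $\mathcal{Z}\cap U$ for small $|t|$ (with $U$ as in Proposition \ref{proposition_yue_local_model}) and stays $G$-stable because the $\C^*$- and $G$-actions commute. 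Then $x'$ is $G$-polystable with finite isotropy $G_{x'}$, so $\theta(x')=(\dbar_E,\Phi)+\kappa^{-1}(x')$ is polystable by Proposition \ref{proposition_yue_local_model}; identifying $\operatorname{Aut}(\theta(x'))$ with $G_{x'}$ as in the proof of Proposition \ref{proposition_stable_vector}(i), this automorphism group is finite, and a polystable $\text{SL}(r,\C)$-Higgs bundle with finite automorphism group is stable. As $\kappa^{-1}(x')\in S_+$, this produces a stable point of $(\dbar_E,\Phi)+S_+$.

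Under Assumption \ref{assumption_II}, $(\dbar_E,\Phi)=\bigoplus_{i=1}^{k}V^i$ with pairwise non-isomorphic stable summands $V^i$, and $G=\{(\lambda_i)\in(\C^*)^k:\prod_i\lambda_i^{r_i}=1\}$ acts on the $\text{Hom}(V^i,V^j)$-block of $\mathcal{H}^1_+$ with weight $e_j-e_i$. By Proposition \ref{prop-convexhull} I want $v\in\mathcal{H}^1_+$ with $0\in C(v)^\circ$, and for this I would prove the key claim that $\mathcal{H}^1_+(\text{Hom}(V^i,V^j))\neq 0$ for every ordered pair $i\neq j$, so that a generic $v$ realizes every root $e_j-e_i$ as an effective weight. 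Since $V^i\not\cong V^j$, Lemma \ref{lemma_h0_blockdiagonal} and Serre duality give $\mathcal{H}^0=\mathcal{H}^2=0$ for $\text{Hom}(V^i,V^j)$, hence $\dim\mathcal{H}^1(\text{Hom}(V^i,V^j))=2r_ir_j(g-1)>0$; the symplectic form $\Omega_I$ pairs the $\C^*$-weight-$m$ part of $\mathcal{H}^1(\text{Hom}(V^i,V^j))$ with the weight-$(1-m)$ part of $\mathcal{H}^1(\text{Hom}(V^j,V^i))$, and a Riemann--Roch count of the weight-$1$ piece --- arising as a cokernel of $[\Phi,\cdot]$ between the relevant $H^0$'s, where the $K_X$-twist makes the target large for $g\geq 2$ --- shows it is nonzero. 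Granting the claim, the convex hull of the $A_{k-1}$-roots contains $0$ in its interior in $(i\mathfrak{k})^\vee$, so a generic $v$ is $G$-stable and the reduction applies.

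Under Assumption \ref{assumption_I}, $\Phi=0$, the grading on $\text{End}(E)$ is concentrated in degree $0$, so $S_+=\{(0,\varphi)\}$, $\kappa=\operatorname{id}$ on $S_+$, and $S_+\cong\mathcal{H}^1_+=H^0(\sllie(E)\otimes K_X)$. The statement becomes the classical fact that a degree-$0$ polystable bundle carries a stable Higgs field, which I would prove by a dimension count: every destabilizing $\varphi$ preserves some degree-$0$ subbundle $F\subsetneq E$, and the $\varphi$ preserving a given $F$ form a proper subspace of $H^0(\sllie(E)\otimes K_X)$ missing $H^0(\text{Hom}(E/F,F)\otimes K_X)$, which is nonzero for $g\geq 2$. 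Bounding the family of such $F$ and comparing dimensions (the $K_X$-twist again providing the slack) shows the destabilizing locus is a proper subvariety, so a generic $\varphi$ is stable; since $G$ may be non-abelian here I avoid the torus criterion and argue directly.

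The main obstacle in both cases is the cohomological nonvanishing that guarantees enough deformation directions: that each $\mathcal{H}^1_+(\text{Hom}(V^i,V^j))\neq 0$ so the effective weights surround the origin (Assumption \ref{assumption_II}), and that the destabilizing subvarieties do not fill $H^0(\sllie(E)\otimes K_X)$ (Assumption \ref{assumption_I}). Both rest on Riemann--Roch with $g\geq 2$ together with Serre duality, and the delicate bookkeeping is keeping the Bia\l ynicki-Birula $\C^*$-grading separate from the $G$-weight; the reduction and the convex geometry are then formal.
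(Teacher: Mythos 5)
There is a genuine gap, and it sits exactly where you declare the argument ``formal'': the reduction from a $G$-stable vector $x'\in\mathcal{H}^{1}_{+}\cap U$ to a stable Higgs bundle $\theta(x')$. Proposition \ref{proposition_yue_local_model} only gives that $\theta(x')$ is \emph{polystable}, and the identification of $\operatorname{Aut}(\theta(x'))$ with the isotropy group $G_{x'}$ that you invoke is not available: the only containment that comes for free from the $G$-equivariance of $\kappa$ is $G_{x'}\subseteq \operatorname{Aut}(\theta(x'))$ (an element of $G=\operatorname{Aut}(\dbar_{E},\Phi)$ fixing $u'=\kappa^{-1}(x')$ is an automorphism of $(\dbar_{u'},\Phi_{u'})$, but an automorphism of $(\dbar_{u'},\Phi_{u'})$ need not preserve $(\dbar_{E},\Phi)$). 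So finiteness of $G_{x'}$ does not bound $\operatorname{Aut}(\theta(x'))$. The proof of Proposition \ref{proposition_stable_vector}(i) that you cite establishes precisely the opposite implication (stability of the Higgs bundle forces triviality of the isotropy group), and the paper explicitly remarks, just before that proposition, that the converse correspondence between $G$-(poly)stability in $\mathcal{H}^{1}$ and (poly)stability of the image Higgs bundles ``does not appear to follow immediately'' from the construction in \cite{Fan22}. The entire machinery the paper builds for the abelian case --- the partitions $P$, the closed subspaces $\mathcal{H}^{1}_{+}(P)$, Lemma \ref{lemma_limit_direct_sum_commute}, Proposition \ref{proposition_stable_vector}(ii), the induction on $\dim G$, and the final contradiction with irreducibility of $\mathcal{H}^{1}_{+}$ --- exists to route around exactly this missing implication. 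Your Assumption \ref{assumption_II} argument therefore does not close; the $G$-stable vector you produce (which is essentially the paper's $\varphi=\sum\varphi_i$, though your stronger claim that \emph{every} block $\mathcal{H}^{1}_{+}(\operatorname{Hom}(V^{i},V^{j}))$ is nonzero is itself not obviously true and is more than is needed) only yields $\dim(\mathcal{H}^{1}_{+}\git G)=\tfrac12\dim\mathcal{M}_{H}$, which is the starting point of the paper's induction, not its conclusion.

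Your Assumption \ref{assumption_I} argument is a different matter: there you bypass the flawed reduction entirely and argue that a generic $\varphi\in H^{0}(\sllie(E)\otimes K_{X})$ leaves no degree-zero direct summand invariant, via a dimension count over the Grassmannian family of polystable summands. That route is sound in outline (every destabilizing subbundle of a slope-zero polystable bundle is a direct summand, the family of such summands has dimension $\sum m_i(n_i-m_i)$, and the codimension of the locus $\{\varphi:\varphi(F)\subseteq F\otimes K_X\}$ is $h^{0}(\operatorname{Hom}(F,E/F)\otimes K_X)\geq (g-1)\operatorname{rk}F\cdot\operatorname{rk}(E/F)$, which dominates for $g\geq 2$), and it is genuinely different from the paper's proof, which instead uses the adjoint identification $(\mathcal{H}^{1})^{0,1}\cong\mathcal{H}^{1}_{+}$, Kempf--Ness, and the one situation where the automorphism containment does go the right way, namely $\operatorname{Aut}(\dbar_{E},\varphi)\subseteq\operatorname{Aut}(\dbar_{E},0)=G$ when $\Phi=0$. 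But this special containment is unavailable under Assumption \ref{assumption_II}, so it cannot rescue the other half of your proof.
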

\begin{proof}
For the first case $\Phi=0$, the underlying holomorphic vector bundle is polystable. Fix a harmonic metric $h$. Let $(\mathcal{H}^{1})^{0,1}:=\{\beta\in\Omega^{0,1}(\sllie(E)): \partial\beta=0\}$. We have the decomposition $\mathcal{H}^{1}=(\mathcal{H}^{1})^{0,1}\oplus \mathcal{H}^{1}_{+}$, where each factor is $G$-invariant. Taking $h$-hermitian adjoint gives the norm-preserving identification $(\mathcal{H}^{1})^{0,1}\xrightarrow{\cong_{\R}}\mathcal{H}^{1}_{+}$. Since $G^{*_{h}}=G$, this identification maps $G$-orbits onto $G$-orbits. Now, there exists $\beta\in (\mathcal{H}^{1})^{0,1}$ with small $h$-norm such that $\theta(\beta)=(\dbar_{E}+\beta,0)$ is a stable vector bundle. By the proof of (i) of Proposition \ref{proposition_stable_vector}, $\beta$ is $G$-stable. Kempf-Ness theorem then implies that $\varphi:=\beta^{*}$ is also $G$-stable. In particular, its isotropy group has dimension zero. By Proposition \ref{proposition_yue_local_model}, the resulting Higgs bundle $(\dbar_{E},\varphi)$ is polystable. But the automorphism group of $(\dbar_{E},\varphi)$ is a subgroup of $G$, and hence coincides with the isotropy group of $\varphi\in \mathcal{H}^{1}_{+}$. Therefore, $(\dbar_{E},\varphi)$ is a stable Higgs bundle.

Next, suppose the automorphism group $G$ of $(\dbar_{E},\Phi)$ is abelian. We use the notations in \eqref{equation_stable_summand_decomposition} and \eqref{equation_hodge_summand_decomposition} respectively to denote the stable summands and Hodge summands respectively. First, we claim that there exists non-trivial $\varphi_{i}\in H^{0}(\text{Hom}(E^{i}_{\ell_i},E^{i+1}_{1})\otimes K_X)$ for each $i$ where here we let $E^{k+1}=E^{1}$. Using Riemann-Roch, we compute
    \begin{equation}
h^{1}((E^{i+1}_{1})^{\vee}E^{i}_{\ell_i})=h^{0}((E^{i+1}_{1})^{\vee}E^{i}_{\ell_i})-\deg ((E^{i+1}_{1})^{\vee}E^{i}_{\ell_i})+\text{rk}(E^{i+1}_{1})\text{rk}(E^{i}_{\ell_i})(g-1).
    \end{equation}
Recall that each $(\dbar^{i},\Phi^{i})$ is a stable SHB, so $\deg E^{i}_{\ell_i}\leq 0$ while $\deg E^{i}_{1}\geq 0$, where the equalities hold if and only if $\ell_i=1$. Hence, $\deg((E^{i+1}_{1})^{\vee}E^{i}_{\ell_i})\leq 0$ and therefore $h^{1}((E^{i+1}_{1})^{\vee}E^{i}_{\ell_i})>0$, which proves the claim. Now, we take $\varphi$ to be $\sum_{i=1}^{k}\varphi_{i}$. Write $$G=\{\xi=(\xi_1,\cdots,\xi_{k-1}):\prod_{i=1}^{k-1}\xi_{i}^{r_i}=1\}.$$
The $G$-action on $\varphi$ (writing $\xi_{k+1}=\xi_{1}$) can be written as $$\xi\cdot \varphi=\sum_{i=1}^{k}\xi_{i+1}\xi_{i}^{-1}\varphi_{i}.$$
Let $t=(t^{c_1},\cdots,t^{c_k})$ be a one-parameter subgroup of $G$, so $\sum_{i=1}^{k}r_i c_i=0$. The action of $t$ on $\varphi$ is given by $t\cdot \varphi=\sum_{i=1}^{k}t^{c_{i+1}-c_i}\varphi_{i}$. Notice that $c_{i+1}=c_{i}$ for all $i$ if and only if $c_i\equiv 0$. Moreover, $\sum_{i=1}^{k}(c_{i+1}-c_{i})=0$. It follows that either $t$ is trivial or there exists $i\neq j$ such that $c_{i+1}-c_i>0$ and $c_{j+1}-c_j<0$. By the Hilbert-Mumford criterion, $\varphi$ is $G$-stable, so $\mathcal{H}^{1}_{+}$ contains some $G$-stable point. In particular, together with Proposition \ref{proposition_dim}, we have $$\dim (\mathcal{H}^{1}_{+}\git G)=\dim \mathcal{H}^{1}_{+}-\dim G=\dfrac{1}{2}\dim \mathcal{M}_{H}=(r^2-1)(g-1).$$

Now, let $(\dbar_u,\Phi_u)\in (\dbar_{E},\Phi)+ S_{+}$. Replacing $u$ with some point in $\overline{G\cdot u}$ we may assume $\kappa(u)$ is $G$-polystable and $(\dbar_u,\Phi_u)$ is polystable. Also, using the $\C^{*}$-action, we may move $\kappa(u)$ into an open ball $U$ used in the construction of the local Kuranishi model in Proposition \ref{proposition_yue_local_model}. If $u\neq 0$ and $(\dbar_u,\Phi_u)$ is not stable, then by (ii) of Proposition \ref{proposition_stable_vector} and upon replacement if needed, $(\dbar_{u},\Phi_{u})\in (\dbar_{E},\Phi)+S_{+}(P)$ for some proper partition $P$. Hence, if $(\dbar_{E},\Phi)+S_{+}$ contains no stable Higgs bundles, then we can write $\mathcal{H}^{1}_{+}\git G=\bigcup_{P} \mathcal{H}^{1}_{+}(P)\git G$, a finite union of closed subsets ranging over proper partitions $P$.

Lastly, we proceed by induction on $\dim G$. First, we look at the case $\dim G=1$, in whise case $P=\{P_1,P_2\}$ is the unique proper partition and $G$ acts trivially on $\mathcal{H}^{1}_{+}(P)$. Let $r_1$ and $r_2$ be the rank of the underlying vector bundles of the two stable factors so that $r=r_1+r_2$. We have $$\dim (\mathcal{H}^{1}_{+}(P)\git G)=\dim\mathcal{H}^{1}_{+}(P_1)+\dim\mathcal{H}^{1}_{+}(P_2)=(r_1^{2}+r_{2}^{2}-2)(g-1)<\dim \mathcal{H}^{1}_{+}\git G.$$
Therefore, there must exist stable Higgs bundles on $(\dbar_{E},\Phi)+S_{+}$ since otherwise one would have $\mathcal{H}^{1}_{+}\git G=\mathcal{H}^{1}_{+}(P)\git G\cong \mathcal{H}^{1}_{+}(P)$.

Now, let $r>1$. We suppose it is true that the ($\dbar_{E},\Phi)+S_{+}$ contains a stable Higgs bundle whenever $\dim G<r$. Assume now that $\dim G=r\geq 2$. We claim that for each proper $P$, there exists polystable $w\in \mathcal{H}^{1}_{+}(P)$ such that  $w \notin\mathcal{H}^{1}_{+}(P')$ for all $P'\rlap{\kern.6em$|$}> P$. It suffices to show that for each subcollection of stable factors $P_{\lambda}\in P$, the locus $(\dbar(P_{\lambda}), \Phi(P_{\lambda}))+S_{+}(P_{\lambda})$ contains a stable Higgs bundle. If $|P_{\lambda}|=1$, $(\dbar(P_{\lambda}), \Phi(P_{\lambda}))$ is stable, in which case it is shown in \cite{CW19} that $(\dbar(P_{\lambda}), \Phi(P_{\lambda}))+S_{+}(P_{\lambda})$ consists of stable Higgs bundles. For $|P_{\lambda}|=r_\lambda+1>1$, we have $0<r_{\lambda}<r$ so this case follows from the induction hypothesis. Hence, if $(\dbar_{E}+\Phi)+S_{+}$ contains no stable Higgs bundles, then $\mathcal{H}^{1}_{+}\git G$ is a finite union of proper closed subsets, which contradicts to the fact that $\mathcal{H}^{1}_{+}$ is irreducible. Therefore, $(\dbar_{E},\Phi)+S_{+}$ must contain a stable Higgs bundle.
\end{proof}

\section{Conformal limits}

Let $(\dbar_E, \Phi)$ be a strictly polystable such that $[(\dbar_E, \Phi)]$ is a $\C^{*}$-fixed point. We will use the SHB structure introduced in Section \ref{subsection_SHB_deformation} and follow the notation therein. From the SHB structure, we obtain a family of isomorphisms from $(\dbar_{E},R\Phi)$ to $(\dbar_{E},\Phi)$ for each $R>0$. For a harmonic metric $h$ of $(\dbar_{E},\Phi)$, the summands $E_a$ and $E_b$ are mutually orthogonal. By pulling back $h$, we obtain harmonic metrics $h_R$ for $(\dbar_{E},R \Phi)$.

Let $(\dbar_{u},\Phi_u)\in S_+$ be stable. The idea in \cite{DFK+21} of establishing the existence of the conformal limit of $(\dbar_{u},\Phi_u)$ is the following. Using $h_R$, one first obtain a family of connections $$\dbar_{u}+\partial^{h_R}_{u}+\hbar^{-1} \Phi_u+\hbar R^{2}\Phi_{u}^{*_{h_R}}$$
which converges to the flat connection
    \begin{equation}
D_0:= \dbar_{u}+\partial^{h}_{E}+h^{-1}\Phi_{u}+\hbar \Phi^{*_h}.        
    \end{equation}
Then, one attempt to correct the metrics to $(e^{f_R})^{*}h_R$, where $f_R$ is $h_R$-hermitian, so that the resulting connections are flat. This amounts to solving
    \begin{equation}\label{equation_M_functional}
N_{u,R}(f_R):= \dbar_u(e^{-2f_R}\partial_u^{h_R}(e^{2f_R}))+F_{(\dbar_u,h_R)}+[\Phi_u,e^{-2f_R}(R^2 \Phi_{u}^{*_{h_R}})e^{2f_R}] = 0.
    \end{equation}
Recall that the SHB structure induces a grading on $\Omega^{p,q}(\text{End}(E))$. Using the notation in Section \ref{subsection_SHB_deformation}, for $d>0$, we have  $$(f_R)_{-d}=R^{2d}(f_{R})_d^{*_h}.$$
This fact is used in \cite{CW19} to parametrize $f_R$ by non-negatively graded pieces so that the operator $M_{u,R}$ is extended to all $R$. When the SHB $(\dbar_{E},\Phi)$ is stable, using the implicit function theorem, it is shown that for small $R$ there exists $f_R$ with $f_0=0$ such that $M_{u,R}(f_R)\equiv 0$, which confirms that the conformal limit of $(\dbar_u,\Phi_u)$ is $D_0$

In the present context, the last perturbation step in the method outlined above breaks down due to the non-uniqueness harmonic metrics of $(\dbar_{E},\Phi)$. In the following, we will see that the issues that arise from this can be overcome under Assumption \ref{assumption_I} or Assumption \ref{assumption_II}. In both cases, we will determine a harmonic metric $h$ which depends on $u$, from which we determine a family of metrics $h'_R$ modified from $h_R$. The first case is technically simpler due to the absence of non-trivial SHB structures and will be addressed in the next subsection. On the other hand, more preparations are needed to treat the second case. Combining Theorem \ref{theorem_conformal_limit_open_stratum} and Corollary \ref{corollary_conformal_abelian}, we then obtain Theorem \ref{theorem_CL}.

\subsection{On the open stratum}\label{section_open_stratum}

Suppose $\Phi=0$, so that $(E,\dbar_{E})$ is a polystable vector bundle. In this case, $$S_{+}=\mathcal{H}^{1}_{+}=H^{0}(\sllie(E)\otimes K_{X})$$ and the Kuranishi map $\kappa$ restricted to $S_{+}$ is the identity (cf. \eqref{equation_S+}). For a harmonic metric $h$ of $(\dbar_{E},0)$, we denote the subgroup of the automorphism group $G$ of $(\dbar_{E},0)$ that fixes $h$ by $K(h)$ or simply by $K$ whenever no confusions arise. It is a maximal compact subgroup of $G$ whose Lie algebra is given by $iH_0(h)$, where
    \begin{equation}
H_0(h):=\{v\in \mathcal{H}^{0}: v^{*_{h}}=v\}.
    \end{equation}
As $G$ acts transitively by pulling back on the set of harmonic metrics of $(\dbar_{E},0)$, by Cartan decomposition we may parametrize the set of harmonic metrics by $H_0(h)$. Under Assumption \ref{assumption_I}, we have $(\dbar_u,\Phi_u)=(\dbar_{E},\varphi)$.

Start with an arbitrary harmonic metric $h_0$ of $(\dbar_{E},0)$. Consider the $G$-representation $V=\mathcal{H}^{1}_{+}$, equipped with an inner product induced from $h_0$ so that $K(h_0)$ acts unitarily. Recall that we assume $(\dbar_u,\Phi_u)$ is stable. By Proposition \ref{proposition_stable_vector}, $\varphi$ is a stable vector in $V$. Hence, there exists $g\in G$ such that the norm of $g\cdot \varphi$ is minimized along the $G$-orbit of $\varphi$. Let $h=g^{*}h_0$, i.e. $h(v,w)=h_0(gv,gw)$. To make the notations more concise, below we shall adopt the abbreviations $H_0=H_0(h)$ and we denote the $h$-hermitian adjoint simply by $\varphi^{*}=\varphi^{*_h}$.

Define the Kempf-Ness functional $\ell_{\varphi}: H_0 \to \R$ by 
    \begin{equation}\label{equation_KN_functional}
        \ell_{\varphi}(g):= ||e^{g}\varphi e^{-g}||^{2}_{h} = i\int_{X}\tr \big[e^{g}\varphi e^{-g} (e^{g}\varphi e^{-g})^{*}\big]= i\int_{X}\tr \big[\varphi (e^{-2g}\varphi^{*}e^{2g})\big].
    \end{equation}
By construction and the Kempf-Ness theorem (\cite{KN79}), $\ell_{\varphi}$ attains its unique minimum at $0$. Hence, its Jacobian $D\ell_{\varphi}$ vanishes and the Hessian $H(\ell_{\varphi})$ is non-degenerate at the origin. For $g, v\in H_0$, we write
    \begin{equation}
v_{g}:= D_{g}\text{exp}^{2}(v)=\dfrac{d}{dt}\bigg|_{t=0}e^{2(g+tv)}\in \mathcal{H}^{0}.
    \end{equation}
The Jacobian of $\ell_{\varphi}$ is computed as follow:
    \begin{align}
-iD_{g}\ell_{\varphi}(v)&:= \dfrac{d}{dt}\bigg|_{t=0} \int_{X} \tr \big[\varphi(e^{2(g+tv)})^{-1}\varphi^{*}e^{2(g+tv)} \big] \\
&= \int_{X}\tr \big[\varphi (-e^{-2g}v_{g}e^{-2g})\varphi^{*}e^{2g}+\varphi e^{-2g}\varphi^{*}v_g \big] \nonumber\\
&= \int_{X} \tr \big(e^{-2g}\varphi^{*}e^{2g}\varphi+\varphi e^{-2g}\varphi^{*}e^{2g}\big) e^{-2g}v_g \nonumber\\
&= \int_{X} \tr [\varphi, e^{-2g}\varphi^{*}e^{2g}]e^{-2g}v_g \label{eqn_1st_order}.
    \end{align}

In particular, letting $g=0$, we have
    \begin{equation}\label{equation_jacobian}
-iD_0\ell_\varphi (v) = 2\int_{X}\tr [\varphi, \varphi^{*}]v=0
    \end{equation}

for any $v\in H_0$.

The second variation can be computed as follow.
    \begin{align}
-i\dfrac{\partial^{2}}{\partial w \partial v}\ell_{\varphi}(g) &:= \dfrac{d}{dt}\bigg|_{t=0} \int_{X} \tr \big[\varphi, (e^{2(g+tw)})^{-1}\varphi^{*} e^{2(g+tw)}\big](e^{2(g+tw)})^{-1}v_{g+tw} \\
&= \int_{X} \tr \bigg\{ \big[\varphi, [e^{-2g}\varphi^{*}e^{2g}, e^{-2g}w_g]\big]e^{-2g}v_{g} -\big[\varphi, e^{-2g}\varphi^{*}e^{2g}\big]\dfrac{d}{dt}\bigg|_{t=0}(e^{2(g+tw)})^{-1}v_{g+tw} \bigg\} \label{eqn_2nd_order}.
    \end{align}

Specializing at $0$, the second term vanishes because of (\ref{equation_jacobian}). Therefore, we obtain

\begin{lemma}\label{lemma_local_minima}
Let $(\dbar_{E},\varphi)$ be a stable Higgs bundle whose underlying holomorphic vector bundle is polystable. Then there exists a unique harmonic metric $h$ of $(\dbar_{E},0)$ such that
    \begin{enumerate}
        \item
$\int_{X} \tr[\varphi, \varphi^{*_h}]v = 0$ for every $v\in H_0$, and
        \item
for $w\in H_0$, if $\int_{X} \tr[\varphi, [\varphi^{*_h},w]]v \equiv 0$ for all $v\in H_0$, then $w=0$.
    \end{enumerate}
\end{lemma}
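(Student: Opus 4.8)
The plan is to read off both conclusions from the variational analysis of the Kempf–Ness functional $\ell_\varphi$ already set up above: condition (1) is the vanishing of the first variation of $\ell_\varphi$ at its minimizer, and condition (2) is the non-degeneracy of the Hessian there, while the uniqueness of $h$ reflects the uniqueness of the Kempf–Ness minimizer.

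First I would record the existence of $h$. Since $(\dbar_E,\varphi)$ is stable, Proposition~\ref{proposition_stable_vector}(i) shows that $\varphi$ is a stable vector of the $G$-representation $V=\mathcal{H}^{1}_{+}$, so by the Kempf–Ness theorem $\ell_\varphi$ attains a unique minimum along $G\cdot\varphi$. Pulling back the auxiliary metric $h_0$ by the minimizing $g\in G$ produces $h=g^{*}h_0$; this is a genuine harmonic metric of $(\dbar_E,0)$ because the Cartan decomposition parametrizes the set of harmonic metrics by $H_0(h)$, and by construction the minimum of $\ell_\varphi\colon H_0\to\R$ now occurs at the origin. This $h$ is the metric claimed in the statement.

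Condition (1) is then immediate: at an interior minimum the first variation $D_0\ell_\varphi$ vanishes, and the computation leading to \eqref{equation_jacobian} identifies $D_0\ell_\varphi(v)$ with the nonzero multiple $2i\int_X\tr[\varphi,\varphi^{*}]v$, whence $\int_X\tr[\varphi,\varphi^{*_h}]v=0$ for all $v\in H_0$. For condition (2) I would use the second variation \eqref{eqn_2nd_order}. Because the first variation vanishes, specializing to $g=0$ kills the correction term, and substituting $v_0=2v$, $w_0=2w$ shows that the Hessian of $\ell_\varphi$ at the origin equals $4i\,B$, where $B(v,w)=\int_X\tr[\varphi,[\varphi^{*},w]]v$ is exactly the bilinear form appearing in (2). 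As a Hessian, $B$ is automatically symmetric, so (2) amounts precisely to the non-degeneracy of $B$.

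To establish that non-degeneracy—already asserted above—I would compute the diagonal of the Hessian. Using the trace identity $\tr([A,B]C)=\tr(A[B,C])$ together with the adjoint relation $[w,\varphi]^{*_h}=[\varphi^{*},w]$ (valid since $w^{*_h}=w$ for $w\in H_0$), the real number $4i\int_X\tr[\varphi,[\varphi^{*},w]]w$ reduces to a positive multiple of $\|[w,\varphi]\|^{2}$, consistent with the Hessian being positive semidefinite at a minimum. Hence $B(w,w)=0$ forces $[w,\varphi]=0$, i.e. $w\in\text{Lie}(G_\varphi)$ where $G_\varphi$ is the stabilizer of $\varphi$; stability of $\varphi$ makes $G_\varphi$ finite, so $w=0$. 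Thus $B$ is positive definite, in particular non-degenerate, which gives (2). Finally, uniqueness of $h$ follows from the geodesic convexity of $\ell_\varphi$: any harmonic metric satisfying (1) is a critical point, hence a minimizer, and stability forces the minimizer—and therefore $h$—to be unique. The main obstacle is the positivity computation in (2): one must keep careful track of the form-degree and wedge-sign conventions so that $\int_X\tr[\varphi,[\varphi^{*},w]]w$ really becomes $+\|[w,\varphi]\|^{2}$ up to a positive constant, and confirm that the kernel of the second variation is exactly the stabilizer Lie algebra and nothing larger.
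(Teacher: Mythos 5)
Your proposal is correct and follows essentially the same route as the paper: pull back the metric by the Kempf--Ness minimizer for the stable vector $\varphi\in\mathcal{H}^{1}_{+}$, read off (1) from the vanishing first variation of $\ell_\varphi$ at $0$, and read off (2) from the Hessian there. The only difference is that you actually justify the non-degeneracy of the Hessian (via $B(w,w)=\lnorm{[w,\varphi]}^2$ up to a positive constant and finiteness of the stabilizer of a stable vector), a point the paper merely asserts as a consequence of the uniqueness of the minimizer; this is a welcome, and correct, addition.
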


Henceforth, we fix the harmonic metric $h$ to be the one obtained in Lemma \ref{lemma_local_minima}. Before we proceed to the proof the existence of the conformal limit of $(\dbar_{E},\varphi)$, we first fix some notation and setting. Let
    \begin{equation}
p:\sllie(E)\to i\mathfrak{su}(E,h)        
    \end{equation}
be the bundle morphism which fibrewise projects a traceless endomorphism to its hermitian part. Let
$$H_1:= H_0^{\perp}\subset L^{2}_{k}(i\mathfrak{su}(E,h))$$
be the $L^{2}$ orthogonal complement in $L^{2}_{k}$-completion of $\Omega^{0}(i\mathfrak{su}(E,h))$ for some sufficiently large $k$. The two spaces of $\Omega^{0}(i\mathfrak{su}(E,h))$ and $\Omega^{2}(i\mathfrak{su}(E,h))$ are identified via the Hodge star. This identification extends to their $L^{2}_{k}$-completions and preserves the harmonic forms.

As $\Phi=0$, we will set $h_R=h$ for all $R>0$. We then obtain the following family of connections
    \begin{equation}
        \dbar_{E}+\partial_{E}^{h}+\hbar^{-1}\varphi+\hbar R^{2}\varphi^{*}.
    \end{equation}
The operator \eqref{equation_M_functional} is reduced to
    \begin{equation}
N(f,R) := \dbar_{E}(e^{-2f}(\partial^{h}_{E}e^{2f}))+R^2[\varphi, e^{-2f}\varphi^{*}e^{2f}].
    \end{equation}
First, we notice the following
    \begin{lemma}\label{lemma_open_stratum_1st_term_redundant}
For all $v\in H_0$, $\int_{X} \tr \ p(i\dbar_{E}(e^{-2f}\partial^{h}_{E}e^{2f}))v=0$.
    \end{lemma}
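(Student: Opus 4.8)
The plan is to reduce everything to the single structural fact that every $v\in H_0$ is covariantly constant for the Chern connection, and then integrate by parts twice. First I would observe that, since $\Phi=0$, on $\Omega^{0}(\sllie(E))$ the operator $D''$ is just $\dbar_E$ and $D'$ is just $\partial^{h}_{E}$, so that $\mathcal{H}^{0}=\ker\dbar_E$ consists of holomorphic endomorphisms. Because $h$ is a harmonic (Hermitian--Einstein) metric, the Kähler identities recorded above force the two Laplacians $\Delta_{D''}$ and $\Delta_{D'}$ to agree, whence $\mathcal{H}^{0}=\ker D''=\ker D'$. Reading this off with $\Phi=0$ says precisely that each $v\in\mathcal{H}^{0}$ satisfies both $\dbar_E v=0$ and $\partial^{h}_{E}v=0$; that is, $v$ is parallel for $\dbar_E+\partial^{h}_{E}$. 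This is the engine of the whole argument.

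Next I set $\alpha:=e^{-2f}\partial^{h}_{E}(e^{2f})$, a $(1,0)$-form valued in $\sllie(E)$, so the quantity in question is $i\dbar_E\alpha$. Using $\dbar_E v=0$ and Leibniz, $\dbar_E(\alpha v)=(\dbar_E\alpha)v$. Since the connection commutator drops under the trace, $\tr\big((\dbar_E\alpha)v\big)=\dbar\,\tr(\alpha v)$, and on the curve $X$ the $\dbar$ of the scalar $(1,0)$-form $\tr(\alpha v)$ equals its $d$; Stokes then gives $\int_X\tr\big(i\dbar_E\alpha\,v\big)=0$.

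To incorporate the projection $p$ I would run the conjugate computation. Metric-compatibility of the Chern connection yields $(\dbar_E\alpha)^{*_h}=\partial^{h}_{E}(\alpha^{*_h})$, and since $e^{2f}$ is $h$-hermitian one computes $\alpha^{*_h}=(\dbar_E e^{2f})e^{-2f}$, a $(0,1)$-form. Now the \emph{identical} integration by parts, this time using $\partial^{h}_{E}v=0$ and converting $\partial^{h}_{E}$ into the scalar $\partial$, gives $\int_X\tr\big(\partial^{h}_{E}(\alpha^{*_h})v\big)=0$, i.e. $\int_X\tr\big((i\dbar_E\alpha)^{*_h}v\big)=0$. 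Since $X$ is a curve, $i\dbar_E\alpha$ is an endomorphism times the (real) Kähler form, so its fibrewise hermitian part $p(i\dbar_E\alpha)$ is a real-linear combination of $i\dbar_E\alpha$ and $(i\dbar_E\alpha)^{*_h}$; pairing with $v$ and integrating, the two vanishing integrals combine to give the claim.

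The main obstacle I anticipate is purely the bookkeeping of adjoint conventions: one must check that the fibrewise projection $p$ really does assemble from $i\dbar_E\alpha$ and its operator adjoint $(\cdot)^{*_h}$ with the correct signs (this is where the factor $i$ and the reality of the Kähler form on a Riemann surface must be used carefully), and that the compatibility identity $(\dbar_E B)^{*_h}=\partial^{h}_{E}(B^{*_h})$ is applied with the right form-degree sign. Once the parallel property of $v$ is established, each of the two vanishing statements is a one-line application of Stokes, so I expect no analytic difficulty beyond this algebra.
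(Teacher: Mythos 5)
Your proof is correct and follows essentially the same route as the paper: write $2p(W)=W+W^{*_h}$ and kill each of the two resulting integrals by Stokes' theorem after pulling $v$ inside the derivative. The only (cosmetic) difference is that the paper disposes of the adjoint term by complex-conjugating the integral, using only $\dbar_E v=0$, whereas you invoke metric compatibility together with the additional fact $\partial^h_E v=0$ (which does hold here, since the Hitchin equation with $\Phi=0$ makes the Chern connection flat).
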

\begin{proof}
For any $A\in \Omega^{1}(\text{End}(E))$, we notice that
    \begin{align*}
2\int_{X}\tr \ p(\dbar_{E}A)v &= \int_{X}\tr \ (\dbar_{E}A+(\dbar_{E}A)^{*})v\\
&= \int_{X}\tr \ [\dbar_{E}(A v)+(\dbar_{E}(vA))^{*}]\\
&= \int_{X}\dbar_{E}\tr \ Av+\overline{\int_{X} \dbar_{E} \tr \ vA}\\
&= 0.
    \end{align*}
\end{proof}

We now arrive at the most important outcome of the above setup.

\begin{proposition}\label{proposition_open_stratum_perp_image}
There exists an open neighborhood $U$ of $0$ in $H_1$ and a smooth map $g: U\to H_0$ such that $g(0)=0$ and for any $v\in H_0$, $$\int_{X} \tr \ p\bigg(i[\varphi, e^{-2(f+g(f))}\varphi^{*}e^{2(f+g(f))}]\bigg)v =0.$$
\end{proposition}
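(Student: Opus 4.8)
The plan is to solve the equation by the implicit function theorem in the Banach-space setting, treating $f$ as the parameter and $g$ as the unknown, with the key analytic input being that the linearization in $g$ at the origin is exactly the non-degenerate Hessian furnished by Lemma \ref{lemma_local_minima}.

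First I would package the condition into a single map. Since $H_0$ is finite-dimensional and carries the $L^2$-inner product, I identify its dual with $H_0$ itself and define, for $k$ sufficiently large and a neighborhood $U'$ of $0$ in $H_1\subset L^2_k(i\mathfrak{su}(E,h))$,
$$\Psi: U'\times H_0\to H_0,\qquad \langle \Psi(f,g),v\rangle := \int_X \tr\, p\big(i[\varphi, e^{-2(f+g)}\varphi^* e^{2(f+g)}]\big)\,v,\quad v\in H_0.$$
Solving $\Psi(f,g)=0$ for $g=g(f)$ is precisely the assertion of the proposition. Because $k$ is large, $L^2_k\hookrightarrow C^0$ and $L^2_k$ is a multiplication algebra, so the exponential and the conjugation $\varphi^*\mapsto e^{-2(f+g)}\varphi^* e^{2(f+g)}$ are smooth maps of Banach spaces; together with smoothness of $\varphi$ and $\varphi^*$ this makes $\Psi$ smooth, and since $g$ ranges in the finite-dimensional $H_0$ there are no completion issues in the unknown.

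Next I would verify the hypotheses of the implicit function theorem at $(f,g)=(0,0)$. At the origin, pairing against $v\in H_0$ gives $\int_X \tr\, p(i[\varphi,\varphi^*])v = i\int_X \tr[\varphi,\varphi^*]v$, using that $i[\varphi,\varphi^*]$ is already fibrewise hermitian so $p$ acts as the identity; this vanishes for every $v$ by condition (1) of Lemma \ref{lemma_local_minima}, so $\Psi(0,0)=0$. Differentiating in the $g$-direction, $\frac{d}{dt}\big|_{0}\,e^{-2tw}\varphi^* e^{2tw}=2[\varphi^*,w]$, whence
$$\langle D_g\Psi(0,0)(w),v\rangle = 2\int_X \tr\, p\big(i[\varphi,[\varphi^*,w]]\big)v = 2i\int_X \tr[\varphi,[\varphi^*,w]]v.$$
This is exactly (twice $i$ times) the Hessian bilinear form of $\ell_\varphi$ computed in \eqref{eqn_2nd_order}, which by condition (2) of Lemma \ref{lemma_local_minima} is non-degenerate on the finite-dimensional space $H_0$; hence $D_g\Psi(0,0)$ is a linear isomorphism. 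The implicit function theorem then produces a neighborhood $U\subset H_1$ of $0$ and a smooth $g:U\to H_0$ with $g(0)=0$ and $\Psi(f,g(f))\equiv 0$, which is the claim.

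The main obstacle is not the reduction itself, which is a routine Lyapunov--Schmidt/implicit-function argument once the Hessian of Lemma \ref{lemma_local_minima} is in hand, but rather the analytic bookkeeping guaranteeing that $\Psi$ is genuinely smooth between Banach spaces. Concretely, one must fix $k$ large enough that $L^2_k\hookrightarrow C^0$ and $L^2_k$ is a multiplication algebra, verify that $f\mapsto e^{\pm 2(f+g)}$ and the induced conjugation are smooth in the $L^2_k$-norm with controlled derivatives of all orders, and confirm that the fibrewise projection $p$ and the pairing against $H_0$ preserve this regularity. One should also confirm that $i[\varphi,[\varphi^*,w]]$ is hermitian, so that $p$ drops out in the linearization and the derivative matches the Hessian exactly rather than only up to projection; this is where the conventions for the adjoint and the factor of $i$ must be tracked carefully.
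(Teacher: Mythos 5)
Your proposal is correct and follows essentially the same route as the paper: define the functional $n(f,g)(v)=\int_X \tr\, p\big(i[\varphi,e^{-2(f+g)}\varphi^* e^{2(f+g)}]\big)v$ on $H_1\oplus H_0$, observe that it vanishes at the origin by condition (1) of Lemma \ref{lemma_local_minima}, identify its derivative in the $H_0$-direction with the Hessian of the Kempf--Ness functional, and invoke condition (2) for non-degeneracy before applying the implicit function theorem. The only (immaterial) discrepancy is the overall scalar in the linearization --- the paper records it as $-2\int_X\tr[\varphi,[\varphi^*,w]]v$ after taking the hermitian projection, while you write $2i\int_X\tr[\varphi,[\varphi^*,w]]v$ --- but invertibility, and hence the conclusion, is unaffected.
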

\begin{proof}
Using the trace pairing, we regard the term $[\varphi, e^{-2f}\varphi^{*}e^{2f}]$ as a dual element of $H_0$. More precisely, define a map $n:H_1\oplus H_0\to H_0^{\vee}$ as follows: for $(f,g)\in H_1\oplus H_0$ and $v\in H_0$,
    \begin{equation}
n(f,g)(v) := \int_{X} \tr \ p\bigg(i[\varphi, e^{-2(f+g)}\varphi^{*}e^{2(f+g)}]\bigg)v.
    \end{equation}
Notice that $n(0,0)=0$ by Lemma \ref{lemma_local_minima}. Next, we compute the differential at the origin and along the $H_0$-direction: for $w\in H_0$,
    \begin{align*}
D_{(0,0)}n(w)(v) &= 2\int_{X} \tr \ p (i[\varphi,[\varphi^{*},w]])v \\
&= -2\text{Im}\int_{X} \tr \ [\varphi,[\varphi^{*},w]]v\\
&= -2\int_{X} \tr \ [\varphi,[\varphi^{*},w]]v.
    \end{align*}

By the second part of Lemma \ref{lemma_local_minima}, $D_{(0,0)}n$ is non-singular. Hence, by the implicit function theorem, we obtain a map $g:U\to H_0$, for some open neighborhood $U$ in $H_1$ containing $0$, such that $g(0)=0$ and $n(f,g(f))\equiv 0$.
\end{proof}

Define the map
$$N'(f,R):=p(iN(f+g(f),R)),$$
where $f\in H_1$ and $g: U\to H_0$ is the map obtained in the above proposition. The image of $N'$ lies in the Hodge dual of $H_1$. Next, we compute the differential of $N'$ at $(0,0)$ along the $H_1$-direction: $$dN'_{(0,0)}(f) = 2p \big(i\dbar_{E}\partial^{h}_{E}(f+D_0 g(f))\big)
= 2i\dbar_{E}\partial^{h}_{E}f$$
since $D_0 g(f)\in \mathcal{H}^{0}$ and $i\dbar_{E}\partial^{h}_{E} f$ is $h$-hermitian. As the Laplacian $i\dbar_{E}\partial^{h}_{E}$ has index zero with trivial kernel on $H_1$, by the implicit function theorem, we have a family $f_R\in H_1$ with $f_0=0$ for small $R$, such that $$p(iN(f_R+g(f_R),R))\equiv 0.$$
We notice that $iN(f_R+g(f_R),R)$ is $(e^{f_R+g(f_R)})^{*}h$-hermitian. To see that $N(f_R+g(f_R),R)\equiv 0$ for small $R$, we use the following

\begin{lemma}\label{lemma_open_stratum_small_projection}
Let $h_1$ and $h_2$ be two hermitian inner products on a finite dimensional vector space $V$ that induce the same volume form. Let $A$ be the $h_1$-hermitian endomorphism such that $h_2(\cdot,\cdot) = h_1(A\cdot,\cdot)$. Write $\text{Herm}_{0}(V,h_i)$ to be the space of traceless $h_i$-hermitian endomorphisms, $i=1,2$, and $$p_A:\text{Herm}_{0}(V,h_2)\to \text{Herm}_{0}(V,h_1)$$ the projection map with respect to the inner product on $\text{End}(V)$ induced from $h_1$.
If $||A||_{h_1}$ is sufficiently small, then $p_A$ is an $\R$-linear isomorphism.
\end{lemma}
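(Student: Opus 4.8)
The plan is to reduce the statement to the injectivity of $p_A$ and then compute its kernel directly. First I would note that for either metric $h_i$ the traceless $h_i$-hermitian endomorphisms form the kernel of the real, surjective trace functional on the $n^2$-dimensional real space of $h_i$-hermitian endomorphisms (here $n=\dim_{\C}V$), so $\text{Herm}_0(V,h_1)$ and $\text{Herm}_0(V,h_2)$ are real vector spaces of the same dimension $n^2-1$. Since $p_A$ is $\R$-linear between spaces of equal finite dimension, it is an isomorphism as soon as it is injective.

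To identify $\ker p_A$, I would use the $h_1$-orthogonal decomposition $\sllie(V)=\text{Herm}_0(V,h_1)\oplus\sulie(V,h_1)$, where the two summands are orthogonal for the real inner product $\langle S,T\rangle=\operatorname{Re}\tr(ST^{*_{h_1}})$ because $\tr(PQ)$ is purely imaginary whenever $P$ is $h_1$-hermitian and $Q$ is $h_1$-skew-hermitian. As $\text{Herm}_0(V,h_2)\subset\sllie(V)$ and $p_A$ is the restriction of the $h_1$-orthogonal projection onto $\text{Herm}_0(V,h_1)$, an element $S\in\ker p_A$ is precisely one that is simultaneously $h_2$-hermitian and $h_1$-skew-hermitian.

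The computational core is then a short adjoint identity: from $h_2(\cdot,\cdot)=h_1(A\cdot,\cdot)$ and the $h_1$-hermiticity of $A$ one obtains $S^{*_{h_2}}=A^{-1}S^{*_{h_1}}A$, so the two conditions $S^{*_{h_2}}=S$ and $S^{*_{h_1}}=-S$ collapse to the anticommutation relation $AS+SA=0$. Positivity of $A$ then finishes the argument: if $Av=\lambda v$ with $\lambda>0$, then $A(Sv)=-\lambda Sv$, and since the positive operator $A$ has no eigenvalue $-\lambda$, this forces $Sv=0$; as the eigenvectors of $A$ span $V$, we get $S=0$. Hence $p_A$ is injective, and therefore an isomorphism.

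The only delicate bookkeeping is the adjoint formula $S^{*_{h_2}}=A^{-1}S^{*_{h_1}}A$ together with the orthogonality of the hermitian/skew-hermitian splitting; once these are in place the positivity step is immediate, so I do not anticipate any real obstacle. I would remark that this reasoning establishes the isomorphism for every positive-definite $A$, so smallness (that is, $A$ near $\Id$, equivalently $h_2$ close to $h_1$) is not strictly necessary for the conclusion. Should one prefer to invoke it, a quick alternative is to trivialize the domain by the isomorphism $T\mapsto A^{-1/2}TA^{1/2}$ of $\text{Herm}_0(V,h_1)$ onto $\text{Herm}_0(V,h_2)$, observe that the resulting endomorphism of $\text{Herm}_0(V,h_1)$ depends continuously on $A$ and equals the identity at $A=\Id$, and conclude that it stays invertible for $A$ near $\Id$ since invertibility is an open condition.
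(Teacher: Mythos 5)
Your proof is correct, and it takes a genuinely different route from the paper's. The paper argues by continuity: it assembles the spaces $\text{Herm}_{0}(V,h_B)$ into a vector bundle over a neighborhood of the reference metric, observes that the projection is the identity at $h_B=h_1$, and invokes openness of invertibility --- which is exactly why the smallness hypothesis appears there. You instead compute the kernel directly: after the dimension count reduces the claim to injectivity, the adjoint identity $S^{*_{h_2}}=A^{-1}S^{*_{h_1}}A$ turns membership in $\ker p_A$ (simultaneously $h_2$-hermitian and $h_1$-skew-hermitian) into the anticommutation relation $AS+SA=0$, which positivity of $A$ kills via the eigenvalue argument. The individual steps all check out: $\text{Herm}_{0}(V,h_2)\subset\sllie(V)$, the $h_1$-hermitian part of a traceless endomorphism is again traceless so that $p_A(S)=\tfrac{1}{2}(S+S^{*_{h_1}})$, and the orthogonality of the hermitian/skew-hermitian splitting for $\operatorname{Re}\tr(ST^{*_{h_1}})$ is as you say. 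What your argument buys is the removal of the smallness hypothesis altogether --- $p_A$ is an isomorphism for \emph{any} pair of hermitian metrics --- which also simplifies the way the lemma is used downstream: the paper must combine the fibrewise statement with compactness of $X$ to extract a uniform smallness threshold, whereas with your version no threshold is needed. (Your remark also sidesteps the awkward phrasing ``$\|A\|_{h_1}$ sufficiently small,'' which read literally is incompatible with $\det A=1$; the intended meaning, visible from the paper's proof, is that $A$ is close to $\Id$.) The paper's soft argument is shorter to state and uses nothing about $A$ beyond continuity of the family, but your computation is brief enough that this is not a real advantage here.
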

\begin{proof}
Let $U$ be a neighborhood around $0$ of $\text{Herm}_{0}(V,h_1)$ such that if $B\in U$ then $h_B:=h_1(B\cdot,\cdot)$ is a hermitian inner product. Let $*_{h_B}$ be the conjugation with respect to the metric $h_B$. It follows that $\text{Herm}_{0}(V,h_B)$ is the fixed point set of $*_{h_B}$ and $\text{Herm}_0:=\{(f,B)\in \text{End}_{0}(V)\times U: f\in \text{Herm}_{0}(V,h_B)\}$ is a vector bundle over $U$.

Consider the bundle morphism $p_{\cdot}:\text{Herm}_0 \to \text{Herm}_{0}(V,h_1)$. The result then follows from continuity and that at $B=0$, $p$ is the identity.
\end{proof}

Now, if $k$ is sufficiently large, then $N(f_R+g(f_R),R)\in L^{2}_{k-2}(\mathfrak{su}(E,h)\otimes \bigwedge^{2}T^{*}X)\subset C^{0}(\mathfrak{su}(E,h)\otimes \bigwedge^{2}T^{*}X)$. By compactness of $X$ and Lemma \ref{lemma_open_stratum_small_projection}, $N(f_R+g(f_R),R)\equiv 0$ for sufficiently small $R$. This proves

\begin{theorem}\label{theorem_conformal_limit_open_stratum}
For a stable Higgs bundle $(\dbar_{E},\varphi)$ whose underlying holomorphic vector bundle $(E,\dbar_{E})$ is polystable, its $\hbar$-conformal limit is $\dbar_{E}+\partial^{h}_{E}+\hbar^{-1}\varphi$.   
\end{theorem}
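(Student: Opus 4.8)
The plan is to realize the conformal limit through the metric-perturbation scheme of \cite{DFK+21} and \cite{CW19}: inside the family $\dbar_E+\partial_E^h+\hbar^{-1}\varphi+\hbar R^2\varphi^{*}$ I correct the metric from $h$ to a nearby metric $(e^{f})^{*}h$ so that the corrected connection becomes flat, and then let $R\to 0$. Since $\Phi=0$ I may take $h_R=h$ for every $R$, so flatness of the corrected connection is exactly the vanishing of the operator $N(f,R)=\dbar_E(e^{-2f}\partial_E^h e^{2f})+R^2[\varphi,e^{-2f}\varphi^{*}e^{2f}]$ introduced above. Thus it suffices to produce a family $f_R$ with $f_0=0$ solving $N(f_R,R)=0$ for small $R$; as $R\to 0$ the correction $e^{f_R}$ tends to $\Id$ and the $\hbar R^2\varphi^{*}$ term dies, so the flat connections converge to $\dbar_E+\partial_E^h+\hbar^{-1}\varphi$, which is then by definition the $\hbar$-conformal limit.

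The one genuinely new difficulty, absent from the stable case of \cite{CW19}, is that $\mathcal{H}^0\neq 0$: the linearization of $N$ at $(0,0)$ is the Laplacian $2i\dbar_E\partial_E^h$, whose kernel on $\Omega^0(i\mathfrak{su}(E,h))$ is exactly $H_0$, so the implicit function theorem cannot be applied to $N$ directly. This is the analytic shadow of the non-uniqueness of harmonic metrics for $(\dbar_E,0)$. I would overcome it by working relative to the splitting $\Omega^0(i\mathfrak{su}(E,h))=H_0\oplus H_1$ and solving in two stages, after first pinning down the distinguished metric $h$ of Lemma \ref{lemma_local_minima} as the unique Kempf--Ness minimizer of $\varphi$. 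In the first stage, Lemma \ref{lemma_open_stratum_1st_term_redundant} shows the $\dbar_E(e^{-2f}\partial_E^h e^{2f})$-term is automatically $H_0$-orthogonal, so the $H_0$-component of $p(iN)$ is governed solely by the commutator term; Proposition \ref{proposition_open_stratum_perp_image}, using the Hessian nondegeneracy in Lemma \ref{lemma_local_minima}(2), produces $g:U\to H_0$ with $g(0)=0$ killing this component, so that $N'(f,R):=p(iN(f+g(f),R))$ takes values in the Hodge dual of $H_1$. In the second stage, the differential of $N'$ at $(0,0)$ along $H_1$ is $2i\dbar_E\partial_E^h$, an isomorphism on $H_1$ since its kernel is precisely $H_0$; the implicit function theorem then yields $f_R\in H_1$ with $f_0=0$ and $p(iN(f_R+g(f_R),R))\equiv 0$ for $R$ small.

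Finally I would upgrade this projected vanishing to the full equation $N(f_R+g(f_R),R)\equiv 0$. Writing $h'_R=(e^{f_R+g(f_R)})^{*}h$, the form $iN(f_R+g(f_R),R)$ is $h'_R$-hermitian rather than $h$-hermitian, so I would invoke Lemma \ref{lemma_open_stratum_small_projection}: for $R$ small the endomorphism comparing $h'_R$ to $h$ is $C^0$-small by Sobolev embedding and compactness of $X$, hence the restriction of $p$ to $h'_R$-hermitian endomorphisms is an $\R$-linear isomorphism, and vanishing of $p(iN)$ forces $N(f_R+g(f_R),R)=0$. The corrected connections $\dbar_E+\partial_E^{h'_R}+\hbar^{-1}\varphi+\hbar R^2\varphi^{*_{h'_R}}$ are then flat members of the defining family, and since $f_0=0$ and $g(0)=0$ their limit as $R\to 0$ is $\dbar_E+\partial_E^h+\hbar^{-1}\varphi$.

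I expect the first stage to be the crux, since it is exactly where the polystable case diverges from the stable one: the whole argument hinges on choosing $h$ so that the zeroth-order obstruction $\int_X\tr[\varphi,\varphi^{*}]v$ vanishes for all $v\in H_0$ while its Hessian stays nondegenerate, which is what lets one peel off the kernel direction $H_0$ via the map $g$ before inverting the Laplacian on $H_1$. The flatness-upgrade through Lemma \ref{lemma_open_stratum_small_projection} and the passage to the limit $R\to 0$ are then comparatively routine.
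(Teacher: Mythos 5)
Your proposal follows the paper's own proof essentially step for step: fixing the distinguished harmonic metric via the Kempf--Ness minimization of Lemma \ref{lemma_local_minima}, splitting off the $H_0$-obstruction with Lemma \ref{lemma_open_stratum_1st_term_redundant} and Proposition \ref{proposition_open_stratum_perp_image}, inverting the Laplacian $2i\dbar_{E}\partial^{h}_{E}$ on $H_1$ via the implicit function theorem, and upgrading the projected vanishing to $N\equiv 0$ through Lemma \ref{lemma_open_stratum_small_projection}. The argument is correct and matches the paper's route, including your identification of the metric-selection step as the crux.
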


\subsection{On polystable SHBs with pairwise distinct stable summands}

In this subsection, we assume the stabilizer of $(\dbar_{E},\Phi)$ is abelian, i.e. the stable summands $(\dbar^{i},\Phi^{i})$ in \eqref{equation_stable_summand_decomposition} are pairwise non-isomorphic. For $v\in \Omega^{p,q}(\text{End}(E))$, we write $v=\sum_{i,j,a,b} v_{jb|ia}$, where $v_{jb|ia}\in \Omega^{p,q}(\text{Hom}(E^{i}_{a},E^{j}_{b}))$. To write down an explicit description of the $G$-action on $S_{+}$, we first embed $G$ into $(\C^{*})^{k}$. Let $\chi_{i}$ be the standard character of $(\C^{*})^{k}$ obtained from projection to the $i$-th factor. Introduced in Section \ref{subsection_SHB_deformation}, $G\times \C^{*}$ acts on  $(\beta,\varphi)$ in the following way:
    \begin{equation}
        (t,s)\cdot (\beta,\varphi) = \bigg(\sum s^{b-a} t_{i} (t_{j})^{-1} \beta_{jb|ia}, \sum s^{b-a+1} t_{i} (t_{j})^{-1}\varphi_{jb|ia} \bigg).
    \end{equation}
Consider the finite dimensional vector space $V$ spanned by the components $\beta_{jb|ia}$ and $\varphi_{jb|ia}$. Note that $G\times \C^{*}$ acts on the whole $V$ linearly, and $V$ inherits a hermitian inner product from a harmonic metric $h$ of $(\dbar_{E},\Phi)$, such that $K(h)\times S^{1}\subset G\times \C^{*}$ acts unitarily. Since the $G$-orbit of $u$ is closed in $S_+$, it is also closed in $V$. Hence, $u$ is a $G$-stable vector in $V$. In the following, we will use the notation introduced in Section \ref{section_linear_rep}.

Next, we are going to introduce the following procedure:

Start with \begin{inparaenum}[(i)]
    \item $u_0=u=(\beta,\varphi)$,
    \item $G_0=G$,
    \item $c_{-1}=0\in \Q$,
    \item $x_{-1}=0\in X(G)^{\vee}\otimes_{\Z}\Q\subset i\mathfrak{k}$ and
    \item $S_{-1}=\emptyset$.
\end{inparaenum}

Suppose we have determined $u_{i+1}$, $G_{i+1}$, $c_i$ , $x_i$ and $S_i$ for $-1\leq i<n$. In the $n$-th iteration (starting with $n=0$):

Let $\nu_n$ be the sum of components of $u_n$ fixed by $G_n$. Let 
    \begin{equation}
\phi_{n}:=\sum_{i=0}^{n}\nu_i+\sum_{i=-1}^{n-1}P_{S_i}(u).        
    \end{equation}
Here, $P_{S_i}(u)$ refers to the projection of $u$ to $\bigoplus_{\lambda\in S_i} V_{\lambda}$.

Consider $\Lambda_{G\times \C^{*}}(u-\phi_n)\subset (i\mathfrak{k}_{0})^{\vee}\oplus \R$. The inclusion $G_{n}\to G$ induces a projection $$\pi_{n}: (i\mathfrak{k})^{\vee}\oplus \R\to (i\mathfrak{k}_n)^{\vee}\oplus \R,$$
where $\mathfrak{k}_{n}$ is the Lie algebra of the maximal compact of $G_{n}$. 
Define the transformation the following transformation on $i\mathfrak{k}\oplus \R$
    \begin{equation}
s_n(\ell,\rho):=(\ell,\rho+\langle \ell, \sum_{i=-1}^{n-1}x_i\rangle).        
    \end{equation}
Let $C_n$ be the convex hull of $\pi_{n}\circ s_n(\Lambda_{G\times \C^{*}}(u-\phi_n))$. Let
    \begin{equation}
\{0\}\times [c_n,c_n']:=C_n\cap \{0\}\times \{\rho: \rho>0\}        
    \end{equation}
the intersection of the positive $\rho$-ray with $C_{n}$. Then, verify that $c_{n-1} < c_n\in \Q$.

Let $F_n$ be the face of $C_n$ that contains the point $(0,c_n)$ in its interior (as a CW-complex), and
    \begin{equation}
S_n:= \Lambda_{G\times \C^{*}}(u)\cap s_{n}^{-1}(F_n).        
    \end{equation}
Verify that $P_{S_n}(u)$ is $G_n$-polystable. Replace the metric $h$ by some metric in $(G_{n})^{*}h$, with respect to which $P_{S_{n}}(u)$ attains the minimum norm along its $G_{n}$-orbit, which exists by the Kempf-Ness theorem.

Lastly, we find some $x_n\in X(G_n)^{\vee}\otimes_{\Z}\Q\subset i\mathfrak{k}$ such that the following hold: for all $(\ell,\rho)\in S_n$ and $(\ell',\rho')\in \Lambda_{G\times \C^{*}}(u-\phi_n)-S_n$, we have
    \begin{equation}\label{equation_face}
        \rho+\langle \ell, \sum_{i=-1}^{n}x_i\rangle = c_n
    \end{equation}
and
    \begin{equation}\label{equation_polytope_upper}
        \rho'+\langle \ell', \sum_{i=-1}^{n}x_i\rangle> c_n.
    \end{equation}

Now, let $G_{n+1}$ be the stablizer of $P_{S_{n}}(u)$ and $u_{n+1}:= u-\phi_{n}-P_{S_{n}}(u)$. If $\dim G_{n+1}=0$, we stop. Otherwise, we proceed to the ($n+1$)-iteration.

The following is the first main technical input for the main result of this section.

\begin{proposition}\label{proposition_iterative_algorithm}
Suppose $u=(\beta,\varphi)$ is stable. Then the iterative procedure described above is well-defined and terminates at the $(k-1)$-th iteration for some $k\geq 1$. Along the way, the following data is obtained:
    \begin{enumerate}[label=(\roman*)]
        \item
a harmonic metric $h$ of $(\dbar_{E},\Phi)$, which induces a hermitian inner product on $V$
        \item
$(x,\sigma) \in X(G\times \C^{*})^{\vee}$, where $\sigma\in \Z_{>0}$ determines a one-parameter subgroup of $\C^{*}$ of weight $\sigma$
        \item
a sequence of tori $G=G_0\supset G_1\supset \cdots \supset G_k = \{1\}$
        \item
$\nu_0, \cdots, \nu_{k-1}\in V$ with $\nu_i$ fixed by $G_i$ for each $i=0,\cdots k-1$
        \item
$S_0, \cdots, S_{k-1}\subset \Lambda_{G\times \C^{*}}(u)$, where each $P_{S_i}(u)$ is $G_i$-polystable and fixed by $G_{i+1}$, for $i=0,\cdots, k-1$, each having minimal $h$-norm along its $G_i$-orbit, and
        \item
a sequence of integers $0<d_0<\cdots<d_{k-1}\in \Z_{>0}$, such that under the action of the one-parameter subgroup $(x,\sigma)$, we have
    \begin{equation}\label{equation_rescaled_R}
(x(t),\sigma(t)) \cdot u =  \sum_{i=0}^{k-1}(x(t),\sigma(t))\cdot \nu_i+\sum_{i=0}^{k-1} t^{d_i} P_{S_{i}}(u)+(x(t),\sigma(t))\cdot u_{k}.
    \end{equation}
    \end{enumerate}
Each $(x(t),\sigma(t))\cdot \nu_i$ can be further decomposed into
    \begin{equation}\label{equation_rescaled_R_2}
(x(t),\sigma(t))\cdot \nu_i = \sum_{j} t^{d_{ij}}\nu_{ij},         
    \end{equation}
where all $d_{ij}$'s are positive integers and $\min_{j} d_{ij} > d_{i-1}$.
Likewise, the residual term $(x(t),\sigma(t))\cdot u_{k}$ can be decomposed into 
    \begin{equation}\label{equation_rescaled_R_3}
(x(t),\sigma(t))\cdot u_{k} = \sum_{j} t^{e_j} (u_k)_{j},         
    \end{equation}
where $e_j$'s are positive integers and $\min_{j} e_{j}>d_{k-1}$.
\end{proposition}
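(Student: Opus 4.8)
The plan is to translate the whole procedure into the convex geometry of the weight polytope of $u$ and to run it as a greedy induction, with Proposition~\ref{prop-convexhull} (and its relative-interior refinement for closed orbits) as the only real input. Write $\Lambda=\Lambda_{G\times\C^{*}}(u)\subset(i\mathfrak{k})^{\vee}\oplus\R$; since $G\hookrightarrow(\C^{*})^{k}$ and the $\C^{*}$-weights $b-a$, $b-a+1$ are integral, $\Lambda$ lies in an integral lattice and every polytope below is rational. Because $u\in S_{+}$, every weight in $\Lambda$ has strictly positive $\C^{*}$-component $\rho$, so the positive $\rho$-ray $\{0\}\times\R_{>0}$ is the correct locus to probe. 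At stage $n$ one has removed the $G_n$-fixed part $\nu_n$ and located the face $F_n$ of the current polytope $C_n$ that the positive $\rho$-ray first meets; the weights realizing $F_n$ form a block $P_{S_n}(u)$ that turns out $G_n$-polystable, after which one passes to its stabilizer $G_{n+1}\subsetneq G_n$ and iterates. The outputs $x_n$ are the cocharacters certifying these faces, and their accumulation $(x,\sigma)$ is the single one-parameter subgroup displaying all blocks at increasing orders in $t$.

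The engine is an invariant I would prove by induction: $u-\phi_n$ is $G_n$-\emph{stable}, i.e. $0$ lies in the interior of the convex hull of its $G_n$-weights $\{\pi_n(\ell):(\ell,\rho)\in\Lambda(u-\phi_n)\}$, the base case being the $G$-stability of $u=u_0$. The inductive step from $u-\phi_n$ to $u-\phi_{n+1}$ rests on an elementary convexity lemma: if $0\in\mathrm{int}\,\mathrm{conv}(A\cup\{0\})$ then $0\in\mathrm{int}\,\mathrm{conv}(A)$ (separating $0$ from $\mathrm{conv}(A)$ would force it onto the boundary of $\mathrm{conv}(A\cup\{0\})$). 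Passing to the quotient $(i\mathfrak{k}_n)^{\vee}\twoheadrightarrow(i\mathfrak{k}_{n+1})^{\vee}$, under which the $S_n$-weights span the kernel and so collapse to $0$, the lemma turns the $G_n$-stability of $u-\phi_n$ into the $G_{n+1}$-stability of $u_{n+1}=u-\phi_n-P_{S_n}(u)$; a second application, deleting the $0$-weights $\nu_{n+1}$, yields the $G_{n+1}$-stability of $u-\phi_{n+1}$. In particular $0$ always lies in the $G_n$-weight hull, so the positive $\rho$-ray meets $C_n$ and $c_n$ is defined; and $(0,c_n)$ lies in the relative interior of $F_n$, so by the closed-orbit criterion $P_{S_n}(u)$ is $G_n$-polystable and \cite{KN79} supplies the norm-minimizing metric in $(G_n)^{*}h$. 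Since those weights are non-fixed and span a positive-dimensional subspace, $\dim G_{n+1}<\dim G_n$; hence $G=G_0\supsetneq G_1\supsetneq\cdots$ exhausts the torus in finitely many steps, say $k$, and the loop halts at the $(k-1)$-th iteration.

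The remaining verifications are comparatively formal. Rationality $c_n\in\Q$ is immediate since $C_n$ is a rational polytope and $\{0\}\times\R_{>0}$ a rational ray. Monotonicity $c_{n-1}<c_n$ follows directly from \eqref{equation_polytope_upper} at the previous stage: every weight of $\Lambda(u-\phi_n)\subset\Lambda(u-\phi_{n-1})-S_{n-1}$ satisfies $\rho+\langle\ell,\sum_{i=-1}^{n-1}x_i\rangle>c_{n-1}$, which is exactly the $\rho$-coordinate of $s_n(\ell,\rho)$, so $C_n\subset\{\rho>c_{n-1}\}$ and the first crossing sits strictly above $c_{n-1}$. The existence of $x_n$ obeying \eqref{equation_face}--\eqref{equation_polytope_upper} is the supporting-hyperplane theorem for the exposed rational face $F_n$: there is a supporting functional with nonzero $\rho$-coefficient (a vertical one would contain the whole ray and could not support $C_n$ from below at $(0,c_n)$), and normalizing that coefficient produces a rational $x_n\in X(G_n)^{\vee}\otimes_{\Z}\Q$; then \eqref{equation_face} and \eqref{equation_polytope_upper} are its equality-on-$F_n$ and strict-inequality-off-$F_n$ rewritten through $s_n$ and $\pi_n$.

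Finally I would read off the decomposition from the accumulated cocharacter $(x,\sigma)\in X(G\times\C^{*})^{\vee}$, chosen (after clearing denominators) so that a weight-$(\ell,\rho)$ component scales by $t^{\sigma(\rho+\langle\ell,\sum_i x_i\rangle)}$, an integer power. By \eqref{equation_face} every component of $P_{S_i}(u)$ carries the common exponent $d_i:=\sigma c_i$, with $d_0<\cdots<d_{k-1}$ since $c_0<\cdots<c_{k-1}$, giving the middle sum of \eqref{equation_rescaled_R}; by \eqref{equation_polytope_upper} the components of $\nu_i$ and of $u_k$ carry strictly larger exponents, bounded below by $d_{i-1}$ and $d_{k-1}$, which is \eqref{equation_rescaled_R_2}--\eqref{equation_rescaled_R_3}. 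The \emph{main obstacle} is the global consistency of this stage-by-stage construction: one must check that a single cocharacter simultaneously realizes all faces $F_n$ at the intended levels, i.e. that the later increments $x_j$ ($j>i$) do not disturb the level $c_i$ already assigned to $S_i$, and that each $S_n=\Lambda(u)\cap s_n^{-1}(F_n)$ is disjoint from the previously peeled weights. Both hinge on the fact that $P_{S_i}(u)$ is fixed by every subtorus $G_j$ with $j>i$, whence $\pi_{i+1}(\ell)=0$ and $\langle\ell,x_j\rangle=0$ for $(\ell,\rho)\in S_i$; tracking this cancellation across all stages, together with the interplay of the shears $s_n$, the projections $\pi_n$, and the passage to $G_{n+1}$, is where the genuine bookkeeping lies, the convexity and termination claims being essentially formal once it is in place.
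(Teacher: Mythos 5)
Your architecture coincides with the paper's: induct on the iteration, keep $0$ in the interior of the $G_n$-weight hull of $u-\phi_n$ (the paper gets this in one line from the Hilbert--Mumford criterion; your convexity lemma about deleting zero weights is a correct substitute, since $\phi_n$ contributes only trivial $G_n$-weights), deduce that the positive $\rho$-ray meets $C_n$, obtain $c_{n-1}<c_n$ from \eqref{equation_polytope_upper} at the previous stage, establish polystability of $P_{S_n}(u)$, produce $x_n$ by a supporting-hyperplane argument, and terminate because $\dim G_{n+1}<\dim G_n$. One stylistic divergence: for the polystability of $P_{S_n}(u)$ you invoke a ``relative-interior refinement'' of Proposition \ref{prop-convexhull} for closed orbits, which the paper neither states nor proves; the paper instead splits off a complementary torus $G_n'$ of $G_{n+1}$ inside $G_n$ and applies the interior criterion there to get $G_n'$-stability, hence closedness of the $G_n$-orbit. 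Your route is standard, but you should either prove the refinement or reduce to the proved statement as the paper does.

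The genuine gap is in the existence of $x_n$ with the correct orientation. Ruling out a \emph{vertical} supporting hyperplane at $F_n$ (which you do correctly, since a vertical one would force $q_n(C_n)$ into a closed half-space through $0$, contradicting $G_n$-stability, where $q_n$ denotes the projection forgetting the $\rho$-coordinate) is not enough: a non-vertical supporting functional, once normalized, could a priori carry $\rho$-coefficient $-1$, which reverses the inequality in \eqref{equation_polytope_upper}. The functional supports $C_n$ ``from below'' precisely when $c_n<c_n'$, where $\{0\}\times[c_n,c_n']=C_n\cap(\{0\}\times\{\rho>0\})$; your proposal asserts this without proof. The paper isolates exactly this point as a sub-lemma (if $\dim C_n=\dim q_n(C_n)+1$ then $c_n<c_n'$, proved by cutting down to a two-dimensional configuration), and treats the degenerate case $\dim C_n=\dim q_n(C_n)$ separately, where $C_n=F_n$ lies in a single non-vertical hyperplane, \eqref{equation_polytope_upper} is vacuous, and the iteration necessarily halts. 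This is the one step of the proposition that requires an actual argument rather than bookkeeping, and it is missing. The remaining items --- rationality and monotonicity of the $c_n$, the exponent bounds $\min_j d_{ij}>d_{i-1}$ and $\min_j e_j>d_{k-1}$, and the cross-stage consistency via $\langle\ell,x_j\rangle=0$ for $(\ell,\rho)\in S_i$ and $j>i$ (because $P_{S_i}(u)$ is $G_{i+1}$-fixed and $x_j\in X(G_j)^{\vee}$ with $G_j\subset G_{i+1}$) --- are handled as in the paper.
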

\begin{proof}
Assume we have performed the procedure described above successfully from the $0$-th up to the $(n-1)$-th iteration. We are going to show that the $n$-th iteration can be carried out, giving \begin{inparaenum}[(i)]
    \item $\nu_n$,
    \item $c_n$,
    \item an updated metric $h$,
    \item a subset of weights $S_n$, and
    \item a solution $x_n$ to \eqref{equation_face} and \eqref{equation_polytope_upper},
\end{inparaenum}
all of which satisfy the required properties.

First, note that $\phi_{n}$ is fixed by $G_{n}$. As $u$ is $G$-stable, $u-\phi_{n}$ is $G_{n}$-stable by the Hilbert-Mumford criterion. Let $$q_{n}: (i\mathfrak{k}_{n})^{\vee}\oplus \R\to (i\mathfrak{k}_{n})^{\vee}$$ be the projection map. By Proposition \ref{prop-convexhull}, the convex hull of $q_{n}\circ\pi_{n}\circ s_{n}(\Lambda_{G\times \C^{*}}(u-\phi_n))$, which is $q_{n}(C_{n})$, contains $0$ in its interior. Hence, $C_{n}\cap \{0\}\times\{\rho: \rho>0\}$ is non-empty. As the $\rho$-coordinates of all points in $s_n(\Lambda_{G\times \C^{*}}(u-\phi_n))$ are greater than $c_{n-1}$, we have $c_{n-1}<c_{n}\in \Q_{>0}$ ($(0,c_{n})$ is a convex combination of these points).

Now, following the description of iterative procedure, the face $F_{n}$ contains $(0,c_n)$ in its interior. Note that $F_{n}$ is not a vertex, since otherwise $F_{n}=\{(0,c_n)\}$ but we have already removed $\nu_n$ from $u-\phi_{n}$. Next, we claim that $P_{S_{n}}(u)$ is $G_{n}$-polystable. Take a complementary torus of $G_{n+1}$ inside $G_{n}$ and let $\mathfrak{k}_{n}'$ be the Lie algebra of its maximal compact. So we have $(i\mathfrak{k}_{n})^{\vee}=(i\mathfrak{k}_{n+1})^{\vee}\oplus (i\mathfrak{k}_{n}')^{\vee}$. Let $\pi_{n}':(i\mathfrak{k}_{n})^{\vee}\to (i\mathfrak{k}_{n}')^{\vee}$ be the projection map. It follows that the convex hull of $\pi_{n}'(\Lambda_{G_{n}}(P_{S_n}(u)))$ contains $0$ in its interior. By Proposition \ref{prop-convexhull}, $P_{S_n}(u)$ is then $G_{n}'$-stable. In particular, its $G_{n}'$-orbit is closed. Since the $G_{n}$-orbit of $P_{S_n}(u)$ coincides with the $G_{n}'$-orbit, it follows that $P_{S_n}(u)$ is $G_{n}$-polystable. We replace the metric $h$ by pullback from some element $g\in G_{n}$ such that with respect to this metric $g^{*}h$, $P_{S_n}(u)$ attains the minimal norm along its $G_{n}$-orbit. By abuse of notation, we still denote the updated metric by $h$.

Next, we show that solutions to \eqref{equation_face} and \eqref{equation_polytope_upper} exists. Recall that  $\{0\}\times [c_n,c_n']:=C_n\cap \{0\}\times \{\rho: \rho>0\}$. We note that $\dim q_n(C_n)\leq \dim C_n \leq \dim q_n(C_n) +1$. Consider the possibilty $\dim q_n(C_n)=\dim C_n$ first. In this case, $C_n=F_n$ is contained in a hyperplane defined by $\rho t+\langle \ell,x\rangle = c_n t$ for some $(x,t)\in X(G_{n}\times \C^{*})^{\vee}\otimes_{\Z}\Q-\{0\}$. Observe that $t\neq 0$, since otherwise, we have $\langle \ell, x\rangle=0$ for all $\ell\in C_{n}$, which contradicts $G_n$-stability of $u-\phi_n$. As $t\neq 0$, we obtain the unique normalized solution $(x_n,1)$, where this $x_n$ solves \eqref{equation_face} and \eqref{equation_polytope_upper}. By stability, $\dim G_{n+1}=0$, so we stop the iteration here. In this case, $u_{n+1}=u-\phi_n - P_{S_n}(u)=0$.

Suppose $\dim C_n= \dim q_n (C_n)+1$. We first consider the `homogenized' system of inequalities
    \begin{equation}
        \bigg(\rho+ \langle \ell,\sum_{i=-1}^{n-1}x_i\rangle\bigg)t+\langle \ell, x_n \rangle = c_n t
    \end{equation}
and
    \begin{equation}\label{equation_homo_polytope_upper}
        \bigg(\rho'+\langle \ell',\sum_{i=-1}^{n-1}x_i\rangle\bigg)t+\langle \ell', x_n\rangle > c_n t.
    \end{equation}
for $(\ell,\rho)\in F_{n}$ and $(\ell,\rho')\in C_{n}-F_{n}$. By the condition that $F_{n}$ is a face and all vertices of $C_{n}$ have rational coordinates, this system always has rational solutions $(x_{n},t)$. For all such $(x_n, t)$, $t\neq 0$. Otherwise, we have $\langle \ell'',x_{n}\rangle \geq 0$ for all $\ell''\in q_n \circ\pi_{n}(C_{n})$, but this contradicts $G_n$-stability of $u-\phi_n$. Moreover, as $(0,c_n')$ also satisfies \eqref{equation_homo_polytope_upper}, we have $c_n't>c_n t$. Hence, we can find a normalized solution $(x_n,1)$, where $x_n$ solves \eqref{equation_face} and \eqref{equation_polytope_upper}, once we obtain the following
\begin{lemma}
Suppose $\dim C_n= \dim q_n (C_n)+1$, then $c_n<c_n'$.
\end{lemma}
\begin{proof}
Note that if $(\ell,\rho)$ is an interior point of $C_{n}$, then the small variations $(\ell,\rho+\epsilon)$ stay within the interior of $C_{n}$ when $|\epsilon|$ is sufficiently small. For $\dim C_{n}\geq 3$, take an interior point $(\ell,\rho)$ of $C_{n}$, find a hyperplane containing $\ell$ and $0$ in $(i\mathfrak{k}_{n})^{*}$, say defined by $\ker x$ where $x\in i\mathfrak{k}_{n}-\{0\}$. It suffices to prove the statement on the restriction $\{(\ell,\rho): \langle \ell,x\rangle =0\}\cap C_{n}$. Inductively, we can reduce to the case where $\dim C_{n}=2$ and $\dim q_{n}(C_{n})=1$. In this case, the vertical line $\{(\ell,\rho): \ell=0\}$ separates $C_{n}$ into two portions. Both of them must contain interior points of $C_{n}$, since otherwise the closure of one of them is the whole $C_{n}$, leaving the other portion empty. Now, pick some interior points $(\ell_1, \rho_1)$ and $(\ell_2,\rho_2)$ from each portion. There exists $\epsilon>0$ such that the slight variation $(\ell_1,\rho_1+ t)$ and $(\ell_2,\rho_2+ t)$ remain in the same portion for $t\in (-\epsilon,\epsilon)$. Now, for $0\leq \lambda\leq 1$, the segments $(\lambda \ell_1+(1-\lambda)\ell_2,\lambda \rho_1+(1-\lambda) \rho_2+t)$ must cross the vertical line. Above the spot where $\lambda \ell_1+(1-\lambda)\ell_2=0$, we see a variation $\lambda \rho_1+(1-\lambda) \rho_2+t$ with $|t|<\epsilon$, so we must have $c_{n}<c_{n}'$.
\end{proof}
Therefore, we can find such $x_n$. The $n$-th iteration is thus completed. For the stabilizer $G_{n+1}$ of $P_{S_n}(u)$, if $\dim G_{n+1}\neq 0$, we let $u_{n+1}=u-\phi_n-P_{S_n}(u)$ and proceed to the $(n+1)$-iteration. Else, we stop. Since $P_{S_n}(u)$ is not fixed by $G_n$, $\dim G_{n+1}<\dim G_{n}$, so the process will stop at the $(k-1)$-iteration for some $k\geq 1$.

By the above construction, we have $$u = \sum_{i=0}^{k-1}\nu_i+\sum_{i=0}^{k-1}P_{S_{i}}(u)+u_{k}.$$
We let $x=\sigma \sum_{i=0}^{k-1} x_i$ for some $\sigma\in \Z_{>0}$ so that $X(G)^{\vee}$. Regarding $(x,\sigma)$ as a one-parameter subgroup, its action on $u$ gives $$(x(t),\sigma(t))\cdot u = \sum_{i=0}^{k-1}(x(t),\sigma(t))\nu_i +\sum_{i=0}^{k-1} t^{c_i\sigma} P_{S_i}(u) + (x,t)\cdot u_{k}.$$
As in the statement of the lemma, write $(x(t),\sigma(t))\cdot \nu_i = \sum_{j} t^{d_{ij}\sigma}\nu_{ij}$ and $(x(t),\sigma(t))\cdot = \sum_{j} t^{e_j\sigma} (u_k)_{j}$. By construction, we have the bounds $\min_{j} d_{ij} > c_{i-1}$ and $\min_{j} e_{j}>c_{k-1}$. Now, pick $\sigma$ so that all $c_i\sigma$, $d_{ij}\sigma$ and $e_{j}\sigma$ are integers. Finally, we let $d_i$ to be $c_i \sigma$.
   \end{proof}

With $(x,t)$ and $h$ determined in Proposition \ref{proposition_iterative_algorithm}, as a one-parameter subgroup of gauge transformations, we obtain a family of metrics $h'_R$ parametrized by $R>0$,
    \begin{equation}
h'_R := (x(R),\sigma(R))^{*}h.
    \end{equation}
For $v\in \Omega^{p,q}(\text{End}(E))$, we have
    \begin{equation}\label{equation_adjoint_h_R}
v^{*_{h'_R}} = \sum_{i,a,j,b} R^{2\sigma(b-a)+2\chi_i(x)-2\chi_j(x)} v_{jb|ia}^{*_h}.
    \end{equation}
Let $I(d)$ be the set of indices $(i,a,j,b)$ such that $2\sigma(b-a)+2\chi_i (x)-2\chi_j (x)=d$. Define the bundle
    \begin{equation}
Q_{d}:= \bigoplus_{(i,a,j,b)\in I(d)} \text{Hom}(E^{i}_{a},E^{j}_{b}).
    \end{equation}
Sections of $Q_{d} \otimes \bigwedge^{p,q}T^{*}X$ are said to have \emph{degree $d$}. Note that if for a section $v$ of degree $d$, we have $v^{*_{h'_R}}=R^{d}v^{*_h}$. Next, we let 
    \begin{equation}
\mathcal{F}^{p,q}_{d} = \text{ the set of sections of } \bigoplus_{d'\geq d}Q_{d} \otimes \bigwedge^{p,q}T^{*}X.    
    \end{equation}
When we do not distinguish the type of forms, we will omit the superscripts. The following are a few simple but important observations.
\begin{lemma}\label{lemma_F_basic_properties}
    \begin{enumerate}[label=(\roman*)]
        \item
$\mathcal{F}^{p,q}_{d}\cdot \mathcal{F}^{p',q'}_{d'}\subset \mathcal{F}^{p+p',q+q'}_{d+d'}$.
        \item
If $v\in \mathcal{F}^{p,q}_{d}$ and $d>0$, then $\tr \ v=0$.
        \item
Elements of $\mathcal{H}_{0}$ have degree zero.
        \item
$(u-\phi_{j})^{1,0}\in \mathcal{F}_{2d_{j}-2\sigma}$ for $j=1,\cdots,k-1$.
        \item
$(u-\phi_{j})^{0,1}\in \mathcal{F}_{2d_{j}}$ for $j=1,\cdots,k-1$.
    \end{enumerate}
\end{lemma}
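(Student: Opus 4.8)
The whole lemma rests on one observation: the degree attached to the block $\text{Hom}(E^i_a,E^j_b)$, namely $d(ia;jb) = 2\sigma(b-a) + 2\chi_i(x) - 2\chi_j(x)$, is of the telescoping form $2(\mu(ia)-\mu(jb))$ with $\mu(ia) := \chi_i(x) - \sigma a$. I would first record the consequences. Under the composition $\text{Hom}(E^i_a,E^j_b)\otimes \text{Hom}(E^j_b,E^m_c)\to \text{Hom}(E^i_a,E^m_c)$ the inner label $(j,b)$ cancels, so degrees add, while blocks with mismatched inner labels compose to $0$; since the product on $\sllie(E)$-valued forms combines composition with the wedge product and $\Omega^{p,q}\cdot\Omega^{p',q'}\subset\Omega^{p+p',q+q'}$, this gives (i) at once. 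For (ii), the fibrewise trace only detects the genuinely diagonal blocks $\text{Hom}(E^i_a,E^i_a)$, and $d(ia;ia)=0$; a form in $\mathcal{F}_d$ with $d>0$ has no degree-$0$ component and hence no diagonal component, so its trace vanishes. For (iii), Lemma \ref{lemma_h0_blockdiagonal} together with the standing abelian hypothesis forces each element of $\mathcal{H}_0$ to be block diagonal and scalar on every stable summand, i.e. supported on the $\text{Hom}(E^i_a,E^i_a)$, which are precisely the degree-zero blocks.

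The substance is (iv) and (v), and the plan is to convert the scaling data of Proposition \ref{proposition_iterative_algorithm} into statements about degree. The key is the dictionary supplied by \eqref{equation_adjoint_h_R}: the $(x,\sigma)$-weight of the $(0,1)$-variable $\beta_{jb|ia}$ is $\chi_i(x)-\chi_j(x)+\sigma(b-a) = \tfrac{1}{2}d(ia;jb)$, whereas the $(1,0)$-variable $\varphi_{jb|ia}$ carries the additional $\C^{*}$-twist and so has weight $\tfrac{1}{2}d(ia;jb)+\sigma$. Equivalently: a homogeneous $(0,1)$-piece of $(x,\sigma)$-weight $w$ has degree $2w$, and a homogeneous $(1,0)$-piece of weight $w$ has degree $2w-2\sigma$. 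Consequently it suffices to prove that every component of $u-\phi_j$ has $(x,\sigma)$-weight at least $d_j$: then $(u-\phi_j)^{0,1}$ has degree $\geq 2d_j$, which is (v), and $(u-\phi_j)^{1,0}$ has degree $\geq 2d_j-2\sigma$, which is (iv).

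Finally I would read the weight bound off the expansions \eqref{equation_rescaled_R}--\eqref{equation_rescaled_R_3}. Collecting the terms that survive in $u-\phi_j = P_{S_j}(u) + \sum_{i>j}P_{S_i}(u) + \sum_{i>j}\nu_i + u_k$, Proposition \ref{proposition_iterative_algorithm} says that under $(x(t),\sigma(t))$ the block $P_{S_j}(u)$ scales by exactly $t^{d_j}$, each $P_{S_i}(u)$ with $i>j$ by $t^{d_i}$ with $d_i>d_j$, each $\nu_i$ (for $i>j$) into pieces $t^{d_{i\ell}}$ with $d_{i\ell}>d_{i-1}\geq d_j$, and $u_k$ into pieces $t^{e_\ell}$ with $e_\ell>d_{k-1}\geq d_j$; hence every component has weight $\geq d_j$, as required. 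The only real difficulty is bookkeeping: one must keep the $G\times\C^{*}$-weight, which through the action on $\varphi$ contains the $\C^{*}$-twist distinguishing $(1,0)$ from $(0,1)$, carefully separated from the intrinsic block degree, and in particular not drop the resulting $-2\sigma$ shift on the Higgs-field part. Once the dictionary $\deg = 2w$ (respectively $2w-2\sigma$) is in place, (iv) and (v) are immediate.
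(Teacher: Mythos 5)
Your proposal is correct and follows essentially the same route as the paper: degrees add under block composition because the degree of $\text{Hom}(E^i_a,E^j_b)$ telescopes, the trace and $\mathcal{H}^0$ live on the degree-zero diagonal blocks, and (iv)--(v) come from translating the $(x,\sigma)$-weights of Proposition \ref{proposition_iterative_algorithm} into degrees via \eqref{equation_adjoint_h_R}. The only difference is that you spell out the weight bounds for all the terms $\nu_i$, $P_{S_i}(u)$ ($i>j$) and $u_k$ appearing in $u-\phi_j$, whereas the paper records only the borderline scaling of $P_{S_j}(u)$ and leaves that bookkeeping implicit.
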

\begin{proof}
For $v\in \mathcal{F}^{p,q}_{d}$ and $v'\in \mathcal{F}^{p',q'}_{d'}$, we have
    \begin{equation*}
(vv')_{jb|ia} = \sum_{j',b'} v_{jb|j'b'}v'_{j'b'|ia}.
    \end{equation*}
(i) then follows from \eqref{equation_adjoint_h_R}. (ii) and (iii) are true because $v_{ia|ia}$ has degree zero for all $i$ and $a$. For (iv) and (v), we note that $(P_{S_{j}}(u)^{1,0})^{*_{h'_R}}=R^{2d_{j}-2\sigma}(P_{S_{j}}(u)^{1,0})^{*}$ and $(P_{S_{j}}(u)^{0,1})^{*_{h'_R}}=R^{2d_{j}}(P_{S_{j}}(u)^{1,0})^{*}$.
\end{proof}

At degree zero, we define $Q_{0}(h)$ be the subbundle of $h$-hermitian endomorphisms of $Q_0$. Observe that $Q_{0}(h)=Q_{0}(h'_R)$ for all $R>0$. We define
    \begin{equation}
Q:=Q_0(h)\oplus\bigoplus_{i>0}Q_{i}.
    \end{equation}
Let
    \begin{equation}
p:\sllie(E)\to Q        
    \end{equation}
be the bundle morhpism given by projection. We let $P(E,h)$ be the space of $L^{2}_{k}$-sections of $Q$ where $k$ is sufficiently large. For $f\in P(E,h)$, we write $f=\sum f_{d}$, where $f_d$ is the degree $d$ component of $f$. There is an isomorphism from $P(E,h)$ to the space of $L^{2}_{k}$-sections of $h'_R$-hermitian endomorphism, given by
    \begin{equation}\label{equation_f_R_def}
        f\mapsto f_R:= f_0+ \sum_{d>0}f_d + \sum_{d>0} R^{d} f_{d}^{*}.
    \end{equation}
Note that the space
$$H_0 = \{v\in \mathcal{H}^{0}: v^{*}=v\}$$
is contained in $P(E,h)$, by (iii) of Lemma \ref{lemma_F_basic_properties}. Lastly, we let $H_1$ to be the $L^2$-orthogonal complement of $H_0$ in $P(E,h)$.

Now, we return to the problem of the existence of the conformal limit of $(\dbar_u,\Phi_u)$. For $R>0$, define the connections
    \begin{equation}
D_{R}:= \dbar_{u}+\partial^{h'_{R}}_{u}+\hbar^{-1}\Phi_{u}+\hbar (R^{2\sigma} \Phi_u)^{*_{h'_{R}}},
    \end{equation}
which has the limit 
    \begin{equation}
D_0:=\lim_{R\to 0}D_R=\dbar_{E}+\partial^{h}_{E}+\Phi_{u}+\Phi^{*}
    \end{equation}
since by \eqref{equation_rescaled_R}, \eqref{equation_rescaled_R_2} and \eqref{equation_rescaled_R_3} we have $$\partial^{h'_R}_{u}=\partial^{h}-\beta^{*_{h'_R}}\xrightarrow{R\to 0} \partial^{h}_{E}$$
and $$(R^{2\sigma}\Phi_{u})^{*h'_R}=\Phi^{*}+R^{2\sigma}\varphi^{*_{h'_R}}\xrightarrow{R\to 0}\Phi^{*}.$$
More genearlly, for $f\in P(E,h)$, we have the expressions
    \begin{align}
D_{R}(f) &:= \dbar_{u}+\partial^{(e^{f_R})^*h'_{R}}_{u}+\hbar^{-1}\Phi_{u}+\hbar (R^{2\sigma} \Phi_u)^{*_{(e^{f_R})^{*}h'_{R}}} \nonumber\\
&= \dbar_{u}+\partial^{h'_R}_{u}+e^{-2f_R}(\partial^{h'_R}_{u}e^{2f_R})+\hbar^{-1}\Phi_{u}+\hbar e^{-2f_R}(R^{2\sigma}\Phi_u)^{*_{h'_R}}e^{2f_R}. \label{equation_general_D_R}     
    \end{align}
For $f\in P(E,h)$, consider the following operator as in \cite{DFK+21} and \cite{CW19}:
    \begin{equation}\label{equation_N_functional_original}
N(f,R):= F_{(\dbar_{u},(e^{f_R})^{*}h'_R)}+[\Phi_{u},(R^{2\sigma}\Phi_u)^{*_{(e^{f_R})^{*}h'_R}}].
    \end{equation}
Recall that $F_{(\dbar_{u},(e^{f_R})^{*}h'_R)}$ refers to the curvature of the Chern connection associated with the complex structure $\dbar_{u}$ and metric $(e^{f_R}))^{*}h'_R$, where $f_R$ is defined in \eqref{equation_f_R_def}. Similar to \cite{CW19}, one compute that
    \begin{align*}
[\Phi_{u},(R^{2\sigma}\Phi_u)^{*_{(e^{f_R})^{*}h'_R}}] &= [\Phi_{u},e^{-2f_R}(\Phi^{*}+R^{2\sigma}\varphi^{*_{h'_R}})e^{2f_R}]\\
\dbar_{u}(e^{-2f_R}(\partial^{h'_R}_ue^{2f_R})) &= [\beta,e^{-2f_R}(\partial^{h}_{E}e^{2f_R})-e^{2f_R}\beta^{*_{h'_R}}e^{2f_R}]+[\beta,\beta^{*_{h'_R}}]+\dbar_{E}(e^{-2f_R}\partial^{h'_R}_{u}e^{2f_R}) \\
F_{(\dbar_{u},h'_R)} &= F_{(\dbar_{E},h)}+\partial^{h}_{E}\beta-\dbar_{E}\beta^{*_{h'_R}}-[\beta,\beta^{*_{h'_R}}].
    \end{align*}
So, we have
    \begin{align}
F_{(\dbar_{u},(e^{f_R})^{*}h'_R)} &= \dbar_{u}(e^{-2f_R}((\partial^{h}_{E}-\beta^{*_{h'_R}})e^{2f_R}))+F_{(\dbar_{u},h'_R)} \nonumber\\
&= [\beta,e^{-2f_R}(\partial^{h}_{E}e^{2f_R})-e^{-2f_R}\beta^{*_{h'_R}}e^{2f_R}]+F_{(\dbar_{E},h)}+\partial^{h}_{E}\beta-\dbar_{E}\beta^{*_{h'_R}}+\dbar_{E}(e^{-2f_R}\partial^{h'_R}_{u}e^{2f_R}). \label{equation_N_functional_RHS}
    \end{align}
While $h'_R$ is only defined for $R>0$, the right side of \eqref{equation_general_D_R} and \eqref{equation_N_functional_RHS}, and thus $D_R(f)$ and $N(f,R)$, are defined for all $f$ and $R$.

Lastly, for any $f\in H_0$, we compute that $$N(f,0)=F_{(\dbar_{E},h)}+\partial^{h}\beta+[\Phi_u, \Phi^{*}]=F_{(\dbar_{E},h)}+[\Phi,\Phi^{*}]+D'u=0.$$

As explained earlier, one would have shown that the $\hbar$-conformal limit of $(\dbar_{u},\Phi_u)$ is $D_0$ if there exists, locally around $R=0$, a family of solutions $R\mapsto f(R)$ with $f(0)=0$ such that $N(f(R),R)\equiv 0$. Unfortunately, the presence of the non-trivial $H_0$ obstructs the construction of such $f(R)$ by means of a direct application of the implicit function theorem. This issue will be addressed with the help of Proposition \ref{proposition_iterative_algorithm}.

First, we note that similar to Lemma \ref{lemma_open_stratum_1st_term_redundant}, the last few terms at the right side of \eqref{equation_N_functional_RHS} are in some sense `orthogonal' to $H_0$.

\begin{lemma}
For all $v\in H_0$, $\int_{X}\tr \  p\bigg(i(F_{(\dbar_{E},h)}+\partial^{h}\beta-\dbar_{E}\beta^{*_{h'_R}}+\dbar_{E}(e^{-2f_R}\partial^{h'_R}_{u}e^{2f_R}))\bigg)v=0$.
\end{lemma}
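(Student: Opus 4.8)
The four terms appearing here are precisely the terms of $F_{(\dbar_u,(e^{f_R})^{*}h'_R)}$ in \eqref{equation_N_functional_RHS} other than the commutator $[\beta,\cdots]$, so the lemma isolates the part of $N(f,R)$ that ought to be invisible to $H_0$. The plan is to treat the bracketed expression as $W=F_{(\dbar_E,h)}+\partial^{h}\beta-\dbar_E\beta^{*_{h'_R}}+\dbar_E(e^{-2f_R}\partial^{h'_R}_{u}e^{2f_R})$ and to show $\int_X\tr\, p(iW)v=0$ term by term, exactly in the spirit of Lemma \ref{lemma_open_stratum_1st_term_redundant}.

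The key input, which I would record first, is that every $v\in H_0\subset\mathcal{H}^{0}$ is parallel for the flat connection $D$. Indeed, by the K\"ahler identities the three Laplacians of the deformation complex coincide, so $\mathcal{H}^{0}=\ker D$; hence $\dbar_E v=\partial^{h}_{E}v=0$ and $[\Phi,v]=[\Phi^{*},v]=0$. I would also simplify the integrand at the outset: since $v$ is $h$-hermitian of degree $0$ while $\tr$ annihilates every component of nonzero degree, only the degree-$0$ part of $p(iW)$ survives the pairing, and on degree $0$ the map $p$ is the hermitian projection. This yields the reduction
$$\int_X\tr\, p(iW)v=-\text{Im}\int_X\tr(Wv),$$
so that it suffices to prove $\int_X\tr(Wv)=0$.

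With this in hand each term is routine. For the curvature term I would invoke the Hitchin equation $F_{(\dbar_E,h)}=-[\Phi,\Phi^{*}]$ and then observe that $\tr([\Phi,\Phi^{*}]v)=0$ pointwise, by cyclicity of the trace together with $[\Phi,v]=[\Phi^{*},v]=0$. For the three remaining terms I would integrate by parts using the parallelism of $v$: for any $(1,0)$-form $A$ one has $\tr((\dbar_E A)v)=\dbar\,\tr(Av)=d\,\tr(Av)$ (the $(2,0)$ part vanishing on a curve), so Stokes kills it; this disposes of $-\dbar_E\beta^{*_{h'_R}}$ and $\dbar_E(e^{-2f_R}\partial^{h'_R}_{u}e^{2f_R})$ verbatim as in Lemma \ref{lemma_open_stratum_1st_term_redundant}, where the inner expressions only need to be recognized as $(1,0)$-forms. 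The term $\partial^{h}\beta$ is handled symmetrically, writing $\tr((\partial^{h}\beta)v)=\partial\,\tr(\beta v)=d\,\tr(\beta v)$ via $\partial^{h}_{E}v=0$ and the vanishing of the $(0,2)$ part.

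The only genuine subtlety, and the step I would be most careful about, is the first one: producing the full parallelism $\partial^{h}_{E}v=0$ and $[\Phi^{*},v]=0$, not merely $\dbar_E v=0$ and $[\Phi,v]=0$. The $\partial^{h}$-integration by parts for the $\partial^{h}\beta$ term needs $\partial^{h}_E v=0$, and the vanishing of the curvature contribution needs $[\Phi^{*},v]=0$; this is precisely where the harmonic-metric hypothesis enters. The bookkeeping for the projection $p$ onto $Q$ (rather than the plain hermitian projection of the open-stratum case) is the other point to verify, but it reduces, as indicated above, to the single observation that the trace pairing with $v$ sees only the degree-$0$ component.
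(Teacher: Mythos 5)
Your proof is correct and follows essentially the same route as the paper's: reduce to the degree-zero hermitian component via the trace pairing with $v$, use $D''v=D'v=0$ to integrate the three derivative terms by parts (as in Lemma \ref{lemma_open_stratum_1st_term_redundant}), and kill the curvature term with the Hitchin equation together with $[\Phi,v]=[\Phi^{*},v]=0$. You are also right to flag the full parallelism of $v$ as the key input — the paper invokes it in exactly the same way.
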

\begin{proof}
Recall that $D''v=0$ and $D'v=0$. For any $A\in \Omega^{1}(\text{End}(E))$, we have $\int_{X}\tr \ p(\dbar_{E}A)v=\int_{X}\tr \ p(\dbar_{E}A_0)v$, where $A_0$ is the degree zero component of $A$. Thus, similar to the proof of Lemma \ref{lemma_open_stratum_1st_term_redundant}, we have $\int_{X}\tr \ p(\dbar_{E}A)v=0$. The case for $\int_{X}\tr \ p (\partial^{h}_{E}A)v$ is similar. Hence, the last three terms vanish. For the first term, recall that $iF_{(\dbar_{E},h)}=-i[\Phi,\Phi^{*}]$ and is $h$-hermitian. So $\int_{X}\tr \ p(-i[\Phi,\Phi^{*}])v =-i\int_{X}\tr \ [\Phi,\Phi^{*}]v=-i\int_{X}\tr \ \Phi^{*}[\Phi, v]=0$.
\end{proof}

Next, we will consider the functional $n: H_1\times H_0 \times \R \to H_0^{\vee}$, defined by
        \begin{align}
            n(f,g,R)(v) &:= \int_{X} \tr \ p\big(iN(f,g,R)\big)v \\
&= \int_{X} \tr \ \big\{p\bigg(i[\beta+\Phi_u, e^{-2f_R}(R^{2\sigma}\Phi_u-\beta)^{*_{h'_R}})e^{2f_R}+e^{-2f_R}(\partial^{h}_{E}e^{2f_R})]\bigg)v\big\} \label{equation_n_functional}\\
&= \text{Re}\int_{X}\tr \ i[\beta+\Phi_u, e^{-2f_R}(R^{2\sigma}\Phi_u-\beta)^{*_{h'_R}})e^{2f_R}+e^{-2f_R}(\partial^{h}_{E}e^{2f_R})]v. \label{equation_remove_p}
        \end{align}
Here, we have adopted the following conventions on notation. First, we write $N(f,g,R)=N(f+g,R)$ to emphasize that we treat $f$ and $g$ as two independent variables. However, to save spaces, we also denote $(f+g)_R=f_R+g_R=f_R+g$ simply by $f_R$.

\begin{lemma}\label{lemma_power_R}
For $n\geq 0$, we have
    \begin{enumerate}[label=(\roman*)]
        \item
$\dfrac{d^{n}}{dR^{n}}\bigg|_{R=0}2f_{R}\in \mathcal{F}^{0,0}_{-n}$
        \item
$\dfrac{d^{n}}{dR^{n}}\bigg|_{R=0}e^{2f_{R}}\in \mathcal{F}^{0,0}_{-n}$
        \item
$\dfrac{d^{n}}{dR^{n}}\bigg|_{R=0}\partial^{h}_{E}e^{2f_R}\in \mathcal{F}^{1,0}_{-n}$
        \item
$\dfrac{d^{n}}{dR^{n}}\bigg|_{R=0}\beta^{*_{h'_R}}\in \mathcal{F}^{1,0}_{-n}$
        \item
$\dfrac{d^{n}}{dR^{n}}\bigg|_{R=0} R^{2\sigma}\Phi_u^{*_{h'_R}}\in \mathcal{F}^{0,1}_{2\sigma-n}$.
    \end{enumerate}
\end{lemma}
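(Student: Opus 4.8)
The plan is to prove (i) directly from the defining formula \eqref{equation_f_R_def} for $f_{R}$, to bootstrap (ii) and (iii) from (i) using the generalized Leibniz rule together with the multiplicativity of the filtration recorded in Lemma \ref{lemma_F_basic_properties}(i), and to settle (iv)--(v) by expanding the relevant adjoints into $R$-homogeneous monomials and reading off the coefficient of $R^{n}$. For (i), I would write $2f_{R}=2f_{0}+2\sum_{d>0}f_{d}+2\sum_{d>0}R^{d}f_{d}^{*}$ from \eqref{equation_f_R_def}; the first two groups are $R$-independent and of degree $\geq 0$, while each $f_{d}^{*}$ is homogeneous of degree $-d$. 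Thus $\frac{d^{n}}{dR^{n}}\big|_{R=0}$ annihilates every monomial $R^{d}f_{d}^{*}$ except the one with $d=n$, returning $2\,n!\,f_{n}^{*}$ of degree $-n$, while for $n=0$ it returns the $R$-independent part, of degree $\geq 0=-0$. Either way the output lies in $\mathcal{F}^{0,0}_{-n}$, which is (i).

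Next, for (ii) I would expand $e^{2f_{R}}=\sum_{m\geq 0}\frac{1}{m!}(2f_{R})^{m}$ and apply the generalized Leibniz rule to each matrix power, so that $\frac{d^{n}}{dR^{n}}\big|_{R=0}(2f_{R})^{m}$ is a finite sum of ordered products of $m$ factors $\frac{d^{n_{\ell}}}{dR^{n_{\ell}}}\big|_{R=0}2f_{R}$ with $n_{1}+\cdots+n_{m}=n$. By (i) the $\ell$-th factor lies in $\mathcal{F}^{0,0}_{-n_{\ell}}$, so by Lemma \ref{lemma_F_basic_properties}(i) each product lies in $\mathcal{F}^{0,0}_{-n}$, and since this is a vector space the sum stays there; note that the non-commutativity of the matrix exponential is harmless, as only multiplicativity of the filtration, not commutativity, is used. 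For (iii) I would commute the $R$-independent operator $\partial^{h}_{E}$ past the derivative and use that $\partial^{h}_{E}$ preserves the grading: because the harmonic metric $h$ was chosen to make the summands $E^{i}_{a}$ of the SHB decomposition mutually orthogonal holomorphic subbundles, the Chern connection of $(E,\dbar_{E},h)$ is block diagonal with respect to $E=\bigoplus_{i,a}E^{i}_{a}$, so the induced operator on $\text{End}(E)$ preserves each $\text{Hom}(E^{i}_{a},E^{j}_{b})$ and hence maps $\mathcal{F}^{0,0}_{d}$ into $\mathcal{F}^{1,0}_{d}$. Applying this to the conclusion of (ii) gives (iii).

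For (iv) and (v) I would invoke Proposition \ref{proposition_iterative_algorithm}: all exponents occurring in $(x(t),\sigma(t))\cdot u$ are strictly positive, so every homogeneous piece of $\beta=u^{0,1}$ has strictly positive degree and every homogeneous piece of $\varphi=u^{1,0}$ has degree $>-2\sigma$ (this is also the content of Lemma \ref{lemma_F_basic_properties}(iv),(v)). Writing $\beta=\sum_{m>0}\beta_{m}$ and using \eqref{equation_adjoint_h_R} in the homogeneous form $\beta_{m}^{*_{h'_{R}}}=R^{m}\beta_{m}^{*_{h}}$, one gets the polynomial expansion $\beta^{*_{h'_{R}}}=\sum_{m>0}R^{m}\beta_{m}^{*_{h}}$, whose $R^{n}$-coefficient $\beta_{n}^{*_{h}}$ is of degree $-n$ and type $(1,0)$; hence $\frac{d^{n}}{dR^{n}}\big|_{R=0}\beta^{*_{h'_{R}}}=n!\,\beta_{n}^{*_{h}}\in\mathcal{F}^{1,0}_{-n}$ (the value being $0$ at $n=0$), giving (iv). For (v) I would use the recorded identity $(R^{2\sigma}\Phi_{u})^{*_{h'_{R}}}=\Phi^{*}+R^{2\sigma}\varphi^{*_{h'_{R}}}$: the first summand is $R$-independent, and combining this identity with the homogeneity rule $v^{*_{h'_{R}}}=R^{\deg v}v^{*_{h}}$ forces $\Phi$ to have degree $-2\sigma$, so $\Phi^{*}$ is homogeneous of degree $2\sigma$ and type $(0,1)$, contributing only at $n=0$ and landing in $\mathcal{F}^{0,1}_{2\sigma}$. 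The second summand expands as $\sum_{m}R^{2\sigma+m}\varphi_{m}^{*_{h}}$ with $2\sigma+m>0$, whose $R^{n}$-coefficient $\varphi_{n-2\sigma}^{*_{h}}$ is of degree $2\sigma-n$ and type $(0,1)$. Both contributions lie in $\mathcal{F}^{0,1}_{2\sigma-n}$, which is (v).

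The genuinely structural inputs are the grading-compatibility of $\partial^{h}_{E}$ in (iii)---exactly where the special choice of harmonic metric orthogonalizing the SHB summands is used---and the strict positivity of the exponents supplied by Proposition \ref{proposition_iterative_algorithm}. The hard part will be this positivity: it is what guarantees that $\beta^{*_{h'_{R}}}$ and $R^{2\sigma}\varphi^{*_{h'_{R}}}$ carry no negative powers of $R$ and hence extend smoothly to $R=0$, and it is what pins down the degree of each surviving monomial. Everything else is the routine bookkeeping of differentiating $R$-power series term by term and invoking the multiplicativity of Lemma \ref{lemma_F_basic_properties}(i).
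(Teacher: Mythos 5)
Your proof is correct and follows essentially the same route as the paper's: (i), (iv) and (v) from the homogeneity rule $v^{*_{h'_R}}=R^{\deg v}v^{*_h}$ applied to the term-by-term $R$-power expansions, (ii) from the Leibniz rule on the exponential series combined with Lemma \ref{lemma_F_basic_properties}(i), and (iii) from the fact that $\partial^{h}_{E}$ preserves degrees. The only difference is that you spell out the justifications the paper leaves implicit (block-diagonality of the Chern connection with respect to the orthogonal SHB summands, and the strict positivity of the exponents from Proposition \ref{proposition_iterative_algorithm} that guarantees smoothness at $R=0$), which is harmless elaboration rather than a different argument.
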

\begin{proof}
(i), (iv), and (v) follow from $f_{d}^{*}\in \mathcal{F}^{0}_{-d}$. For (ii), first we have $$\dfrac{d^{n}}{dR^{n}}\bigg|_{R=0} \dfrac{(2f_R)^{j}}{j!}=\sum_{(j_k)\in \mathcal{P}(n)} \prod_{k}\dfrac{1}{j_k !}\dfrac{d^{j_k}}{dR^{j_k}}\bigg|_{R=0}2f_{R}.$$
Here, $\mathcal{P}(n)$ denotes the set of tuples $(j_k)_{k=1}^{j}$ of integers such that $j_k\geq 0$ and $\sum_{k=1}^{j}j_{k}=n$. Then, apply (i) and Lemma \ref{lemma_F_basic_properties}. Finally, for (iii), just note that $\partial^{h}_{E}$ preserves degrees.
\end{proof}

We finally arrive at the second important technical input for the main result of this section:

\begin{theorem}\label{theorem_n_functional_perp}
There exists a smooth map $g: U\times (-\epsilon,\epsilon)\to H_0$, where $U$ is an open neighborhood of $0$ in $H_1$ and $\epsilon>0$, such that $g(0,0)=0$ and $n(f,g(f,R),R)\equiv 0$.
\end{theorem}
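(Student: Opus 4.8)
The plan is to solve the equation $n(f,g,R)=0$ for $g$ as a function of $(f,R)$ by the implicit function theorem, in the spirit of Proposition \ref{proposition_open_stratum_perp_image}; the first point to record is that a naive application fails and must be corrected using the multi-scale data of Proposition \ref{proposition_iterative_algorithm}. First I would check $n(0,0,0)=0$. In fact the computation $N(f,0)=F_{(\dbar_E,h)}+[\Phi,\Phi^*]+D'u=0$, which was shown to hold for \emph{every} $f\in H_0$, gives the stronger statement $n(0,g,0)\equiv 0$ for all $g\in H_0$: since the terms discarded in passing to \eqref{equation_remove_p} are $H_0$-orthogonal (the lemma preceding the theorem, analogous to Lemma \ref{lemma_open_stratum_1st_term_redundant}), we have $n(0,g,0)(v)=\int_X\tr\,p(iN(g,0))v=0$. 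Consequently the partial differential $D_g n|_{(0,0,0)}$ vanishes identically, and one cannot invert it at $R=0$. This is exactly the obstruction flagged before the theorem, and it reflects the residual $G$-symmetry of the harmonic metric along the central fibre.

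To break this degeneracy I would exploit the filtration $G=G_0\supset G_1\supset\cdots\supset G_k=\{1\}$ and the integers $0<d_0<\cdots<d_{k-1}$ produced by Proposition \ref{proposition_iterative_algorithm}. Decompose $H_0=\bigoplus_{i=0}^{k-1}H_0^{(i)}$ along the Lie algebras $i\mathfrak{k}_i'$ of the successive complementary subtori $G_i'$ (with $i\mathfrak{k}_i'=i\mathfrak{k}_i\ominus i\mathfrak{k}_{i+1}$), and write $g=\sum_i g_i$ accordingly. Although each direction in $H_0$ is invisible to $n$ at $R=0$, the direction $g_i$ is \emph{activated} precisely at the scale $R^{2d_i}$: by Lemma \ref{lemma_F_basic_properties}(iv)--(v) the relevant adjoints restricted to the weight set $S_i$ satisfy $(P_{S_i}(u)^{0,1})^{*_{h'_R}}=R^{2d_i}(\cdot)^{*_h}$ and $R^{2\sigma}(P_{S_i}(u)^{1,0})^{*_{h'_R}}=R^{2d_i}(\cdot)^{*_h}$, so that both types contribute to $n$ at the \emph{same} order $R^{2d_i}$, and the $(H_0^{(i)})^\vee$-component of $n$ begins at that order. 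I would therefore pass to a rescaled functional $\hat n$ obtained by dividing the $(H_0^{(i)})^\vee$-component by $R^{2d_i}$ and rescaling the variables $g_i$ by the matching powers of $R$; Lemma \ref{lemma_power_R} is used to verify that each such quotient extends smoothly across $R=0$, so $\hat n(f,g,R)$ is smooth at $R=0$ and has the same zero locus as $n$ for $R\neq 0$.

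The decisive computation is the $g$-differential of $\hat n$ at the origin. Ordering the blocks by the filtration, one shows $D_g\hat n|_{(0,0,0)}$ is block lower-triangular: since $P_{S_i}(u)$ is fixed by $G_{i+1}$, one has $[P_{S_i}(u),g_j]=0$ for $j>i$, so directions in $i\mathfrak{k}_{i+1}$ do not enter the $i$-th block, while the strict inequalities $d_0<\cdots<d_{k-1}$ and the affine positioning of the $S_i$ control the lower couplings. Its $i$-th diagonal block is, up to the fixed constant, the Hessian at the origin of the Kempf--Ness functional $\ell_{P_{S_i}(u)}$ restricted to $i\mathfrak{k}_i'$, namely $w\mapsto\big(v\mapsto\int_X\tr[P_{S_i}(u),[P_{S_i}(u)^{*_h},w]]v\big)$. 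Because Proposition \ref{proposition_iterative_algorithm} guarantees that $P_{S_i}(u)$ is $G_i'$-stable and attains minimal $h$-norm along its $G_i'$-orbit at the chosen metric $h$, Proposition \ref{prop-convexhull} (equivalently the argument of Lemma \ref{lemma_local_minima}(2) applied on each subtorus) shows this Hessian is non-degenerate on $i\mathfrak{k}_i'$. Hence every diagonal block, and so the whole triangular operator $D_g\hat n|_{(0,0,0)}$, is an isomorphism $H_0\to H_0^\vee$. The implicit function theorem then yields a smooth $g\colon U\times(-\epsilon,\epsilon)\to H_0$ with $g(0,0)=0$ and $\hat n(f,g(f,R),R)\equiv 0$, whence $n(f,g(f,R),R)\equiv 0$ for $R\neq 0$ and, by continuity, for all small $R$.

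The main obstacle I anticipate is not the final implicit function theorem step but the bookkeeping underlying it: verifying that the blockwise division by $R^{2d_i}$ together with the variable rescalings produces a genuinely smooth map at $R=0$, and that the resulting differential is exactly block lower-triangular with the Kempf--Ness Hessians on the diagonal. This requires a careful expansion of $e^{-2f_R}(R^{2\sigma}\Phi_u-\beta)^{*_{h'_R}}e^{2f_R}+e^{-2f_R}\partial^h_E e^{2f_R}$ in powers of $R$ via Lemma \ref{lemma_power_R}, combined with the grading multiplicativity of Lemma \ref{lemma_F_basic_properties}(i), to track which scale each mixed commutator contributes to and to confirm that the cross-terms coming from $g_j$ with $j<i$ enter the $i$-th block only at the permitted (triangular) order. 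It may be cleanest to organize this as an iterative reduction mirroring Proposition \ref{proposition_iterative_algorithm}, solving for $g_0$ at scale $d_0$, then $g_1$ at scale $d_1$, and so on, checking at each stage that the already-determined lower pieces feed into the next block only through higher-order remainders.
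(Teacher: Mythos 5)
Your proposal is correct and follows essentially the same route as the paper: after observing that $n(0,g,0)\equiv 0$ kills the naive implicit function theorem, you divide the $(i\mathfrak{k}_j')^{\vee}$-block of $n$ by $R^{2d_{j-1}}$ (using Lemmas \ref{lemma_F_basic_properties} and \ref{lemma_power_R} and the identity $[\phi_{j-1},v]=0$ for $v\in i\mathfrak{k}_j'$ to see the quotient extends smoothly to $R=0$), identify the diagonal blocks with the Kempf--Ness Hessians of $P_{S_{j-1}}(u)$, which are non-degenerate by the metric normalization in Proposition \ref{proposition_iterative_algorithm}, and use the $G_{j}$-invariance of $P_{S_{j-1}}(u)$ for triangularity. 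The only differences are organizational: the paper runs the implicit function theorem blockwise to produce maps $\tau_j(f,\hat g_j,R)$ and then applies it once more to $F=(\tau_1,\dots,\tau_k)-\mathrm{pr}_{H_0}$, whose differential is block lower-triangular with $-\mathrm{id}$ on the diagonal, rather than to a single globally rescaled functional; and your additional ``rescaling of the variables $g_i$ by matching powers of $R$'' should be dropped --- after dividing the output block by $R^{2d_{j-1}}$ the dependence on $g_j$ already survives at $R=0$ (cf.\ \eqref{equation_n_tilde_vary_g}), and a genuine input rescaling would either destroy smoothness at $R=0$ or degenerate the diagonal block.
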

\begin{proof}
Note that $[\Phi,v]=0$. For any $A\in \Omega^{1}(\text{End}(E))$, $\tr \  [\Phi,A]v=\tr \ A[\Phi,v]=0$. Hence, we can drop the Higgs field $\Phi$ in the term $[\beta+\Phi+\varphi, e^{-2f_R}(R^{2\sigma}\Phi_u-\beta)^{*_{h'_R}})e^{2f_R}+e^{-2f_R}(\partial^{h}e^{2f_R})].$

Recall that from Proposition \ref{proposition_iterative_algorithm}, we have determined a decreasing chain of tori $G=G_0\supset G_1\supset\cdots\supset G_k=\{1\}$, with Lie algebras denoted by $\text{Lie}(G_i)=\mathfrak{k}_{i}\otimes_{\R}\C$ for $i=0,\cdots, k$. Let $\mathfrak{k}_{i}'\otimes_{\R}\C$ be the Lie algebra of a complementary torus of $G_i$ in $G_{i-1}$. Then $H_0$ admits the following decomposition $$H_0=\bigoplus_{j=1}^{k}\sqrt{-1}\mathfrak{l}_{j}'.$$

For each summand $W$ of $H_0$, let $n_{W}$ be the restriction of $n$ to $W$ and $W^{\vee}$. For $1\leq j\leq k$ and $v\in i\mathfrak{k}'_j$, $[\phi_{j-1},v]=0$. Hence, by the same argument for $\Phi$ above, we have, for $g, v\in i\mathfrak{k}'_{j}$,
    \begin{equation}
        n_{i\mathfrak{k}_{j}'}(f,g,R)(v) = \text{Re}\int_{X} \tr \ i[u-\phi_{j-1}, e^{-2f_R}(R^{2\sigma}\Phi_u-\beta)^{*_{h'_R}})e^{2f_R}+e^{-2f_R}(\partial^{h}e^{2f_R})]v.
    \end{equation}
From Lemma \ref{lemma_F_basic_properties}, $(u-\phi_{j-1})^{1,0}\in \mathcal{F}_{2d_{j-1}-2\sigma}$ and $(u-\phi_{j-1})^{0,1}\in \mathcal{F}_{2d_{j-1}}$. By Lemma \ref{lemma_power_R}, we then have
    \begin{equation}
\dfrac{d^{n}}{dR^{n}}\bigg|_{R=0} [u-\phi_{j-1}, e^{-2f_R}(R^{2\sigma}\Phi_u-\beta)^{*_{h'_R}})e^{2f_R}+e^{-2f_R}(\partial^{h}e^{2f_R})]v\in \mathcal{F}^{1,1}_{2d_{j-1}-n}.
    \end{equation}
Using (ii) of Lemma \ref{lemma_F_basic_properties}, it follows that the functional
    \begin{equation}\label{equation_n_tilde}
\tilde{n}_{j}:= R^{-2d_{j-1}}n_{i\mathfrak{k}_{j}'}        
    \end{equation}
is well-defined everywhere. We compute
    \begin{align*}
n_{i\mathfrak{k}_{j}'}(0,g,R)(v) &= \text{Re}\int_{X} \tr \ i[u-\phi_{j-1}, e^{-2g}(R^{2\sigma}\Phi_u-\beta)^{*_{h'_R}})e^{2g}]v\\
&= \text{Re}\int_{X} \tr \ i[u-\phi_{j-1}, \Phi^{*}+e^{-2g}(R^{2\sigma}\varphi-\beta)^{*_{h'_R}})e^{2g}]v \\
&= \text{Re}\int_{X} \tr \ i[u-\phi_{j-1}, e^{-2g}(R^{2\sigma}\varphi-\beta)^{*_{h'_R}})e^{2g}]v.
    \end{align*}
Notice that the dependence on $R$ only occurs in the term $(R^{2\sigma}\varphi-\beta)^{*_{h'_R}}$. Therefore, from \eqref{equation_n_tilde}, we obtain
    \begin{equation}
n_{i\mathfrak{k}'_{j}}(0,g,R)(v) = \text{Re}\int_{X}\tr \ i[u-\phi_{j-1},e^{-2g}\big(R^{2\sigma}(u-\phi_{j-1})^{1,0}-(u-\phi_{j-1})^{0,1}\big)^{*_{h'_R}}e^{2g}]v,
    \end{equation}
from which we have
\begin{equation}\label{equation_n_tilde_vary_g}
\tilde{n}_{j}(0,g,0)(v) = \text{Re}\int_{X}\tr \ i[u-\phi_{j-1},e^{-2g}\bigg(\big(P_{S_{j-1}}(u))^{1,0}\big)^{*}-\big((P_{S_{j-1}}(u))^{0,1}\big)^{*}\bigg)e^{2g}]v.
    \end{equation}
In particular, putting $g=0$ and $R=0$, we have
    \begin{align*}
\tilde{n}_{j}(0,0,0)(v) &= \text{Re}\int_{X} \tr \ i[u-\phi_{j-1},\big((P_{S_{j-1}}(u))^{1,0}\big)^{*}-\big((P_{S_{j-1}}(u))^{0,1}\big)^{*}]v  \\
&= \text{Re}\int_{X} \tr \ i[P_{S_{j-1}}(u),\big((P_{S_{j-1}}(u))^{1,0}\big)^{*}-\big((P_{S_{j-1}}(u))^{0,1}\big)^{*}]v\\
&= \dfrac{1}{2}D_{0}\ell_{P_{S_{j-1}}(u)}(v),
    \end{align*}
where $\ell_{P_{S_{j-1}}(u)}: i\mathfrak{k}'_{j}\to \R$ is the Kempf-Ness functional (cf. \eqref{equation_KN_functional}) of $P_{S_{j-1}}(u)$ given by
$$\ell_{P_{S_{j-1}}(u)}(g)=||e^{g}P_{S_{j-1}}(u)e^{-g}||^{2}.$$
By the choice of the harmonic metric $h$ stipulated in Proposition \ref{proposition_iterative_algorithm}, and applying the Kempf-Ness theorem as in the proof of Lemma \ref{lemma_local_minima}, we have \begin{inparaenum}[(i)]
    \item $\tilde{n}_{j}(0,0,0)=0$, and
    \item the differential at $0$ along the $i\mathfrak{k}_{j}'$-direction is non-degenerate.
\end{inparaenum} Hence, by the implicit function theorem, there exists an open neighborhood $U_j$ of $0$ in $H_1\oplus \bigoplus_{j'\neq j}i\mathfrak{k}'_{j'}\oplus \R$, such that there is a unique map
    \begin{equation}
\tau_{j}:U_j\to i\mathfrak{k}_{j}',
    \end{equation}
that satisfies $$\tau_{j}(0,0,0)=0, \text{ and }$$
$$\tilde{n}_{j}(f,\hat{g}_{j}+\tau_j (f,\hat{g}_j,R),R)\equiv 0,$$
where we have made the following convention on the notation used: for $g\in H_0$, we write $g=\sum_{j'=1}^{k}g_{j'}$ where $g_{j'}\in i\mathfrak{k}_{j'}'$ for each $j'=1,\cdots, k$, and $\hat{g}_{j}:=g-g_{j}$. Putting the factor $R^{2d_{j-1}}$ back, we see that the same holds for $n_{i\mathfrak{k}_{j}'}$.

If $k=1$, taking $g(f,R)$ to be $\tau_1$ will finish the proof. In what follows, we assume that $k\geq 2$. Recall that $P_{S_{j-1}}(u)$ is fixed by $G_{j}$. For each $j< k$ and $v\in i\mathfrak{k}'_{j}$, from \eqref{equation_n_tilde_vary_g} it follows that for all $v$,
$$\tilde{n}_{j}(0,\sum_{j'>j}g_{j'},0)(v)\equiv 0.$$
By uniqueness of $\tau_{j}$, we have
$$D_{(0,0,0)}\tau_{j}(\sum_{j'>j}w_{j'}) \equiv 0$$
for $w_{j'}\in i\mathfrak{k}_{j'}'$, $j'=j+1,\cdots, k$.

Finally, consider the map $$F=(\tau_1,\cdots, \tau_k)-\text{pr}_{H_0}: \bigcap_{j=1}^{k}(U_{j}\times i\mathfrak{k}'_{j})\to H_0,$$
where $\text{pr}(f,g,R):= g$ is the projection map. From the above, it follows that $D_{(0,0,0)}F\big|_{H_0}$ is block lower-triangular with $-\text{id}$ on the diagonal. Restricting to a product neighborhood and applying the implicit function theorem, there exists a map $$g: U\times (-\epsilon,\epsilon)\to H_0 $$ from an open neighborhood of $0$ of $H_1\times \R$ to $H_0$ such that $g(0,0)=0$ and $F(f,g(f,R),R)\equiv 0$. In other words, $\tau_j(f,\hat{g}_j(f,R),R)\equiv g_{j}(f,R)$ for each $j$. For each $j$ and $v\in i\mathfrak{k}'_{j}$,
$$n(f,g(f,R),R)(v)=n_{i\mathfrak{k}'_{j}}(f,\hat{g}_{j}+\tau_{j},R)(v)=0.$$
Therefore, $n(f,g(f,R),R)\equiv 0$.
\end{proof}

\begin{corollary}\label{corollary_conformal_abelian}
The $\hbar$-conformal limit of $(\dbar_{u},\Phi_u)\in S_{+}$ is $D_0=\dbar_{E}+\partial^{h}_{E}+\hbar^{-1}\Phi_{u}+\hbar \Phi^{*}$.
\end{corollary}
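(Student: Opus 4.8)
The plan is to run the argument of Section~\ref{section_open_stratum} essentially verbatim, with Theorem~\ref{theorem_n_functional_perp} playing the role that Proposition~\ref{proposition_open_stratum_perp_image} plays in the $\Phi=0$ case. Theorem~\ref{theorem_n_functional_perp} already supplies, for $f$ in a neighborhood $U$ of $0$ in $H_1$ and $|R|<\epsilon$, a smooth map $g(f,R)\in H_0$ with $g(0,0)=0$ for which $n(f,g(f,R),R)\equiv 0$; equivalently, the pairing of $p\big(iN(f+g(f,R),R)\big)$ against every element of $H_0$ vanishes. Hence the reduced map
$$N'(f,R):=p\big(iN(f+g(f,R),R)\big)$$
takes values in the Hodge dual of $H_1$, and the problem is reduced to solving $N'(f,R)=0$ for a family $f=f(R)\in H_1$ with $f(0)=0$.

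I would then linearize $N'$ at $(0,0)$ along $H_1$. The metric variation contributes $i\dbar_E\partial^h_E f$ from the curvature and $[\Phi,[\Phi^{*},f]]$ from the Higgs bracket (the latter absent in Section~\ref{section_open_stratum}, where $\Phi=0$), while the differential of $g$ lands in $\mathcal{H}^0$ and is annihilated by the Laplacian, so it drops out. The resulting operator is the full deformation-complex Laplacian rather than $i\dbar_E\partial^h_E$ alone; it is elliptic of index zero, and its kernel among hermitian sections of $Q$ is exactly $H_0$, so it is invertible on $H_1$. The implicit function theorem then produces $f(R)\in H_1$ with $f(0)=0$ and $N'(f(R),R)=0$ for small $R$. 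It remains to upgrade $p\big(iN(f(R)+g,R)\big)=0$ to $N\equiv 0$. For this I would invoke the graded analogue of Lemma~\ref{lemma_open_stratum_small_projection}: the form $iN(f(R)+g,R)$ is hermitian with respect to $(e^{f_R+g_R})^{*}h'_R$, its projection $p(\,\cdot\,)$ onto the model bundle $Q$ of $h'_R$-hermitian endomorphisms vanishes, and since $f(R),g\to 0$ in $L^2_k\subset C^0$ the gauge $e^{f_R+g_R}$ is $C^0$-close to the identity and is traceless, so the two metrics share a volume form. The pointwise isomorphism of hermitian projections on the compact surface $X$ then forces $N(f(R)+g(f(R),R),R)\equiv 0$ for $R$ small.

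With $N\equiv 0$ the corrected metric $(e^{f_R+g_R})^{*}h'_R$ solves the Hitchin equation for $(\dbar_u,R^{\sigma}\Phi_u)$, so the connections $D_R\big(f(R)+g(f(R),R)\big)$ are flat; since $f(R)+g\to 0$ they converge as $R\to 0$ to $D_0=\dbar_E+\partial^h_E+\hbar^{-1}\Phi_u+\hbar\Phi^{*}$, which, after the reparametrization $R\mapsto R^{\sigma}$ identifying this family with the defining family of the conformal limit of $(\dbar_u,\Phi_u)$, exhibits $D_0$ as the $\hbar$-conformal limit. The principal obstacle throughout is the interaction between the degenerating family $h'_R$ and the SHB grading: one must verify that the linearization really collapses to the invertible deformation Laplacian once all positively graded pieces and the $R$-dependent terms in $f_R$ are accounted for---this is exactly where the degree bookkeeping of Lemma~\ref{lemma_power_R} and the multiplicativity of the filtration $\mathcal{F}^{p,q}_d$ (Lemma~\ref{lemma_F_basic_properties}) are used---and that the closeness-of-metrics argument survives the grading, so that vanishing of the $Q$-projection genuinely controls every component of $N$.
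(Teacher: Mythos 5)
Your overall strategy is the one the paper follows: Theorem \ref{theorem_n_functional_perp} replaces Proposition \ref{proposition_open_stratum_perp_image}, the reduced map $N'(f,R)=p\big(iN(f+g(f,R),R)\big)$ is solved on $H_1$ by the implicit function theorem, and a graded version of Lemma \ref{lemma_open_stratum_small_projection} upgrades $p(iN)=0$ to $N\equiv 0$. However, there is a genuine gap at the single step that carries the remaining content of the corollary: the invertibility of the linearization on $H_1$. You assert that the linearized operator is the unperturbed Laplacian $i\dbar_E\partial^h_E+[\Phi,[\Phi^{*},\cdot\,]]$ and that ``its kernel among hermitian sections of $Q$ is exactly $H_0$,'' but neither claim is justified, and the first is not quite what comes out of the computation. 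The correct linearization is $2p\big(iD_u''D'f\big)$ with $D_u''=\dbar_u+\Phi_u$, i.e.\ it involves the perturbation $u=(\beta,\varphi)$, not just $(\dbar_E,\Phi)$. The kernel statement is exactly what must be proved, and it is not routine: one first shows that for the degree-zero component, $p(iD''D'f_0)=0$ forces $D''D'f_0=0$, because $(D''D'f_0)^{*}=D'D''f_0$ together with $D^2=(D''+D')^2=0$ makes $iD''D'f_0$ automatically $h$-hermitian and hence equal to its own projection; this gives $f_0\in\mathcal{H}^0$, so $f_0\in H_0\cap H_1=\{0\}$. One then runs an induction on the degree $d$ (using that $\beta,\varphi$ strictly raise degree and (iii) of Lemma \ref{lemma_F_basic_properties}) to kill the components $f_d$ one at a time; this induction is also what absorbs the discrepancy between $D_u''D'$ and $D''D'$. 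You flag this as ``the principal obstacle'' to be verified, but flagging it is not the same as closing it --- without this argument the implicit function theorem cannot be invoked.

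A secondary caution concerns the final projection lemma. Your justification (``the gauge $e^{f_R+g_R}$ is $C^0$-close to the identity, so the two metrics share a volume form'') reads as a direct citation of Lemma \ref{lemma_open_stratum_small_projection}, but that lemma compares two metrics that are uniformly close, whereas here $h'_R=(x(R),\sigma(R))^{*}h$ degenerates as $R\to 0$; only the correction $e^{f_R}$ is small. What saves the argument --- and what the paper proves separately --- is that the fixed projection $p$ onto the model bundle $Q$ restricts to a bijection from the $h'_R$-hermitian endomorphisms onto $Q$ with inverse $v_0+\sum_{d>0}v_d\mapsto v_0+\sum_{d>0}(v_d+R^{d}v_d^{*})$, uniformly bounded for $R\in[0,1]$, so that conjugating by the small gauge $e^{f_R}$ perturbs a uniformly invertible map. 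This can be patched along the lines you indicate, but as written the closeness argument does not apply to the pair $\big((e^{f_R})^{*}h'_R,\,h\big)$.
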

\begin{proof}
Consider $$N'(f,R):=p\big(iN(f,g(R),R)\big).$$
By Theorem \ref{theorem_n_functional_perp}, $\text{im}N'\perp H_0$. Differentiating at $(0,0)$ along the $H_1$-direction, we obtain
$$D_{(0,0)}N'(f)=2p\bigg(iD_{u}''D'(f+D_{(0,0)}g(f))\bigg)=2p\bigg(iD_{u}''D'f\bigg)$$
since $D_{(0,0)}g(f)\in H_0\subset \ker D'$.

We claim that $\ker D_{(0,0)}N'=\{0\}$. Suppose $f\in \ker D_{(0,0)}N'$. Then $$\big(p(iD_{u}''D' f)\big)_0=p(iD''D'f_0)=0.$$
Now, $$(D''D'f_0)^{*}=(\dbar_{E}\partial^{h}f_0)^{*}+([\Phi,[\Phi^{*},f_0]])^{*}=\partial^{h}\dbar_{E}f_0^{*}+[\Phi^{*},[\Phi,f_0^{*}]]=D'D''f_0,\text{ and }$$
$$0=(D''+D')^{2}f_0=D''D'f_0+D'D''f_0.$$
It follows that $0=p(iD''D'f_0)=iD''D'f_0$. But this implies $f_0\in H_0$, so $f_0=0$. Then, using (iii) in Lemma \ref{lemma_F_basic_properties}, one can argue inductively similar to \cite{CW19}, showing that $f_d=0$ if $f_{d'}=0$ for all $d'<d$. Hence, $f=0$ and the claim follows.

The index of differential operator $p\circ iD_{u}''D'$ is the same as that of $p\circ iD''D'=iD''D'|_{P(E,h)}$, the latter being zero. Hence, we can apply the implicit function theorem, obtaining the solution $f(R)$ for small $R$ with $f(0)=0$, so that $N'(f(R),R))\equiv 0$.

To see that $N(f(R),R)\equiv 0$, analogous to Lemma \ref{lemma_open_stratum_small_projection}, it suffices to prove the following

\begin{lemma}
There exists $\delta>0$ such that for continuous $f$ with sup-norm $||f||_{h}<\delta$, the projection map $p$ gives an isomorphism between continuous sections of $(e^{f_R})^{*}h'_R$-hermitian endomorphisms and continuous sections of $Q$.
\end{lemma}
    \begin{proof}
Let $x\in X$. At a fibre, it suffices to show that the restriction of $p$ to $i\mathfrak{u}(E_x,(e^{f_R})^{*}h'_R)$, the space of $(e^{f_R})^{*}h'_R$-hermitian endomorphisms of $E_x$, is injective.

Let $$Q'=iQ_0(h)\oplus \bigoplus_{d<0}Q_d$$
be the complementary bundle of $Q$. In other words, $Q'$ is the kernel of $p$. For $g\in Q_{x}'$, define $$g(R):= \sum_{d\leq 0}R^{-d} g_d.$$
Consider the map $\beta: Q_x\times Q_{x}'\times \R\to \text{End}(E)_x$ defined by $$\beta(f,g,R)=g(R)-e^{-2f_R}g^{*} e^{2f_R}.$$
Note that for fixed $f$ and $R$, $\beta$ is linear in $g$. For $f=0$ and $R=0$, we have $\beta(0,g,0)=g_0-g^{*}$, which vanishes if and only if $g=0$. Hence, $\beta(0,\cdot,0)$ is injective. Then, for small $R$ and $f$ with small $h$-norm, $\beta(f,\cdot,R)$ is injective. For such $f$ and positive $R$, $\beta(f,g,R)=g(R)-g(R)^{*_{(e^{f_R})^{*}h'_R}}$, so $g(R)$ is $(e^{f_R})^{*}h'_R$-hermitian if and only if $g=0$. Finally, suppose $n\in i\mathfrak{u}(E_x,(e^{f_R})^{*}h'_R)$ and $p(n)=0$. Then $n\in Q_{x}'$. Since $n=n'(R)$ for some $n'\in Q_{x}'$, $\beta(f,n',R)=0$. Thus, $n'=n=0$.

Choosing local unitary frames and using compactness of $X$, the result then follows.
    \end{proof}

Since $k$ is sufficiently large, $N(f(R),R)\in C^{0}(\Omega^{1,1}(\text{End}(E))$. Therefore, we can conculde the proof by the above lemma.
\end{proof}

\phantomsection
\addcontentsline{toc}{section}{References}
\bibliographystyle{abbrv}
\bibliography{bibliography}

@misc{Gai14,
    author = {Davide Gaiotto},
    title = {Opers and {TBA}},
    year={2014},
    note = {arXiv: 1403.6137}
}

@misc{CFW24,
      title={Conformal limits for parabolic $\text{SL}(n, \mathbb{C})$-{H}iggs bundles}, 
      author={Brian Collier and Laura Fredrickson and Richard Wentworth},
      year={2024},
      note={arXiv: 2407.16798},
}

@article{HH22,
    author = {Tamás Hausel and Nigel Hitchin},
    title = {Very stable {H}iggs bundles, equivariant multiplicity and mirror symmetry},
    journal = {Invent. math.},
    volume = {228},
    pages = {893--989},
    year = {2022}
}

@book{Dol03,
    author = {Igor Dolgachev},
    title = {Lectures on Invariant theory},
    publisher = {Cambridge University Press},
    year = {2003}
}

@misc{BS20,
    title={Polystability and the {H}itchin-{K}obayashi correspondence},
    author={Nicholas Buchdahl and Georg Schumacher},
    year={2020},
    note={arXiv:2002.03548}
}

@article{Wil08,
    title={Morse theory for the space of {H}iggs bundles},
    author={Graeme Wilkin},
    journal={Communications in Analysis and Geometry},
    year={2008},
    volume = {16},
    number = {2},
    pages = {283--332}
}

@article{Cor88,
    author = {Kevin Corlette},
    title = {Flat {G}-bundles with canonical metrics},
    journal = {J. Differential Geom.},
    year = {1988},
    volume = {28},
    number = {3},
    pages = {361--382}
}

@article{DFK+21,
    title = {From the {H}itchin section to opers through nonabelian {H}odge},
    author = {Olivia Dumitrescu and Laura Fredrickson and Georgios Kydonakis and Rafe Mazzeo and Motohico Mulase and Andrew Neitzke},
    journal = {J. Differential Geom.},
    volume ={117},
    number = {2},
    page = {223--253}
}

@article{Don87,
    author = {Simon Donaldson},
    title = {Twisted harmonic maps and the self-duality equations},
    journal = {Proc. London
Math. Soc. (3)},
    year = {1987},
    volume = {55},
    number = {1},
    pages = {127--131}
}

@article{Nit91,
    author = {N. Nitsure},
    title= {Moduli space of semistable pairs on a curve},
    journal = {Proc. London Math. Soc.},
    year = {1991},
    volume = {3},
    number = {2},
    pages = {275--300}
}

@article{Sim94,
    author = {Carlos Simpson},
    title = {Moduli of representations of the fundamental group of a smooth projective variety I},
    journal = {Publications Mathématiques de l’I.H.É.S},
    year = {1994},
    volume = {79},
    number = {1},
    pages = {47--129}
}

@article{Hit87,
    author = {Nigel Hitchin},
    title = {The self-duality equations on a {R}iemann surface},
    journal = {Proc. London Math. Soc.},
    year = {1987},
    volume = {55},
    number = {1},
    pages = {59-126}
}

@inproceedings{Sim10,
    author = {Carlos Simpson},
    title = {Iterated destabilizing modifications for vector bundles with connection},
    booktitle = {Vector bundles and complex geometry},
    year = {2010},
    volume = {522},
    series = {Contemporary Mathematics},
    publisher = {American Mathematical Society},
    pages = {183--206},
    address = {Providence RI}
}

@article{Fan22,
    title={Construction of the modulie space of {H}iggs bundles using analytic methods},
    author={Yue Fan},
    journal={Math. Res. Lett.},
    volume={29},
    number={4},
    pages={1011--1048},
    year={2022}
}

@article{KN79,
    title={The length of vectors in representation spaces},
    author={George Kempf and Linda Ness},
    journal={Springer Lecture Notes},
    volume={732},
    pages={233--244},
    year={1978}
}

@article{CW19,
    title={Conformal limits and the {B}iałynicki-{B}irula stratification of the space of $\lambda$-connections},
    author={Brian Collier and Richard Wentworth},
    journal={Adv. Math.},
    volume={350},
    number={},
    pages={1193--1225},
    year={2019}
}

@article{Sim88,
    title={Constructing Variations of {H}odge Structure Using {Y}ang-{M}ills Theory and Applications
to Uniformization},
    author={Carlos T. Simpson},
    journal={J. Amer. Math. Soc.},
    year={1988},
    volume={1},
    number={4},
    year={1988},
    pages={876--918},
    publisher={American Mathematical Society}
}

@article{Sim92,
    title={{H}iggs bundles and local systems},
    author={Carlos T. Simpson},
    journal={Publications Mathématiques de l’I.H.É.S},
    year={1992},
    volume={75},
    pages={5--95},
}

\end{document}